\newtheorem{theorem}{Theorem} 
\newtheorem{definition}[theorem]{Definition}
\newtheorem{lemma}[theorem]{Lemma}
\newtheorem{proposition}[theorem]{Proposition}
\newtheorem{remark}[theorem]{Remark}
\DeclareMathOperator{\Ker}{Ker}
\newcommand{\wt}{\mathop {\rm wt}}
\def\R{{\mathbb R}}
\def\C{{\mathbb C}}
\def\Z{{\mathbb Z}}
\def\P{{\mathbb P}}
\def\semidirect{\rtimes}
\def \ta{\tau}
\def \ta1{\tau_1}
\def \dl{\delta}
\def \g{\gamma}
\def \G{\Gamma}
\def \wt{\widetilde}
\def \E{{\mathcal E}}
\def \O{{\mathcal O}}
\newcommand\Gal[1]{{{#1}_{\operatorname{Gal}}}}
\def \prodl{\prod\limits}
\newcommand\set[1]{{\{{#1}\}}}
\def\st{{such that }}
\newcommand\begintable[1][] {{}}
\long\def\forget#1\forgotten{}
\newif\ifXY 
\def\Z{\mathbb{Z}}
\begin{document}

\renewcommand{\subjclassname}{%

      \textup{2000} Mathematics Subject Classification}

\date{\today}

\title[Fund. Groups of Galois Covers of Small Degree]{Classification of Fundamental Groups of Galois Covers\\
 of Surfaces of Small Degree Degenerating\\ to Nice Plane Arrangements}

\author[Amram, Lehman, Shwartz, Teicher]{Meirav Amram$^{1}$, Rebecca Lehman, Robert Shwartz, Mina Teicher}

\stepcounter{footnote}

\footnotetext{Partially supported by the Emmy Noether Research Institute for Mathematics (center of the Minerva
Foundation of Germany), the Excellency Center "Group Theoretic Methods in the Study of Algebraic Varieties" of
the Israel Science Foundation, and EAGER (EU network, HPRN-CT-2009-00099).}

\address{Meirav Amram, Rebecca Lehman, Robert Shwartz, Mina Teicher, Department of Mathematics,
Bar-Ilan University, Ramat-Gan 52900, Israel}

\email{meirav/shwartr1/teicher@macs.biu.ac.il, rclehman@alum.mit.edu}

\begin{abstract}
Let $X$ be a surface of degree $n$, projected onto $\mathbb{CP}^2$. The surface has a natural Galois cover with
Galois group $S_n.$  It is possible to determine the fundamental group of a Galois cover from that of the
complement of the branch curve of $X.$ In this paper we survey the fundamental groups of Galois covers of all
surfaces of small degree $n \leq 4$, that degenerate to a nice plane arrangement, namely a union of $n$ planes
such that no three planes meet in a line.  We include the already classical examples of the quadric, the
Hirzebruch and the Veronese surfaces and the degree $4$ embedding of $\mathbb{CP}^1 \times \mathbb{CP}^1,$ and
also add new computations for the remaining cases: the cubic embedding of the Hirzebruch surface $F_1$, the
Cayley cubic (or a smooth surface in the same family), for a quartic surface that degenerates to the union of a
triple point and a plane not through the triple point, and for a quartic $4$-point. In an appendix, we also
include the degree $8$ surface $\mathbb{CP}^1\times \mathbb{CP}^1$ embedded by the $(2,2)$ embedding, and the
degree $2n$ surface embedded by the $(1,n)$ embedding, in order to complete the classification of all embeddings
of $\mathbb{CP}^1 \times \mathbb{CP}^1,$ which was begun in \cite{15}.
\end{abstract}

\keywords{Singularities, Coverings, Fundamental Groups, Surfaces, Mapping Class Group \\ MSC Classification:
14B05, 14E20, 14H30, 14Q05, 14Q10}

\maketitle

\section{Introduction}\label{outline}

\subsection{Background}

Algebraic surfaces are classified by discrete and continuous invariants. Fixing the discrete invariants gives a
family of algebraic surfaces parameterized by algebraic variety called the moduli space. All surfaces in the
same moduli space have the same homotopy type and therefore the same fundamental group. So, fundamental groups
are discrete invariants of the surfaces and form a central tool in their classification. We have no algorithm to
compute the fundamental group of a given projective algebraic surface $X,$ but we can cover $X$ by a surface
$X_{Gal}$ with a computable fundamental group.  The surface $X_{Gal}$ is the Galois cover of $X,$ and its
fundamental group is a crucial invariant of the surface $X.$  It is connected with the fundamental group of the
complement of the branch curve of a generic projection of the surface $X.$

One common solution to this problem is to degenerate $X$ to a union of planes, whose branch curve is a line
arrangement in the plane.  The fundamental groups of line arrangements are easily computable and can be
exploited to obtain information on the original branch curve and on the original surface. The idea of using
degenerations for these purposes appears in \cite{3}, \cite{10}, \cite{12}, \cite{15} and \cite{18}.
Degenerations of $K3$ surfaces were constructed in \cite{10}, \cite{11} and \cite{12}, and are called {\em
pillow degenerations}.

In recent years, braid groups have been becoming more and more popular in different branches of mathematics, such as
the theory of Jones polynomials (which is a current interest of theoretical physicists).  Former studies show
that some nontrivial properties of braid groups are intimately connected with the existence of nontrivial
geometrical objects of complex geometry and algebraic surfaces. Thus, our study of algebraic surfaces and their
branch curves gives new insight in the field of braid groups and vice versa.

The applications of the braid group technique to the study of algebraic curves in general and to the topology of complements of
curves in particular, started with  Artin in \cite{1} and \cite{2}; see also Chisini \cite{9} and Van Kampen
\cite{vk}. The braid monodromy technique is due to ideas of Chisini, Enriques, Zariski and van Kampen in the
30's (see \cite{13}, \cite{vk}, \cite{Z1}, \cite{zar}, \cite{Z3}).  It was revived by Moishezon in the late
1970's,and was first presented in a complete form by Moishezon in \cite{Mo} and \cite{Mo0}, and
Moishezon-Teicher in \cite{13}, \cite{16} and \cite{17}. Many examples of computations of braid monodromy have
been executed, see for example in \cite{3}, \cite{5}, \cite{17}, \cite{MoTe10}. Also, in \cite{dennis} one can
find a description of computations of braid monodromy and the fundamental group $\pi_1(\C\P^2-S)$ of the
complement of the branch curve of a surface.

In \cite{dennis}, the authors also considered the branch curve of a projection of a non-prime $K3$ surface. But
they considered quotients of $\pi_1(\C\P^2-S),$ where $S$ denotes the branch curve, by a subgroup of commutators
(commutators of geometric generators which are mapped to disjoint transpositions by the geometric monodromy
representation, see Definition 2.2 in that paper). Their motivation came from the theory of symplectic manifolds
and families of projections where the branch curves acquire and may lose pairs of transverse double points with
opposite orientations; creating a pair of double points adds a commutation relation. The quotient there is the
largest possible quotient of $\pi_1(\C\P^2-S)$.

Several other small cases have been considered individually: in \cite{Cayley}, the authors compute the
fundamental group of the complement of the branch curve of the Cayley cubic surface, and in \cite{Ogata}, they
compute it for the $(1,1)$ embedding of the Hirzebruch surface $F_2.$  (\cite{Robb} also computes the
fundamental group of the Galois cover of complete intersections.)   In \cite{fr1} and \cite{fr2}, the authors
compute the fundamental group of the complement of the branch curve for the second Hirzebruch surface and for
the product of a projective line and a torus, respectively.

Fundamental groups of Galois covers were first considered in \cite{3} \cite{pil}, \cite{8} and \cite{TxT}.  In
\cite{pil}, we encounter new types of singularities, namely $3$-points, local intersection points of three
distinct lines, which had not been handled before, and whose analysis is necessary for the precise computations
of the braid monodromy. Moreover, since the monodromies related to $6$-points (first discussed in \cite{17} and
\cite{19}) are quite hard to follow, \cite{pil} presents them in a precise way algebraically, accompanied by
figures illustrating the computations. \cite{8} deals with the Hirzebruch surface $F_1,$ \cite{MRT} and
\cite{FRT} deal with covers of Hirzebruch surfaces, while \cite{TxT} deals with the product of two tori, whose
branch curve was first investigated in \cite{5}. Robb \cite{Robb} computed the fundamental groups for complete
intersection surfaces. The $4$-point appears first in a local computation in \cite{17}; we show it here in the
context of an explicit algebraic surface.

The goals of this paper are twofold: first, to survey the fundamental groups of the complements of the branch
curves and of the Galois covers of all degenerations of surfaces of low degrees (less than 5), and second, to
complete the classification of Galois covers of $\C\P^1 \times \C\P^1,$ which was begun in \cite{Mo}, \cite{Mo0}
and \cite{MoTe6}.

\subsection{Method}
In order to compute the possible fundamental group of the Galois cover of a surface of small degree with at
worst isolated singularities and with a nice degeneration, we degenerate the surface to a union of planes, and
then apply the regeneration process. The algebra is independent of the specific choice of surface, as it is
determined entirely combinatorially. Therefore, to construct the set of all possible such fundamental groups of
Galois covers of surfaces of degrees $2,$ $3$ and $4,$ we construct combinatorial representations of each of the
possible admissible arrangements of two, three or four planes, i.e., those in which no two planes meet in a
line, and we verify that each of them in fact corresponds to a degeneration of some algebraic surface.  Note that
there are many possible plane arrangements corresponding to each diagram; however, the fundamental groups of the
complement of the branch curve and of the Galois cover are determined combinatorially, and are thus independent of the specific arrangement.  We then
calculate the braid monodromy factorization and the fundamental groups using the method of Moishezon-Teicher. We
present the computations in some detail for the convenience of the reader, and so as to make all the examples
completely explicit.  The computations proceed in three steps.

First, we compute the braid monodromy factorization of the branch curve, as defined in \cite[Prop. VI.2.1]{16}.

Consider the following setting (Figure \ref{setup}). Let $S$ be an algebraic curve in $\C^2$, with $p = \deg(S)$.
Let $\pi: \C^2 \rightarrow \C$ be a generic projection onto the first coordinate. Define the fiber $K(x) = \{y
\mid (x,y) \in S\}$ in $S$ over a fixed point $x$, projected to the $y$-axis. Define $N = \{x \mid \# K(x) < p
\}$ and $M' = \{ s \in S \mid \pi_{\mid s} \mbox{ is not \'{e}tale at } s \}$; note that $\pi (M') = N$. Let
$\set{A_j}^q_{j=1}$ be the set of points of $M'$ and $N = \set{x_j}^q_{j=1}$ their projections onto the $x$-axis.
Recall that $\pi$ is generic, so we assume that $\# (\pi^{-1}(x) \cap M') =1$ for every $x \in N$. Let $E$
(resp. $D$) be a closed disk on the $x$-axis (resp. the $y$-axis), \st $M' \subset E \times D$ and $N \subset
\mbox{Int}(E)$. We choose $u \in \partial E$ a real point far enough from the set $N$, so that $x << u$ for
every $x \in N$. Define $\C_u = \pi^{-1}(u)$ and number the points of $K=\C_u\cap S$ as $\{1 , \dots , p\}$.

\begin{figure}[h]
\begin{minipage}{\textwidth}
\begin{center}
\epsfbox{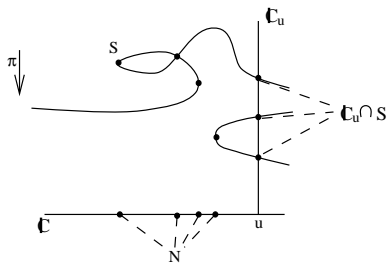}
\end{center}
\end{minipage}
\caption{General setting}\label{setup}
\end{figure}

We now construct a g-base for the fundamental group $\pi_1(E - N, u)$. Take a set of paths $\set{\g_j}^q_{j=1}$
which connect $u$ with the points $\set{x_j}^q_{j=1}$ of $N$. Now encircle each $x_j$ with a small oriented
counterclockwise circle $c_j$. Denote the path segment from $u$ to the boundary of this circle by $\g'_j$. We
define an element (a loop) in the g-base as $\delta_j = {\g'}_j c_j {\g'}^{-1}_j$. Let $B_p[D,K]$ be the braid
group, and let $H_1, \dots , H_{p-1}$ be its frame (for complete definitions, see \cite[Section III.2]{16}).
The braid monodromy of $S$ \cite{2} is a map $\varphi: \pi_1(E - N, u) \rightarrow B_p[D,K]$ defined as follows:
every loop in $E - N$ starting at $u$ has liftings to a system of $p$ paths in $(E - N) \times D$ starting at
each point of $K = 1,\ldots,p$. Projecting them to $D$ we obtain $p$ paths in $D$ defining a motion $\{1(t),
\dots , p(t)\}$ (for $0 \leq t \leq 1$) of $p$ points in $D$ starting and ending at $K$. This motion defines a
braid in $B_p [D , K]$. By the Artin Theorem \cite{17}, for $j=1, \dots, q$, there exists a half-twist $Z_j \in
B_p[D , K]$ and $\epsilon_j \in \Z$, \st $\varphi(\dl _j) = Z_j^{\epsilon_j}$, where  $Z_j$ is a half-twist and
$\epsilon_j = 1,2$ or $3$ (for an ordinary branch point, a node, or a cusp respectively). We now explain how to
describe the $Z_j$.

First, we recall the definition of an almost real curve from \cite{17}.

\begin{definition}
A curve $S$ is called an \emph{almost real curve} if
\begin{enumerate}
\item $N \subseteq E \cap \R$, \item $N \subset E - \partial E$, \item $\forall x \in {E \cap \R -N}, \# {(K
\cap \R)(x)} \ge p-1$, \item $\forall x \in N, \# {\pi^{-1}(x) \cap M'} = 1$, \item The singularities can be
\begin{enumerate}
\item a branch point, topologically locally equivalent to $y^2+x=0$ or $y^2-x=0$, \item a tacnode
(locally a line tangent to a conic) \item a local intersection of $m$ smooth branches, transversal to each other.
\end{enumerate}
\end{enumerate}
\end{definition}

We compute the braid monodromy around each singularity in $S$. Let $A_j$ be a singularity in $S$ and denote by
$x_j$ its projection by $\pi$ to the $x$-axis. We choose a point $x'_j$ next to $x_j$, \st \ $\pi^{-1}(x'_j)$ is
a typical fiber. If $A_j$ is (b), (c) or (d), then $x'_j$ is on the right side of $x_j$. If $A_j$ is (a), then
$x'_j$ is on the  left side of $x_j$ (the typical fiber in case (a), which is on the left side of this
singularity, intersects the conic in two real points). We encircle $A_j$ with a very small circle in such a way
that the typical fiber $\pi^{-1}(x'_j)$ intersects the circle in two points, say $a ,b$. We fix a skeleton
$\xi_{x'_j}$ which connects $a$ and $b$, and denote it as $<a,b>$. The Lefschetz diffeomorphism $\Psi$
\cite[Subsection 1.9.5]{3} allows us to get a resulting skeleton $(\xi_{x'_j}) \Psi$ in the typical fiber
$\C_u$. This one defines a motion of its two endpoints. This motion induces a half-twist $Z_j = \Delta <
(\xi_{x'_j}) \Psi>$. As above, $\varphi(\delta_j) = \Delta < (\xi_{x'_j}) \Psi>^{\epsilon_j}$. The braid
monodromy factorization associated to $S$ is $\Delta^2_{p} = \prodl^q_{j=1} \varphi(\delta_j)$. The degree of
the factorization (the sum of the $\epsilon_j$) is $p \times (p-1)$.

It is difficult to compute the braid monodromy factorization of a general branch curve directly, but it can be done indirectly by degeneration
\ref{Degeneration-Regeneration}.

\begin{figure}[h]\label{Degeneration-Regeneration}
\begin{minipage}{\textwidth}
\begin{center}
\epsfbox {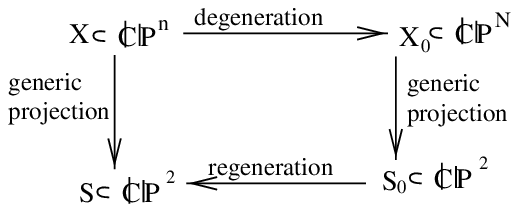}
\end{center}
\end{minipage}
\caption{}
\end{figure}
To do this, we
can degenerate the surface to a union of planes, whose branch curve is a line arrangement in the plane, whose
complement has a particularly simple fundamental group. The braid monodromy algorithm of \cite{17} computes the
braid monodromy factorization for the line arrangement. We can then regenerate the braid monodromy factorization
via the regeneration rules of \cite[p. 335-337]{19} to obtain the braid monodromy factorization of the branch
curve.

Note that geometrically, this regeneration of the braid monodromy factorization is equivalent to a \emph{local}
regeneration of the branch curve itself, via the regeneration lemmas in \cite[p.38]{17}:
\begin{enumerate}
\item regeneration of a branch point to two branch points; \item regeneration of a node to two or four nodes;
\item regeneration of a tangency point to three cusps.
\end{enumerate}

We note that a braid $Z^2_{i \; j}$ which is related to a node is regenerated to $Z^2_{i, j \;j'}$, which is a
product of two braids $Z^2_{i \; j}, Z^2_{i \;j'}$. A braid $Z^3_{i \; j}$ related to a cusp is regenerated to a
braid $Z^3_{i, j\;j'}$, which is  a product of three braids, as follows: $Z^3_{i\; j}$, $(Z^3_{i\; j})^{Z_{j\;
j'}}$, $(Z^3_{i \; j})^{Z^{-1}_{j \; j'}}$.

Throughout this paper we represent braids $B$ pictorially by paths $P$, such that $B$ is the half-twist with
respect to the path $P$, that is, consider a map $f$ from a closed neighborhood $V$ of $P$ to a disc $U$, taking
$P$ to the line segment $[-1,1];$ the braid $B$ is then defined by the conjugation of the 180 degree rotation in
$U$ by $f$ on $V,$ and the identity elsewhere. In the case of the products that arise from nodes and cusps, by
combinations of the individual paths representing the half twists along those paths. (For simplicity, in the
larger graphs, sometimes we draw one path representing all three in the portion where they overlap.  Any dotted
or dashed segment that connects to a path rather than a vertex is considered to continue along that path.)

After computing the braid monodromy factorization, the second step is to compute the fundamental group of the complement
of the branch curve. By the Van Kampen theorem \cite{vk}, there is a ``good" geometric base $\{\G_{j}\}$ of
$\pi_{1}(\C_{x_{0}} - S \cap \C_{x_{0}} , *),$ where $\C_{x_0}$ is the fiber of the projection
$\pi|_{\text{aff}}$ above $x_0$, such that the group $\pi_{1}(\C^2 - S, *)$ is generated by the images of $\{
\G_{j} \}$ in $\pi_{1}(\C^2 - S, *)$ with the relations $\varphi(\delta_{i}) \hspace{0.1cm} \G_{j} = \G_{j}
\hspace{0.1cm} \forall i,j$. Recall that
$$
\pi_{1}(\C \P^2 - \bar{S}) \simeq \pi_{1}(\C^2 - S) / \langle \prod_j{\G_j}\rangle.
$$

Recall that the branch curve $S$ of a smooth surface $X$ contains branch points of the projection (where the
curve is locally defined by the equation $y=x^2$), nodes (locally $y^2=x^2$) and cusps (locally $y^2 = x^3$). If
the surface has isolated nodal/cuspidal singularities, then the branch curve is of the same form. Denote by $a$
and $b$ the two branches of the real part of $S$ in a neighborhood of such a singular point, and let $\G_{a},
\G_{b}$ be two non-intersecting loops in $\pi_{1}(\C_{x_{0}} - S \cap \C_{x_{0}}, *)$ around the intersection
points of the branches with the fiber $\C_{x_{0}}$ (constructed by cutting each of the paths and creating two
loops, which proceed along the two parts and encircle $a$ and $b$); see \cite[Proposition-Example VI.1.1]{16}.
Then by the van Kampen Theorem, we have the relations $\langle \G_{a},\G_{b} \rangle = \G_{a} \G_{b} \G_{a}
\G_{b}^{-1} \G_{a}^{-1} \G_{b}^{-1} = 1$ for a cusp, $[\G_{a},\G_{b}] = \G_{a} \G_{b}\G_{a}^{-1} \G_{b}^{-1} =
1$ for a node, and $\G_a = \G_b$ for a branch point.

These relations generate the group completely in the affine case. In the projective case, we have the additional
"projective" relation $\prod \G_j =1.$

The third step is to use the theorem of Moishezon-Teicher, \cite{15}, that there is an exact sequence
\begin{equation} \label{M-T} 0 \rightarrow K \rightarrow \pi_1(\C\P^2 - \bar{S}) \rightarrow S_n \rightarrow 0,
\end{equation} where the second map takes the generators $\Gamma_i$ of the fundamental group to the transpositions in $S_n$ according to the lexicographic order, and the fundamental group $\pi_1(\Gal{X})$ is the quotient of $K$ by the relations $<\G_i^2>.$
We apply this exact sequence to obtain a presentation of the fundamental group of the Galois cover, and simplify
the relations to produce a canonical presentation and identify the group, using various new group theoretic
methods for each case.

\subsection{Contents}
The rest of this paper is organized as follows:  Section \ref{small} is a complete survey of the fundamental
groups of the Galois covers of all the surfaces of degrees less than $5$ that degenerate to a "nice" planar
arrangement, i.e., one in which no three planes meet in a line. In subsection \ref{Quadric}, we consider the unique smooth
quadric surface that degenerates to two planes. In subsection \ref{Cubics}, we consider the two possible cubics:
in \ref{F1}, we analyze the (1,1) embedding of the Hirzebruch surface $F_1$ as a smooth cubic. In \ref{Cayley},
we recall the degeneration of the Cayley cubic, which is singular, but note that there are also smooth surfaces
that share that degeneration, and analyze the fundamental group of its Galois cover.  In subsection
\ref{Quartics}, we consider the five possible quartics: in \ref{F2}, we recall the (1,1) embedding of the
Hirzebruch surface $F_2.$ In \ref{V4}, we recall the embedding of the Veronese surface $V_2$ and provide a more
explicit analysis of its fundamental groups, and in \ref{F0-12} we analyze the $(1,2)$ embedding of the
Hirzebruch surface $F_0,$ or $\C\P^1\times\C\P^1.$ In \ref{Cayley+} we analyze the new case of a quartic surface
degenerating to the union of a plane and the Cayley cubic degeneration, and in \ref{4-pt} we analyze the case of
a quartic degenerating to a $4$-point.

In the appendix, Section \ref{App}, we analyze for completeness those cases of $\C\P^1 \times \C\P^1$ not
covered by the Moishezon-Teicher theorem \cite[p.642]{15}, namely the $(2,2)$ embedding and the $(1,n)$
embedding for all positive integers $n.$

\section{Small Degree Cases}\label{small}

Throughout this paper, we use the standard triangulation representation in which triangles denote planes,
internal edges denote lines of intersection (we do not consider the boundary edges as they do not define well
defined lines) and points of intersection of the internal edges denote points of intersection of the
corresponding planes, and every arrangement is assumed to be embedded in the smallest dimensional projective
space.

It is clear that every plane arrangement can be represented by a triangulation as long as no three planes meet
in a line and no plane meets more than three other planes.
 In degrees less than $5,$ the second condition is vacuous.  For the triangulation to represent a meaningful planar degeneration, it
 is necessary that if exactly two triangles meet at a point, they must meet along an edge, and if three or more triangles meet at a point, the dual graph
 of the triangulation must be connected. The reason is that the branch curve
 of an irreducible surface (and hence of a smooth surface or a surface with only isolated non-removable singularities) is connected.
 Hence the branch curve of the degeneration must also be connected.
 Moreover, the triangulation is planar
 since we can consider the edge graph of the triangulation, and by Kuratowski's theorem, every non-planar graph contains
 a subgraph homeomorphic to the complete graph $K_5$ or the complete bipartite graph $K_{3,3},$
 neither of which is possible for triangulations of degree less than $10.$  Conversely, any
 triangulation determines a plane arrangement in a projective space of sufficiently high degree.

 We therefore consider planar triangulations such that the dual graph is connected.

The plane arrangement is not unique, but its combinatorial invariants, and therefore, in particular, its braid monodromy factorization and the fundamental group of the complement of the branch curve, are well defined.  It is therefore enough to consider each possible plane triangulation and to show that it actually corresponds to a degeneration of some smooth surface.

\subsection{Quadric}\label{Quadric}

The smooth quadric surface degenerates to two planes, see Figure \ref{quad-degeneration}.

\begin{figure}[ht]
\epsfxsize=1.5cm 
\begin{minipage}{\textwidth}
\begin{center}
\epsfbox{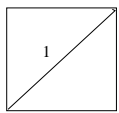}
\end{center}
\end{minipage}
\caption{Degeneration of the quadric}\label{quad-degeneration}
\end{figure}

The branch curve is a conic, which is smooth.  There is only one braid relation, which is the half twist given
in Figure \ref{quad-braid}.
\begin{figure}[ht]
\epsfxsize=1.5cm 
\begin{minipage}{\textwidth}
\begin{center}
\epsfbox{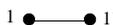}
\end{center}
\end{minipage}
\caption{The half-twist $Z_{1 \; 1'}$}\label{quad-braid}
\end{figure}
 The fundamental group of the complement has two generators $\G_1$ and
$\G_1'.$ The projective relation is $\G_1\G_1'=e$.  Hence $\G_1^2=e$. This group is thus $S_2,$ so the map
$\rho$ in Exact Sequence (\ref{M-T}) is the identity. Its kernel is therefore trivial.  Note that every degree
$2$ cover is Galois.

\subsection{Cubics}\label{Cubics}
\begin{theorem}
There are two possible "nice" cubic degenerations (i.e., degenerations to line arrangements such that no three
planes meet in a line), those shown in Figures \ref{trap3} and \ref{ozen-haman}.
\end{theorem}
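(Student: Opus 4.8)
The plan is to reduce the statement to a purely combinatorial enumeration of planar triangulations with three faces, and then to match the two surviving configurations to the two figures. A cubic degenerates to a union of three planes, so in the triangulation model of Section~\ref{small} it is represented by three triangles, and the only data to be classified is how these triangles are glued. First I would record how the model simplifies in degree $3$. Because each internal edge is shared by exactly two triangles, the model automatically encodes only arrangements in which no line lies on three planes; thus the ``nice'' hypothesis is built in and imposes no extra combinatorial restriction. Likewise the condition that no plane meet more than three others is vacuous for $n\le 4$, since there are only two other planes. Hence the single operative requirement is that the dual graph be connected, which is forced because the branch curve of an irreducible surface is connected.

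Next I would enumerate the possible dual graphs. Two distinct planes meet in at most one line, so each pair of triangles shares at most one edge, and the dual graph is a simple graph on three vertices. A connected simple graph on three vertices is, up to isomorphism, either the path or the triangle, so there are exactly two combinatorial types. In the path case, two pairs of triangles share an edge while the third pair does not. Since the two shared edges are both edges of the middle triangle, they meet at a common vertex, which therefore lies on all three planes; realizing this as a planar triangulation gives the strip/trapezoid of Figure~\ref{trap3}, in which the fan of three triangles about that triple point does not close up. In the triangle case all three pairs share an edge; counting vertices shows the only planar realization glues the three triangles around a common central vertex, producing the interior triple point of Figure~\ref{ozen-haman}, where the fan closes up completely.

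I would then rule out any configuration hidden in the point incidences. By the connectivity convention of Section~\ref{small}, two triangles meeting only at a point, with no shared edge, are not permitted, so every adjacency is recorded by a shared edge and the dual graph captures the full gluing. This excludes any possibility beyond the path and the triangle, and a short check via the Euler relation shows that each of these two types has an essentially unique planar realization (five vertices for the path, four for the triangle). Finally, both diagrams must be exhibited as degenerations of genuine algebraic cubics, and I would defer this to the explicit constructions given later: the path diagram is realized by the $(1,1)$ embedding of the Hirzebruch surface $F_1$ in Subsection~\ref{F1}, and the triangle diagram by the Cayley cubic degeneration in Subsection~\ref{Cayley}.

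The main obstacle is not the graph enumeration, which is elementary, but the verification that each abstract diagram is actually realized by a degeneration of a smooth (or mildly singular) cubic surface, with an admissible plane arrangement. This realizability is exactly what upgrades the combinatorial count to a genuine classification of cubic degenerations, and it is the content supplied by the two subsections cited above.
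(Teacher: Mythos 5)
Your proposal is correct and follows essentially the same route as the paper: the paper's proof likewise enumerates the ways of gluing three triangles along edges, and its "simplest connected arrangement, then one extra gluing, then no further gluing is possible" is exactly your path-versus-cycle dichotomy for the dual graph. Your write-up is merely more systematic — formalizing the count via dual graphs, checking uniqueness of the planar realizations, and explicitly deferring realizability to the $F_1$ and Cayley cubic subsections, which the paper treats as part of the general framework of Section 2 rather than inside the proof itself.
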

\begin{proof}
We construct the plane arrangements by gluing triangles together.  The simplest connected arrangement is that
shown in Figure \ref{trap3}.  By one extra gluing we obtain Figure \ref{ozen-haman}.  It is not possible to glue
the triangles any further without either gluing three triangles in one edge or identifying the two endpoints of
some edge, both of which are forbidden.
\end{proof}

\subsubsection{The Hirzebruch surface $F_1$}\label{F1}
 The Hirzebruch surface $F_1,$ the ruled surface defined by $\E=\O \oplus \O(-1)$ on $\mathbb{P}^1,$
is embedded as a smooth cubic in $\mathbb{P}^4$ by the (1,1) embedding, i.e., by the linear system $|\ell_1 +
\ell_2|,$ where $\ell_1$ is the $(-1)$-curve and $\ell_2$ is the fibre. This surface degenerates to a union of
three planes, as depicted in Figure \ref{trap3}. The branch curve $C_0$ is a line arrangement consisting of two
intersecting lines. Regenerating it, we obtain the conic (1, 1') and the tangent line (2). When the line
regenerates, the tangency regenerates into three cusps.
\begin{figure}[h]
\epsfysize=2cm 
\begin{minipage}{\textwidth}
\begin{center}
\epsfbox{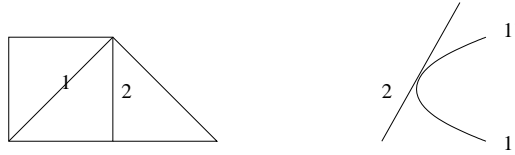}
\end{center}
\end{minipage}
\caption{Degeneration and Regeneration of the Hirzebruch surface $F_1$}\label{trap3}
\end{figure}

The braid monodromy factors are thus given in Figure \ref{trap3-braids}.  Here and throughout this paper, we use the group theoretic convention of denoting by $a^b$ the conjugation $b^{-1}ab.$
\begin{figure}[ht]
\epsfxsize=3cm 
\begin{minipage}{\textwidth}
\begin{center}
\epsfbox{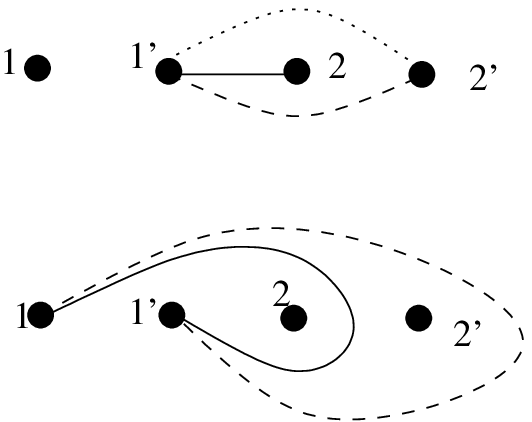}
\end{center}
\end{minipage}
\caption{$Z^3_{1', 2\;2'}$, $\left(Z_{1\;1'}\right)^{Z^2_{1', 2\;2'}}$}\label{trap3-braids}
\end{figure}

\begin{theorem}
The fundamental group of the Galois cover of the Hirzebruch surface $F_1$ is trivial.
\end{theorem}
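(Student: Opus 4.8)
The plan is to follow the three-step method set out in Section~\ref{small}: read off the braid monodromy factorization, apply the van Kampen theorem to present $\pi_1(\C\P^2-\bar S)$, and then feed the result into the exact sequence (\ref{M-T}) to extract $\pi_1(\Gal{X})$, where $X=F_1$. The generic fiber meets the branch curve in the four points $1,1',2,2'$, so I take generators $\Gamma_1,\Gamma_{1'},\Gamma_2,\Gamma_{2'}$. From the two factors displayed in Figure~\ref{trap3-braids} I would extract the relevant relations. The branch-point factor $(Z_{1\,1'})^{Z^2_{1',2\,2'}}$ yields, after applying the conjugating braid to the base relation $\Gamma_1=\Gamma_{1'}$, an identification of $\Gamma_1$ with a conjugate of $\Gamma_{1'}$ by a word in $\Gamma_2,\Gamma_{2'}$. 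The cusp factor $Z^3_{1',2\,2'}$, which by the regeneration rule is the product $Z^3_{1'\,2}\,(Z^3_{1'\,2})^{Z_{2\,2'}}\,(Z^3_{1'\,2})^{Z_{2\,2'}^{-1}}$, yields three triple (cusp) relations: $\langle\Gamma_{1'},\Gamma_2\rangle=1$ together with its two conjugates by $Z_{2\,2'}^{\pm1}$. To these I adjoin the projective relation $\Gamma_1\Gamma_{1'}\Gamma_2\Gamma_{2'}=1$.

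Next I would pass to the quotient $\widetilde G=\pi_1(\C\P^2-\bar S)/\langle\Gamma_j^2\rangle$ that appears in (\ref{M-T}), in which every generator becomes an involution. Under the map to $S_3$ the pair $\Gamma_1,\Gamma_{1'}$ goes to the transposition $(1\,2)$ and the pair $\Gamma_2,\Gamma_{2'}$ to $(2\,3)$; these two transpositions generate $S_3$, so the map is onto. It therefore suffices to prove that $\widetilde G$ has order $6$, since the kernel of $\widetilde G\to S_3$ is exactly $K/\langle\Gamma_i^2\rangle=\pi_1(\Gal{X})$. Because the generators are now involutions, each cusp relation $\langle a,b\rangle=1$ collapses to $(ab)^3=1$ (indeed $aba=bab$ with $a^2=b^2=1$ gives $(ab)^3=(aba)(bab)=(bab)(bab)=1$).

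I would then simplify the presentation to show $|\widetilde G|\le 6$. The branch-point relation lets me eliminate $\Gamma_1$, so $\widetilde G$ is generated by $\Gamma_{1'},\Gamma_2,\Gamma_{2'}$. Substituting this identification into the projective relation turns it into $\Gamma_2\Gamma_{2'}=1$, i.e.\ $\Gamma_{2'}=\Gamma_2$ since $\Gamma_2$ is an involution; write $b=\Gamma_2=\Gamma_{2'}$ and $a=\Gamma_{1'}$. Once $\Gamma_2$ and $\Gamma_{2'}$ are identified, the three cusp relations all reduce to the single relation $(ab)^3=1$. Thus $\widetilde G$ is generated by two involutions $a,b$ subject to $(ab)^3=1$, so it is a quotient of $\langle a,b\mid a^2,b^2,(ab)^3\rangle\cong S_3$; combined with the surjection onto $S_3$ this forces $\widetilde G\cong S_3$. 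By exactness of (\ref{M-T}), the kernel $\pi_1(\Gal{X})$ is then trivial, as claimed.

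The main obstacle is the bookkeeping of the conjugations rather than any deep structural difficulty. Concretely, I must compute the exact van Kampen relation attached to the conjugated branch-point factor $(Z_{1\,1'})^{Z^2_{1',2\,2'}}$ and to the two conjugated cusp factors $(Z^3_{1'\,2})^{Z_{2\,2'}^{\pm 1}}$, and then verify that, once the squares $\Gamma_i^2$ are killed, all of these conjugating words can indeed be absorbed so that the four generators genuinely collapse to the two involutions generating $S_3$. If some conjugation failed to trivialize, $\widetilde G$ could \emph{a priori} be strictly larger than $S_3$ and the kernel nontrivial; checking that the relations leave no room beyond $S_3$ is the heart of the argument.
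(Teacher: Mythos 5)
Your strategy is exactly the paper's: a van Kampen presentation of $\pi_1(\C\P^2-\bar{S})$, the sequence (\ref{M-T}), passage to the quotient $\widetilde{G}$ by the $\Gamma_i^2$, identification $\widetilde{G}\cong S_3$, and the conclusion that $\pi_1(\Gal{X})=K/\langle\Gamma_i^2\rangle$ is trivial; that last piece of bookkeeping is correct. The gap is the one you flag yourself, and it is genuine rather than routine: with only the factors you list, whether the projective relation ``turns into $\Gamma_2\Gamma_{2'}=1$'' depends on the precise conjugation produced by $(Z_{1\,1'})^{Z^2_{1',2\,2'}}$, which you never compute, and it is not a formal substitution in any case. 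If that relation reads $\Gamma_1=u\Gamma_{1'}u^{-1}$ with $u=\Gamma_{2'}\Gamma_2$, then combined with the projective relation $\Gamma_{2'}\Gamma_2\Gamma_{1'}\Gamma_1=e$ (the paper's ordering) one gets $u\Gamma_{1'}u\Gamma_{1'}=u$, hence $\Gamma_{1'}u\Gamma_{1'}=e$, hence $u=\Gamma_{1'}^{-2}=e$ in $\widetilde{G}$; this short computation, which crucially uses $\Gamma_{1'}^2=e$, does close your gap. But if the relation reads $\Gamma_1=u^{-1}\Gamma_{1'}u$, one only gets $\Gamma_{1'}u\Gamma_{1'}=u^2$, and your claim is then \emph{not} derivable from your relation set: the assignment $\Gamma_{1'}\mapsto(1\,2)$, $\Gamma_2\mapsto(2\,3)$, $\Gamma_{2'}\mapsto(1\,3)$, $\Gamma_1\mapsto(2\,3)$ into $S_3$ satisfies the squares, all three cusp relations, this branch-point relation and the projective relation, yet $\Gamma_2\Gamma_{2'}\neq e$ and $\Gamma_1\neq\Gamma_{1'}$. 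So under one of the possible conventions your presentation admits a model in which the four generators do not collapse, and no manipulation will finish the proof from it.

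The structural reason, and the clean repair, is that your list of braid factors is incomplete. The two factors of Figure \ref{trap3-braids} have total degree $9+1=10$, while $\deg\Delta_4^2=12$; since the branch curve is a three-cuspidal quartic (it has no nodes), the factorization must contain two further branch-point factors coming from the conic $(2,2')$ --- compare the factors $Z_{2\,2'}$, $Z_{1\,1'}$, $Z_{3\,3'}$ in the Cayley factorization (\ref{del1}). Van Kampen applied to these yields the identification $\Gamma_2=\Gamma_{2'}$ directly, which is precisely how the relation $\Gamma_2=\Gamma_2'$ enters the paper's list alongside $\Gamma_1=\Gamma_1'$, (\ref{Hfin1}) and (\ref{Hfin2}). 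With both identifications in hand, the quotient by the squares is visibly $\left\langle \Gamma_1,\Gamma_2 \mid \Gamma_1^2,\ \Gamma_2^2,\ \langle\Gamma_1,\Gamma_2\rangle=e\right\rangle\cong S_3$, the kernel $K$ of the map to $S_3$ is the normal closure of $\Gamma_1^2,\Gamma_2^2$, and $\pi_1(\Gal{X})$ is trivial with no conjugation analysis at all; that is the paper's proof. To complete yours, either import the missing branch-point factors or pin down the orientation conventions and run the computation above.
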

\begin{proof}
By the Van Kampen theorem, the fundamental group $\pi_1(\C^2 \setminus S)$ is generated by the four generators
$\G_1$, $\G_1',$ $\G_2$ and $\G_2'$, subject to the relations
\begin{tiny}
\begin{eqnarray}
{}\G_1 &=& \G_1',\\
{}\G_2 &=& \G_2',\\
{}\langle\G_{1}, \G_{2}\rangle & = & e, \label{Hfin1}\\
{}\G_1^{-2} & = & \G_2^2. \label{Hfin2}
\end{eqnarray}
\end{tiny}

The map $\pi_1(\C^2 \setminus S) \rightarrow S_3$ is given by \ \ $\G_1  \mapsto  s_1$ \ and \ $\G_2  \mapsto
s_2$. Thus  the kernel $K$ is generated by $\G_1^2$ and $\G_2^2.$  Hence, the fundamental group $\pi_1(\Gal{X})$
is trivial.
\end{proof}

\subsubsection{Triple point}\label{Cayley} The union of three planes meeting at
a triple point, as shown in Figure \ref{ozen-haman}, was first studied as a degeneration of the Cayley cubic, a
singular surface with four nodes. However, there are also smooth cubic surfaces that degenerate to this union:
for example, the surface in $\C\P^3$ defined by $xyz+tw(x^2+y^2+z^2+w^2),$ which degenerates when $t=0$ to
$xyz.$

\begin{figure}[ht]
\epsfysize=2cm 
\begin{minipage}{\textwidth}
\begin{center}
\epsfbox{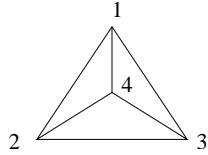}
\end{center}
\end{minipage}
\caption{The degeneration of the Cayley cubic}\label{ozen-haman}
\end{figure}

In the paper \cite{Cayley}, the initial braid monodromy factorization of the degenerate surface consisting of
three planes meeting at a point is found to be $\Delta^2_3$, and by regenerating, the following braid monodromy
factorization is obtained for the complement of the branch curve $S$:
\begin{proposition}\cite{Cayley}
The braid monodromy factorization of $S$ is given in (\ref{del1}), and its factors are represented by the paths
in Figure \ref{mondel1}.
\begin{eqnarray} \label{del1}
{\Delta^2_S} = & {(Z_{1' \; 3}^2)}^{Z^{-2}_{2', 3 \; 3'}} \cdot (Z_{1' \; 3'}^2)^{Z_{1' \; 3}^2 Z^{-2}_{2', 3 \;
3'}} \cdot {(Z_{1 \; 3}^2)}^{Z^{-2}_{2', 3 \; 3'}} \cdot (Z_{1 \; 3'}^2)^{Z_{1 \; 3}^2 Z^{-2}_{2', 3 \; 3'}} \\
& \cdot {(Z_{2 \; 2'})}^{Z^{-2}_{1 \; 1', 2} \bar{Z}^2_{2, 3 \; 3'}} \cdot {(Z^3_{2, 3 \; 3'})}^ {Z^2_{2 \; 2'}}
\cdot  Z^3_{1 \; 1', 2'} \cdot Z_{2 \; 2'} \cdot Z_{1,1'} \cdot Z_{3,3'}. \nonumber
\end{eqnarray}

Note that the first, the fourth and the last two paths correspond to braids of branch points. The second and
third paths correspond to braids of cusps, and the rest correspond to braids of nodes.

\begin{figure}[ht]
\epsfysize=11cm 
\begin{center}
\epsfbox{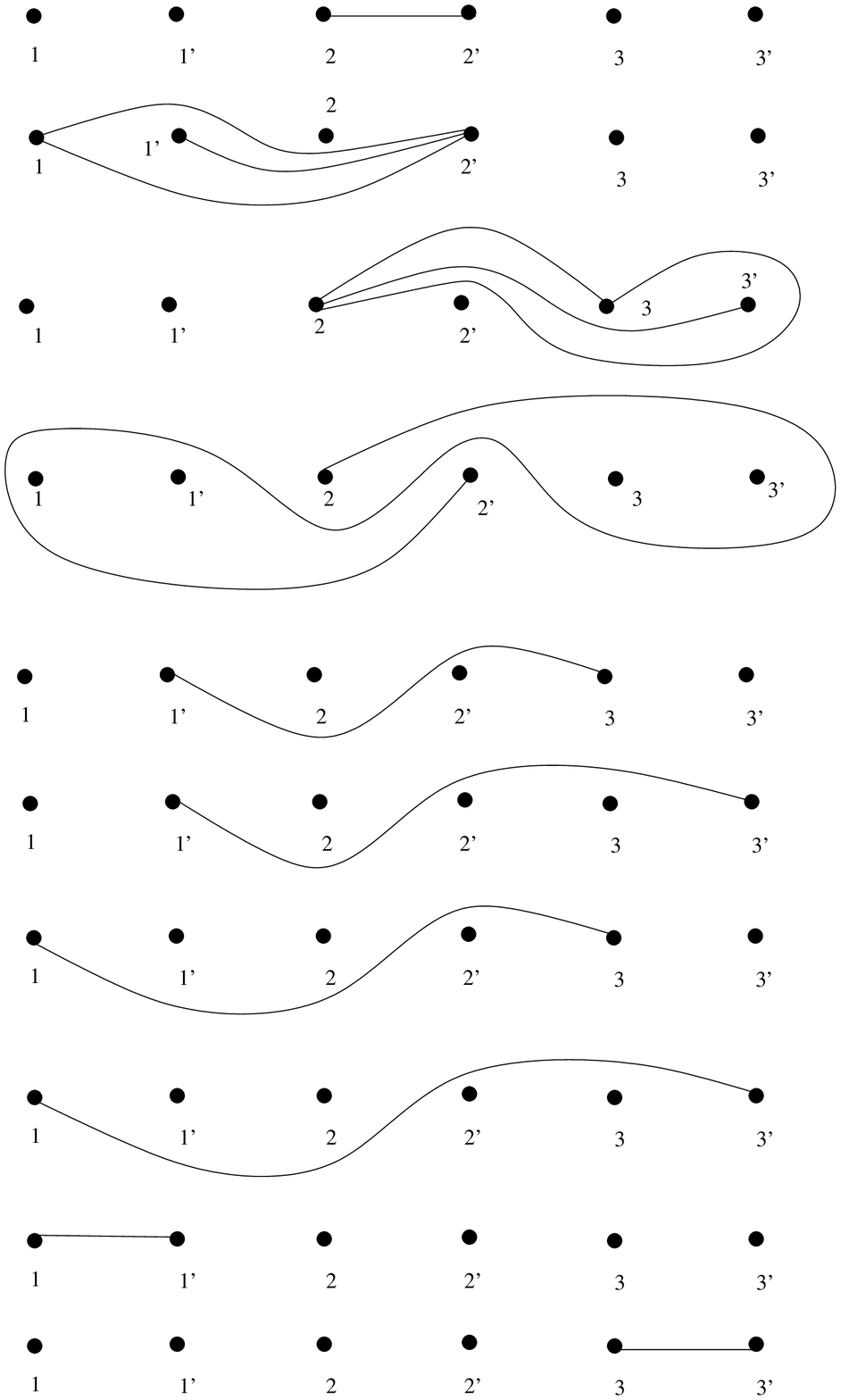}
\end{center}
\caption{Braid monodromy factorization of the Cayley branch curve}\label{mondel1}
\end{figure}
\end{proposition}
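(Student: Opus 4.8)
The plan is to derive the factorization (\ref{del1}) by the regeneration method of the Method subsection, starting from the degenerate factorization $\Delta^2_3$ and pushing it through the regeneration rules one singular fiber at a time. First I would fix the degeneration of Figure \ref{ozen-haman}, in which the three planes meet at a single triple point, and record its branch curve: the arrangement of the three intersection lines, which I label $1$, $2$, $3$, all passing through the triple point. Taking as established (from \cite{Cayley}, or re-derived by the Moishezon--Teicher algorithm of \cite{17}) that the braid monodromy factorization of this degenerate curve is $\Delta^2_3$, I would identify which factor records which pairwise crossing and which records the triple point itself, so that the regeneration can be applied factor by factor.

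The central step is the regeneration. Each line $i$ regenerates to a conic with two branches, labelled $i$ and $i'$, so the regenerated branch curve $S$ has degree $6$ and its braid monodromy lives in $B_6[D,K]$ on the six points $1,1',2,2',3,3'$. I would apply the three regeneration rules recalled above: a branch point $Z_{i\,i'}$ (exponent $\epsilon=1$) persists as a branch point, a node $Z^2_{i\,j}$ ($\epsilon=2$) regenerates to $Z^2_{i,\,j\,j'}$ (and, when both branches split, to four nodes), and a tangency $Z^3_{i\,j}$ ($\epsilon=3$) regenerates to the cusp triple $Z^3_{i,\,j\,j'}$. Thus the crossing of lines $1$ and $3$ produces the four node factors $(Z^2_{1'\,3})^{\cdots}$, $(Z^2_{1'\,3'})^{\cdots}$, $(Z^2_{1\,3})^{\cdots}$, $(Z^2_{1\,3'})^{\cdots}$ of the first line of (\ref{del1}); the interaction of line $2$ with the two regenerated conics produces the cusp triples $Z^3_{2,\,3\,3'}$ and $Z^3_{1,\,1'\,2'}$; and the turning points of the three conics produce the branch factors $Z_{2\,2'}$, $Z_{1,1'}$, $Z_{3,3'}$. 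The conjugating braids --- the exponents such as $Z^{-2}_{2',\,3\,3'}$ and $Z^{-2}_{1\,1',\,2}\,\bar{Z}^2_{2,\,3\,3'}$ --- would be computed by transporting each local skeleton along its g-base path via the Lefschetz diffeomorphism $\Psi$, tracking the cyclic positions of the six points in $\C_u$; these are exactly the paths drawn in Figure \ref{mondel1}.

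The main obstacle is the faithful regeneration of the triple point. Unlike an isolated node or tangency, the $3$-point is a degenerate singularity whose regeneration interleaves nodes, cusps and branch points in a prescribed order, and whose analysis (following \cite{pil}, \cite{17}, \cite{19}) demands careful bookkeeping of the conjugations induced as each of the three lines is regenerated in turn. I expect the delicate point to be verifying that these conjugating factors compose in the stated way and that the ordering of the factors is the one forced by the successive regenerations, with no factor omitted or double-counted.

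Finally, I would confirm consistency in two independent ways. The degree of any braid monodromy factorization of $S$ must equal $p(p-1) = 6 \cdot 5 = 30$; summing the contributions in (\ref{del1}) --- four nodes ($4 \times 2$), two cusp triples ($2 \times 9$) and four branch points ($4 \times 1$) --- gives $8 + 18 + 4 = 30$, as required. Moreover, since regeneration carries the central full twist $\Delta^2_3$ of the degenerate curve to the central full twist $\Delta^2_6 = \Delta^2_S$ of the regenerated curve, the ordered product of all the factors in (\ref{del1}) is automatically equal to $\Delta^2_S$; checking this against the invariance properties of the factorization completes the argument.
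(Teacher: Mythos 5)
Your proposal follows essentially the same route as the paper, which establishes this proposition by citation to \cite{Cayley}: take the degenerate branch curve of the three planes through a triple point, whose braid monodromy factorization is $\Delta^2_3$, regenerate it via the regeneration rules (nodes $\to$ pairs/quadruples of nodes, tangencies $\to$ cusp triples, branch points persisting), track the conjugating braids by transporting skeletons with the Lefschetz diffeomorphism, and confirm the degree count $8+18+4=30=6\cdot 5$. The only imprecision is at the start: since all three lines of the degenerate curve are concurrent, its factorization consists of the \emph{single} factor $\Delta^2_3$ (there are no separate pairwise crossings), so every factor of (\ref{del1}) — the four nodes, the two cusp triples, and the four branch points — emerges from regenerating that one $3$-point, which is precisely the delicate bookkeeping you correctly single out as the crux of the argument.
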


\begin{proposition}\cite{Cayley}
The fundamental group $\pi_1(\C^2 - S)$ is generated by $\G_1, \G_{2}, \G_{3},$ subject to the relations
\begin{tiny}
\begin{eqnarray}
{}\langle\G_{1}, \G_{2}\rangle & = & e, \label{fin1}\\
{}\langle\G_{2}, \G_{3}\rangle & = & e, \label{fin2}\\
{}[ \G_{2}, \G_{1}^2 \G_{3}^2] & = & e, \label{fin3}\\
{}[ \G_{1}, \G_{2}^{-1} \G_{3} \G_2] & = & e,\label{fin4}
\end{eqnarray}
\end{tiny}
where we denote by $\langle a,b\rangle=e$ the braid relation between $a$ and $b,$ $aba=bab.$  The group
$G=\pi_1(\C\P^2 - \bar{S})$ has the relations (\ref{fin1}), (\ref{fin2}), (\ref{fin4}) and the additional
projective relation
\begin{tiny}
\begin{eqnarray}\label{relpro}
\G_{3}^2 \G_{2}^2 \G_{1}^2 & = & e.
\end{eqnarray}
\end{tiny}
\end{proposition}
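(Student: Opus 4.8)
The plan is to apply the Van Kampen theorem to the braid monodromy factorization \eqref{del1}, reading off one relation from each of its ten factors and then simplifying. Because the degeneration is three lines through a triple point, each regenerating into a conic, the complement initially carries six generators $\G_1,\G_{1'},\G_2,\G_{2'},\G_3,\G_{3'}$, one per branch of the three conics $(1,1')$, $(2,2')$, $(3,3')$. Following the dictionary recalled in the introduction, a branch-point factor $Z_{i\,i'}$ yields $\G_i=\G_{i'}$, a node factor $Z^2_{i\,j}$ yields $[\G_i,\G_j]=e$, and a cusp factor $Z^3_{i\,j}$ yields $\langle\G_i,\G_j\rangle=e$; when a factor carries a conjugating braid $w$, the same relation holds with the loops replaced by the words obtained by letting $w$ act on them through the Artin action of the braid group on the free group, which one reads off the paths drawn in Figure \ref{mondel1}.

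First I would use the four branch-point factors $Z_{1\,1'}$, $Z_{2\,2'}$, $Z_{3\,3'}$ and the conjugated branch factor to identify $\G_{i'}$ with $\G_i$, reducing the presentation to the three generators $\G_1,\G_2,\G_3$. The two cusp factors then give the braid relations: the factor $Z^3_{1\,1',2'}$ produces \eqref{fin1}, $\langle\G_1,\G_2\rangle=e$, and the factor $(Z^3_{2,3\,3'})^{Z^2_{2\,2'}}$ produces \eqref{fin2}, $\langle\G_2,\G_3\rangle=e$, once the identifications are applied and the conjugation is carried out. The main work is with the four conjugated node factors between the conics $(1,1')$ and $(3,3')$: naively each is a commutator of a branch of $(1,1')$ with a branch of $(3,3')$, but the conjugations by $Z^{-2}_{2',3\,3'}$ and the extra half-twists twist these commutators by words in $\G_2,\G_3$, and after substituting $\G_{1'}=\G_1$, $\G_{3'}=\G_3$ and simplifying, the four relations collapse to exactly \eqref{fin3} and \eqref{fin4}. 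Tracking these conjugations --- translating each conjugating braid into its action on the generators and reducing the resulting words modulo the relations already in hand --- is the step I expect to be the main obstacle, and it is where the explicit paths of Figure \ref{mondel1} are essential.

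For the projective group $G=\pi_1(\C\P^2-\bar S)$ I would adjoin the projective relation $\prod_j\G_j=e$; in the relevant ordering of the fiber this reads $\G_{3'}\G_3\G_{2'}\G_2\G_{1'}\G_1=e$, so after the identifications $\G_{i'}=\G_i$ it becomes \eqref{relpro}, $\G_3^2\G_2^2\G_1^2=e$. It then remains to see that \eqref{fin3} is now redundant: from \eqref{relpro} one has $\G_1^2=\G_2^{-2}\G_3^{-2}$, hence $\G_1^2\G_3^2=\G_2^{-2}$, and since $\G_2^{-2}$ obviously commutes with $\G_2$ the commutator $[\G_2,\G_1^2\G_3^2]$ is automatically trivial. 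Thus \eqref{fin3} may be dropped, and $G$ is presented by \eqref{fin1}, \eqref{fin2}, \eqref{fin4} and \eqref{relpro}, as claimed. Apart from the node bookkeeping, every step here is a direct application of Van Kampen together with this short group-theoretic reduction.
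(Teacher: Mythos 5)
Two framing remarks before the main point. First, the paper you are being compared against does not actually prove this proposition; it imports it from \cite{Cayley}, so the closest internal point of comparison is the list of Van Kampen relations the paper itself writes down for the identical sub-factorization $\Delta_1$ in Subsection \ref{Cayley+}. Second, your overall strategy (Van Kampen applied factor-by-factor to (\ref{del1}), elimination of the primed generators, then the projective relation) is the right one, and your closing argument is correct and complete: from (\ref{relpro}) one gets $\G_1^2\G_3^2=\G_2^{-2}$, which visibly commutes with $\G_2$, so (\ref{fin3}) is indeed redundant in the projective group.

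The genuine gap is in the step you defer as ``the main obstacle,'' and it is not mere bookkeeping: your roadmap attributes (\ref{fin3}) to the wrong factors, so the plan as written would fail to produce it. After the identifications $\G_1'=\G_1$, $\G_2'=\G_2$, $\G_3'=\G_3$, all four conjugated node factors collapse to the single relation (\ref{fin4}), not to (\ref{fin3}) and (\ref{fin4}): the paper records these four relations explicitly for $\Delta_1$ (see (\ref{Cayley+59}) and the unlabeled line following it) as $[\G_1',\G_2'^{-1}\G_3\G_2']$, $[\G_1',\G_2'^{-1}\G_3^{-1}\G_3'\G_3\G_2']$, $[\G_1,\G_2'^{-1}\G_3\G_2']$, $[\G_1,\G_2'^{-1}\G_3^{-1}\G_3'\G_3\G_2']$, each of which becomes $[\G_1,\G_2^{-1}\G_3\G_2]=e$ upon substitution. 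The relation (\ref{fin3}) instead comes from the factor you propose to treat as a mere identification: the conjugated branch point $(Z_{2\;2'})^{Z^{-2}_{1\;1',2}\bar{Z}^2_{2,3\;3'}}$. Its Van Kampen relation is not $\G_2=\G_2'$ (that comes from the separate, unconjugated factor $Z_{2\;2'}$) but the long relation $\G_1^{-1}\G_1'^{-1}\G_2'^{-1}\G_3'\G_3\G_2'\G_2\G_2'^{-1}\G_3^{-1}\G_3'^{-1}\G_2'\G_1'\G_1=\G_2'$, again recorded in Subsection \ref{Cayley+}; after the identifications it reads $\G_1^{-2}\G_2^{-1}\G_3^2\G_2\G_3^{-2}\G_2\G_1^2=\G_2$, and modulo (\ref{fin2}) and (\ref{fin4}) this is equivalent to (\ref{fin3}) (use (\ref{fin4}) to commute $\G_1^{\pm2}$ past $\G_2^{-1}\G_3^{\pm2}\G_2$, together with the identity $\G_2\G_3^2\G_2\G_3^{-2}=\G_3^{-2}\G_2\G_3^2\G_2$, valid whenever $\G_2,\G_3$ satisfy the braid relation). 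Moreover (\ref{fin3}) is genuinely independent of the other three relations, so the omission is fatal for the affine statement: mapping $\G_1\mapsto\sigma_1$, $\G_2\mapsto\sigma_2$, $\G_3\mapsto\sigma_2\sigma_3\sigma_2^{-1}$ into the braid group $B_4$ satisfies (\ref{fin1}), (\ref{fin2}), (\ref{fin4}), yet $\sigma_2$ does not commute with $\sigma_1^2\sigma_2\sigma_3^2\sigma_2^{-1}$ (compare images in the abelianization of the pure braid group, where conjugation by $\sigma_2$ permutes the linking coordinates). So carrying out your plan literally would yield the presentation $\langle\G_1,\G_2,\G_3\mid(\ref{fin1}),(\ref{fin2}),(\ref{fin4})\rangle$ for $\pi_1(\C^2-S)$, which is strictly weaker than the one claimed; only the projective half of the proposition would survive, and only because (\ref{fin3}) happens to be redundant there.
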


\begin{theorem}
The fundamental group of the Galois cover of the Cayley cubic (or its smoothing) is $\mathbb{Z}_2 \times
\mathbb{Z}_2$.
\end{theorem}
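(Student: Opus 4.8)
The plan is to read off $\pi_1(\Gal{X})$ directly from the exact sequence \eqref{M-T}. Each $\G_i$ maps to a transposition, so every $\G_i^2$ lies in the kernel $K$; by the third step, $\pi_1(\Gal{X})$ is the kernel of the surjection $\widetilde{G}\to S_3$ induced by \eqref{M-T}, where $\widetilde{G}$ is the quotient of $G=\pi_1(\C\P^2-\bar S)$ by the normal closure of $\{\G_1^2,\G_2^2,\G_3^2\}$. So I would first compute $\widetilde{G}$. Imposing $\G_i^2=e$ kills the projective relation \eqref{relpro} (it now reads $e\cdot e\cdot e=e$), leaving three involutions $\G_1,\G_2,\G_3$ subject to the braid relations \eqref{fin1}, \eqref{fin2} and the commutation relation \eqref{fin4}.

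The crux is a change of generators. Put $d:=\G_2\G_3\G_2=\G_2^{-1}\G_3\G_2$, an involution; then $\G_3=\G_2 d\G_2$, so $\{\G_1,\G_2,d\}$ still generates $\widetilde{G}$. Relation \eqref{fin4} becomes $[\G_1,d]=e$. Substituting $\G_3=\G_2 d\G_2$ into \eqref{fin2} and reducing with $\G_2^2=e$ turns $\G_2\G_3\G_2=\G_3\G_2\G_3$ into $\G_2 d\G_2=d\G_2 d$, i.e. the braid relation $\langle\G_2,d\rangle=e$. Together with \eqref{fin1}, the group $\widetilde{G}$ is thus presented by three involutions $\G_1,\G_2,d$ in which the adjacent pairs $(\G_1,\G_2)$ and $(\G_2,d)$ braid while the end pair $(\G_1,d)$ commutes. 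This is exactly the Coxeter presentation of type $A_3$, so $\widetilde{G}\cong S_4$, with $\G_1,\G_2,d$ corresponding to the simple transpositions $(1\,2),(2\,3),(3\,4)$.

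It then remains to identify the kernel of $\widetilde{G}\cong S_4\twoheadrightarrow S_3$. Since the map is onto and $|S_4|=24$, $|S_3|=6$, this kernel is a normal subgroup of order $4$; the unique normal subgroup of order $4$ in $S_4$ is the Klein four-group, whence $\pi_1(\Gal{X})\cong\mathbb{Z}_2\times\mathbb{Z}_2$. As a consistency check, under the map to $S_3$ the two end generators $\G_1$ and $d$ acquire the same image while $\G_2$ does not (indeed $d=\G_2\G_3\G_2$ equals the third transposition, which is the image of $\G_1$), exactly as for the standard surjection $S_4\to S_3$ afforded by the action on the three partitions of $\{1,2,3,4\}$ into pairs.

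I expect the only real obstacle to be the middle paragraph: one must rewrite \eqref{fin2} carefully after the substitution $\G_3=\G_2 d\G_2$ to see the braid relation $\langle\G_2,d\rangle=e$ emerge, and thereby confirm that $\widetilde{G}$ is the full $A_3$ Coxeter group $S_4$ rather than a proper quotient. Once the presentation is secured, the identification of the kernel is immediate from the normal-subgroup lattice of $S_4$.
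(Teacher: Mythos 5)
Your proof is correct and follows essentially the same route as the paper: the same change of generators $\G_3\mapsto\G_2^{-1}\G_3\G_2$, the same identification $G/\langle\G_1^2,\G_2^2,\G_3^2\rangle\cong S_4$, and the same final step that the kernel of the surjection $S_4\to S_3$ is the Klein four-group. The only cosmetic difference is that you recognize the quotient by the squares directly via the $A_3$ Coxeter presentation, whereas the paper first realizes $G$ as a quotient of the braid group $B_4$ and then kills the generator squares (implicitly using that $\Ker(B_4\to S_4)$ is normally generated by those squares); your variant is, if anything, slightly more self-contained.
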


\begin{proof}
We relabel the generators of $G$ by replacing $\G_3$ by $\G_3' = \G_2^{-1}\G_3 \G_2,$ to obtain the relations
(\ref{fin1}) and \begin{tiny}\begin{eqnarray}
{}\langle\G_2, \G_3'\rangle &=& e\\
{}[\G_1, \G_3'] &=&e\\
{}\G_2\G_3'^2\G_2\G_1^2 &=&e.\label{8lab}
\end{eqnarray}
\end{tiny}
Thus, this group is a quotient of the braid group $B_4$ by the relation (\ref{8lab}). Since this relation is in
the kernel of the  map $B_4 \rightarrow S_4,$ our fundamental group has a natural map to $S_4,$ whose
kernel is normally generated by $\G_1^2,$ $\G_2^2,$ and $\G_3'^2.$  So $G/\langle \G_1^2, \G_2^2, \G_3^2\rangle \cong S_4.$  Composing this map with a surjective map $S_4
\rightarrow S_3,$ whose kernel is isomorphic to $\mathbb{Z}_2 \times \mathbb{Z}_2$ and is normally generated by
$s_1s_3,$ we obtain the map $\pi_1(\C\P^2 - \bar{S}) \rightarrow S_3.$  The kernel $K/\langle \G_1^2, \G_2^2, \G_3^2\rangle $ is isomorphic to $\mathbb{Z}_2 \times \mathbb{Z}_2,$ generated by
$\G_1\G_3'$ and its conjugates.  Hence we obtain $\pi_1(\Gal{X}) = K / \langle
\G_1^2, \G_2^2, \G_3'^2\rangle \cong \mathbb{Z}_2 \times \mathbb{Z}_2.$\end{proof}

\subsection{Quartics}\label{Quartics}
\begin{theorem}
There are five possible quartic degenerations, corresponding to Figures \ref{trapez4}, \ref{Ver4}, \ref{(1,2)},
\ref{ozen-haman+} and \ref{4-ptsurf}.
\end{theorem}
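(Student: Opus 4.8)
The plan is to carry the combinatorial gluing argument used for the cubics one triangle further, so I would build the quartic diagrams by attaching a fourth triangle to each of the two cubic diagrams of Figures \ref{trap3} and \ref{ozen-haman}. The enumeration is governed by two layers of data: the \emph{edge}-adjacencies of the triangulation, which record which pairs of planes meet in a line (these are exactly the edges of the dual graph), and the \emph{vertex}-incidences, which record the multiple points of the arrangement (triple points, quadruple points). Both layers matter, because two gluings with the same dual graph can still be genuinely different degenerations. Throughout I keep only the admissible triangulations in the sense already fixed above: the dual graph is connected (so that the branch curve is connected), no edge lies on three triangles (no three planes in a line), two triangles meeting at a point share an edge, and the complex is planar.

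I would organize the count by the dual graph first. There are six connected graphs on four vertices -- the path, the star $K_{1,3}$, the triangle-with-pendant, the four-cycle, the diamond ($K_4$ minus an edge), and $K_4$ -- and the constraint that a plane meets at most three others holds automatically here. The diamond and $K_4$ must be discarded: each of them forces the four triangles to be the four faces of a tetrahedron, which cannot be realized by a planar triangulation (this is the obstruction signalled by Kuratowski's theorem). The four survivors are realizable, and three of them give a single diagram each: the star is the subdivided triangle of Figure \ref{Ver4}, the triangle-with-pendant is the triple-point-plus-plane of Figure \ref{ozen-haman+}, and the four-cycle is the four-point of Figure \ref{4-ptsurf} (four triangles closing up around one quadruple point). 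The path, however, refines into two admissible diagrams according to the vertex data: either the two joints are distinct triple points, giving the strip, or they coincide in a single quadruple point through which all four planes pass, giving a four-triangle fan. These are the two remaining diagrams, Figures \ref{trapez4} and \ref{(1,2)}, and this refinement is exactly what makes the total five rather than four.

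The step I expect to be the main obstacle is establishing completeness together with the correct splitting. On one side I must prove that no admissible diagram is missed; the dual-graph classification above is the clean route, but it is not self-sufficient, since the path splits, so I also have to argue that none of the other three surviving dual graphs admits a second realization once vertex-incidences are allowed to vary (for instance, that the star cannot absorb its three triple points into a higher point without changing its dual graph). On the other side I must justify discarding the diamond and $K_4$ rigorously, i.e.\ that the tetrahedral gluing is the only realization of those two dual graphs and is genuinely non-planar. Once these two points are settled the count is pinned to five. Finally, each surviving diagram still has to be exhibited as the degeneration of an actual algebraic surface; that realizability is verified case by case in the subsections that follow (\ref{F2}, \ref{V4}, \ref{F0-12}, \ref{Cayley+}, \ref{4-pt}), and so need not be repeated here.
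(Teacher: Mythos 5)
Your proposal is correct and lands on the same five diagrams, but it is organized in a genuinely different way from the paper's proof. The paper works bottom-up: it attaches a fourth triangle to the two cubic arrangements of Figures \ref{trap3} and \ref{ozen-haman} (in ``different places'' for the first, one way for the second) and then asserts that no further gluing is possible, so completeness rests on the asserted exhaustiveness of the attachment process. You work top-down: fix the dual graph (the six connected graphs on four vertices), discard the diamond and $K_4$, and classify realizations graph by graph, with the path refining into the strip and the fan according to the vertex data. Your route buys a structural completeness argument --- nothing can be missed once every dual graph is accounted for --- and it makes the reason the answer is five rather than four completely transparent: only the path carries two inequivalent vertex structures, and that is exactly the distinction between Figures \ref{trapez4} and \ref{(1,2)}. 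It also places the $4$-point correctly, as the unique realization of the four-cycle. By contrast, the paper's statement that Figure \ref{4-ptsurf} is obtained from Figure \ref{ozen-haman+} by ``gluing two more edges together'' cannot be right as written: gluing can only add dual-graph adjacencies, and the triangle-with-pendant is not a subgraph of the four-cycle; carrying out that gluing actually produces the boundary of a tetrahedron. The closed four-fan of Figure \ref{4-ptsurf} is instead reached by closing up the fan of Figure \ref{(1,2)} (or by attaching a triangle to Figure \ref{trap3} along its two extreme edges). What the paper's route buys is brevity and continuity with the cubic classification; what yours buys is a proof whose completeness step is actually checkable.

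The two obstacles you flag are real but are discharged by elementary simplicial-complex facts, so they do not endanger the plan. First, two triangles containing the same pair of vertices necessarily share the edge joining them; hence any three pairwise edge-adjacent triangles have concurrent shared edges. This simultaneously gives the uniqueness of the realizations of the star, the triangle-with-pendant and the four-cycle (none of them can ``absorb'' its multiple points differently), and shows that the diamond's five adjacencies force the sixth --- so the diamond is not realizable at all, rather than merely non-planar. Second, $K_4$ then forces the full boundary complex of a tetrahedron, a triangulated sphere, which is not a planar (disk) triangulation. One caution on attribution: your parenthetical appeal to Kuratowski is misplaced, since $K_4$ is a planar graph and Kuratowski's criterion does not exclude it; the genuine obstruction is that a closed-surface triangulation cannot be planar. (The same looseness appears in the paper's general discussion of planarity, so state the argument as above rather than leaning on that citation.)
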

\begin{proof}
We construct the degenerations combinatorially by gluing triangles.  Beginning with the arrangement of
triangles found in Figure \ref{trap3}, we can add one more triangle in different places to obtain Figures
\ref{trapez4}, \ref{Ver4} and \ref{(1,2)}.  Beginning with that found in Figure \ref{ozen-haman}, we can add one
more triangle to obtain the quartic arrangement in Figure \ref{ozen-haman+}, and then glue two more edges
together to obtain that in Figure \ref{4-ptsurf}.

It is not possible to glue them any further (for instance, to obtain a non-simply-connected arrangement like the
torus degenerations that appear in \cite{fr2} in higher degrees), because in degree $4$ this would force us to
have either three planes meeting in a line, or two lines through the same pair of points, both of which are
forbidden configurations.
\end{proof}

\subsubsection{The surface $F_2$}\label{F2} Consider the Hirzebruch surface $F_2$ defined by $\E=\O \oplus \O(-2)$
on $\mathbb{P}^1.$  It is a toric variety and can be embedded in $\C\P^5$ by the linear system $|\ell_1 +
\ell_2|,$ where $\ell_1$ is the $(-2)$-section and $\ell_2$ is the fibre.  Its degeneration is a union of four
planes in $\C\P^5$, as depicted in Figure \ref{trapez4}, as shown in \cite{Ogata}.

\begin{figure}[ht!]
\epsfysize=2cm 
\begin{minipage}{\textwidth}
\begin{center}
\epsfbox{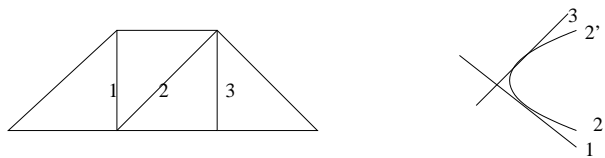}
\end{center}
\end{minipage}
\caption{Degeneration and Regeneration of the Hirzebruch surface $F_2$}\label{trapez4}
\end{figure}

The branch curve $S_0$ in $\C\P^2$ is an arrangement of three lines. Regenerating it, the diagonal line
regenerates to a conic, which is tangent to the lines $1$ and $3$. When the lines regenerate, each tangency
regenerates into three cusps. We obtain the branch curve $S$, which is of degree $6$ and which has six cusps.

As computed in \cite{Ogata}, the braid monodromy factorization corresponding to $C$ gives an expression of
$\Delta_6^2$ as the braids shown in Figure \ref{F_2-braids},

\begin{figure}[h]
\epsfxsize=3cm 
\begin{minipage}{\textwidth}
\begin{center}
\epsfbox{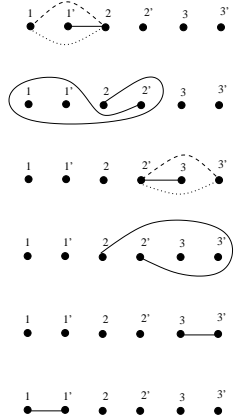}
\end{center}
\end{minipage}
\caption{$Z^3_{1 \; 1', 2} \cdot {Z_{2\; 2' }}^{Z^2_{1\;1', 2}}, Z^3_{2', 3\;3'} \cdot {Z_{2\; 2'}}^{Z^2_{2',
3\; 3'}}, Z_{3\; 3'}, Z_{1 \; 1'}$}\label{F_2-braids}
\end{figure}

and we also have the parasitic intersection braid given in Figure \ref{F_2-para}.

\begin{figure}[h]
\epsfxsize=3cm 
\begin{minipage}{\textwidth}
\begin{center}
\epsfbox{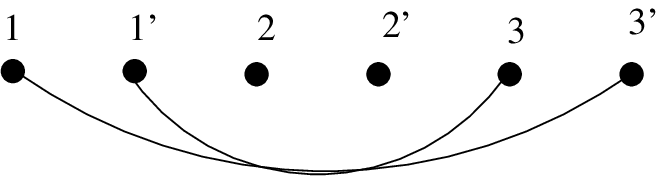}
\end{center}
\end{minipage}
\caption{${Z^2}_{\hspace{-.1cm}{1 \; 1',3 \; 3'}}$}\label{F_2-para}
\end{figure}

We apply the van Kampen Theorem to the above braids to get a presentation for $\pi_1(\C^2 \setminus S)$.

After simplifying the relations, we find that the fundamental group is generated by $\G_{1}, \G_{2}, \G_{3}$
subject to the relations \begin{tiny}\begin{eqnarray*}
{}\langle\G_{i} , \G_{i+1}\rangle & = & e, \ \ \mbox{\emph{for i=1, 2,}} \\
{}[ \G_{1} , \G_{3} ] & = & e,\\
{}\G_1^{-2} \G_{2} \G_1^{2} & = & \G_3^{-2} \G_{2} \G_3^{2}.
\end{eqnarray*}
\end{tiny}
Clearly, the group $\pi_1(\C\P^2 \setminus S)$ is isomorphic to the braid group quotient ${\mathcal{B}_4} /
{\langle\G_2 \G_3^2 \G_2 \G_1^2\rangle}$.  The map to $S_4$ is given by $\G_i \mapsto s_i,$ and its kernel $K$
is normally generated by the $\G_i^2.$  Hence, the quotient $\pi_1(\Gal{X})=K/\langle\G_i^2\rangle$ is trivial.

\subsubsection{The Veronese surface $V_2$}\label{V4} We consider the Veronese surface of order $2$, i.e., the
embedding of $\mathbb{CP}^2$ into $\mathbb{CP}^5$ given by $(x:y:z) \mapsto (x^2:y^2:z^2:xy:yz:xz).$ This is a
surface of degree $4,$ which was treated in a very abstract manner by \cite{GalCovs}.  The fundamental group of
the Galois cover was found there to be $\mathbb{Z}^4.$

The degeneration is not necessary in this case, but for the sake of completeness in our survey of plane
arrangements, we note that the surface degenerates to the union of four planes depicted in Figure \ref{Ver4},
and we present the fundamental group explicitly in terms of generators and relations.

\begin{figure}[h]
\epsfysize=2.5cm 
\begin{minipage}{\textwidth}
\begin{center}
\epsfbox{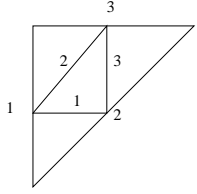}
\end{center}
\end{minipage}
\caption{Degeneration of Veronese}\label{Ver4}
\end{figure}

The branch curve consists of three lines meeting at three different vertices. We regenerate each vertex in turn,
and use the van Kampen Theorem to obtain relations among the generators $\G_i$ and $\G_i'$ (for $i=1,$ $2,$ $3$)
of the fundamental group $\pi_1(\C\P^2 \setminus S)$ of the complement of the branch curve in the projective
plane.

Vertex 1 regenerates to a line (1) tangent to a conic (2,2'), as in Figure \ref{V1fig}.
\begin{figure}[h]
\epsfysize=1.5cm 
\begin{minipage}{\textwidth}
\begin{center}
\epsfbox{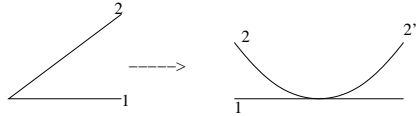}
\end{center}
\end{minipage}
\caption{Regeneration of Vertex 1 of Veronese}\label{V1fig}
\end{figure}

It gives rise to the braid monodromy factors $Z^3_{1\;1', 2}$ and $\left(Z_{2\;2'}\right)^{Z_{1\;1', 2}^2},$
which by the Van Kampen theorem yield the relations \begin{tiny} \begin{eqnarray}
{}\langle\G_1 , \G_2\rangle & = & e, \label{V1}\\
{}\langle\G_1' , \G_2\rangle & = & e, \label{V1'}\\
{}\langle\G_1^{-1}\G_1'\G_1, \G_2\rangle & = & e, \label{V1conj}\\
{}\G_2\G_1'\G_1\G_2\G_1^{-1}\G_1'^{-1}\G_2^{-1} &=& \G_2'.\label{V1a}
\end{eqnarray}\end{tiny}

Vertex 2 regenerates to a line (1) tangent to a conic (3,3'), and Vertex 3 to a line (3) tangent to a conic
(2,2'). \forget , as in Figure \ref{V2fig}.
\begin{figure}[h]
\begin{minipage}{\textwidth}
\begin{center}
\epsfbox{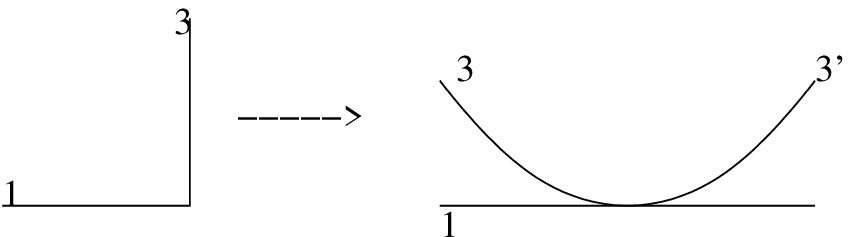}
\end{center}
\end{minipage}
\caption{Regeneration of Vertex 2 of Veronese}\label{V2fig}
\end{figure}
\forgotten They give rise to the braid monodromy factors $Z^3_{1\;1', 3}$ and
$\left(Z_{3\;3'}\right)^{Z^2_{1\;1', 3}},$ which yield the relations \begin{tiny}\begin{eqnarray}
{}\langle\G_1 , \G_3\rangle & = & e, \label{V2}\\
{}\langle\G_1' , \G_3\rangle & = & e, \label{V2'}\\
{}\langle\G_1^{-1}\G_1'\G_1, \G_3\rangle & = & e, \label{V2conj}\\
{}\G_3\G_1'\G_1\G_3\G_1^{-1}\G_1'^{-1}\G_3^{-1} &=& \G_3'.\label{V2a},
\end{eqnarray}\end{tiny}
\forget
 Vertex 3 regenerates to a line (3) tangent to a conic (2,2'). as in Figure \ref{V3fig}.
\begin{figure}[h]
\begin{minipage}{\textwidth}
\begin{center}
\epsfbox{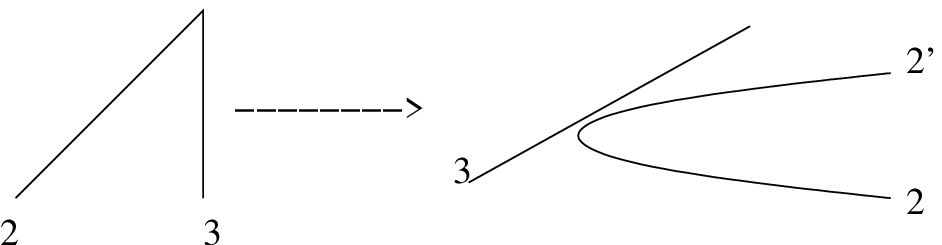}
\end{center}
\end{minipage}
\caption{Regeneration of Vertex 3 of Veronese}\label{V3fig}
\end{figure}
\forgotten
 and to the braid monodromy factors $Z^3_{2', 3\;3'}$
and $\left(Z_{2\;2'}\right)^{Z^2_{2', 3\;3'}},$ which yield the relations \begin{tiny}\begin{eqnarray}
{}\langle\G_2' , \G_3\rangle & = & e, \label{V3}\\
{}\langle\G_2', \G_3'\rangle & = & e, \label{V3'}\\
{}\langle\G_2', \G_3^{-1}\G_3'\G_3\rangle & = & e, \label{V3conj}\\
{}\G_3'\G_3\G_2'\G_3^{-1}\G_3'^{-1} &=& \G_2\label{V3a}.
\end{eqnarray}\end{tiny}

 In addition to these relations, we have the projective relation
\begin{tiny}\begin{equation}\label{Vproj} \G_3'\G_3\G_2'\G_2\G_1'\G_1 = e.
\end{equation}\end{tiny}

\forget
 From (\ref{V1conj}),
$$\langle\G_1', \G_1\G_2\G_1^{-1}\rangle=e.$$
Using (\ref{V1}), we get that $$\langle\G_1', \G_2^{-1}\G_1\G_2\rangle=e.$$ Hence
$$\langle\G_2\G_1'\G_2^{-1},\G_1\rangle=e.$$ Hence, using (\ref{V1'}),
$$\langle\G_1'^{-1}\G_2\G_1', \G_1\rangle=e.$$ Hence we get
\begin{equation}\label{V13}
\langle\G_2, \G_1'\G_1\G_1'^{-1}\rangle=e.
\end{equation}

>From (\ref{V1a}), we get
$$\G_2\G_1'(\G_1\G_2\G_1^{-1})\G_1'^{-1}\G_2^{-1}=\G_2'.$$
Using (\ref{V1}), we get
$$\G_2\G_1'\G_2^{-1}\G_1\G_2\G_1'^{-1}\G_2^{-1}=\G_2'.$$
Using (\ref{V1'}), this becomes
$$\G_1'^{-1}\G_2(\G_1'\G_1\G_1'^{-1})\G_2^{-1}\G_1' = \G_2'.$$
By (\ref{V13}),
$$\G_1'^{-1}\G_1'\G_1^{-1}\G_1'^{-1}\G_2\G_1'\G_1\G_1'^{-1}\G_1'=\G_2',$$
which reduces to
$$\G_1^{-1}\G_1'^{-1}\G_2\G_1'\G_1 = \G_2'.$$

Hence, we have
\begin{equation}\label{V14}
\G_2=\G_1'\G_1\G_2'\G_1^{-1}\G_1'^{-1},
\end{equation}
 or equivalently,
 $$\G_2'=\G_1^{-1}\G_1'^{_1}\G_2\G_1'\G_1.$$

Similarly, using the equations (\ref{V2})-(\ref{V2a}) in place of (\ref{V1})-(\ref{V1a}), and substituting
$\G_3$ and $\G_3'$ for $\G_2$ and $\G_2',$ respectively, we have
\begin{equation}\label{V15}
\langle\G_3, \G_1'\G_1\G_1'^{-1}\rangle=e
\end{equation}
and
\begin{equation}\label{V16}
\G_3=\G_1'\G_1\G_3'\G_1^{-1}\G_1'^{-1},
\end{equation}
or equivalently,
$$\G_3'=\G_1^{-1}\G_1'^{-1}\G_3\G_1\G_1'.$$

By (\ref{V14}), $$\langle\G_1,\G_2'\rangle=\langle\G_1, \G_1^{-1}\G_1'^{-1}\G_2\G_1'\G_1\rangle=$$
$$=\langle\G_1, \G_1'^{-1}\G_2\G_1'\rangle = \langle\G_1'\G_1\G_1'^{-1}, \G_2\rangle = e.$$

Hence, we get
\begin{equation}\label{V17}
\langle\G_1, \G_2'\rangle =e.
\end{equation}

By (\ref{V14}), $$\langle\G_1', \G_2'\rangle = \langle \G_1', \G_1^{-1}\G_1'^{-1}\G_2\G_1'\G_1\rangle =
\langle\G_1', \G_1'\G_1^{-1}\G_1'^{-1}\G_2\G_1'\G_1\G_1'^{-1}\rangle,$$ which by (\ref{V13}) reduces to
$$\langle\G_1', \G_2\G_1'\G_1\G_1'^{-1}\G_2^{-1}\rangle = \langle\G_2^{-1}\G_1'\G_2, \G_1'\G_1\G_1'^{-1}\rangle.$$ By
\ref{V1'}, we can convert this to $$\langle\G_1'\G_2\G_1'^{-1}, \G_1'\G_1\G_1'^{-1}\rangle = \langle\G_2,
\G_1\rangle =e,$$ where the second equality is due to (\ref{V1}).

Hence, we have obtained
\begin{equation}\label{V18}
\langle\G_1', \G_2'\rangle =e.
\end{equation}

In the same way, using (\ref{V2})-(\ref{V3a}) and (\ref{V15})-(\ref{V16}), we get the following relations:
\begin{eqnarray}
\langle\G_1, \G_3'\rangle&=& e \label{V19} \\
\langle\G_1', \G_3'\rangle &=& e \label{V20} \\
\langle\G_2, \G_3\rangle &=& e \label{V21} \\
\langle\G_2, \G_3'\rangle &=& e \label{V22}
\end{eqnarray}

In other words, $$\langle\G_i, \G_j\rangle = \langle\G_i', \G_j\rangle = \langle\G_i', \G_j'\rangle =e,$$ where
$i \neq j,$ $1 \leq i \leq 3,$ and $1 \leq j \leq 3.$

Using (\ref{V3a}), we get $$\G_3'\G_3\G_2'\G_3^{-1}\G_3'^{-1}=\G_2,$$ so
$$\G_3\G_2'\G_3^{-1}=\G_3'^{-1}\G_2\G_3'.$$ From (\ref{V3}) and (\ref{22}), it follows that $$\G_2'^{-1}\G_3\G_2'=
\G_2\G_3'\G_2^{-1};$$ hence $$\G_3=\G_2'\G_2\G_3'\G_2^{-1}\G_2'^{-1},$$ or equivalently,
$$\G_3'=\G_2^{-1}\G_2'^{-1}\G_3\G_2'\G_2.$$

Similarly, from (\ref{V14}) and (\ref{V16}), we have
\begin{eqnarray}
\G_1 &=& \G_2'\G_2\G_1'\G_2^{-1}\G_2'^{-1},\\
\G_1 &=& \G_3'\G_3\G_1'\G_3^{-1}\G_3'^{-1}, \\
\G_1' &=& \G_2^{-1}\G_2'^{-1}\G_1\G_2'\G_2,\\
\G_1' &=& \G_3^{-1}\G_3'^{-1}\G_1\G_3'\G_3;\\
\end{eqnarray}

in other words, $$\G_i'\G_i\G_j'\G_i^{-1}\G_i'^{-1}=\G_j$$ and $$\G_i^{-1}\G_i'^{-1}\G_j\G_i'\G_i= \G_j',$$ for
$i \neq j,$ $1 \leq i \leq 3,$ and $1 \leq j \leq 3.$

Since $\G_2'$ and $\G_3'$ are conjugates of $\G_2$ and $\G_3,$ \forgotten
 After simplifications, we find that
the group $G$ is generated by the four generators $\G_1,$ $\G_2,$ $\G_3$ and $\G_1',$ under the relations
$$\langle\G_1, \G_2\rangle =\langle\G_2, \G_3\rangle = \langle\G_3, \G_1\rangle=e  \ \  \mbox{and} \ \
\langle\G_1', \G_2\rangle =\langle\G_3, \G_1'\rangle =e,$$ and the map $\rho$ maps $\G_1 \mapsto (1,2),$ $\G_2
\mapsto (2,3),$ $\G_3 \mapsto (2,4)$ and $\G_1' \mapsto (1,2).$

\begin{lemma}\label{*lem}
$\Ker \rho$ is generated modulo the $\G_i^2$ by
$$(\G_1\G_2\G_3\G_2)^2, \ (\G_2\G_3\G_1\G_3)^2,  \ (\G_1'\G_2\G_3\G_2)^2, \ \mbox{and} \ (\G_2\G_3\G_1'\G_3)^2.$$
\end{lemma}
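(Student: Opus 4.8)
The plan is to compute $\Ker\rho$ explicitly using the standard Reidemeister--Schreier approach adapted to the present situation. We have the map $\rho$ sending $\G_1, \G_1' \mapsto (1,2)$, $\G_2 \mapsto (2,3)$, and $\G_3 \mapsto (2,4)$, a surjection onto $S_4$. Working modulo the squares $\G_i^2$ (as required by the final identification of $\pi_1(\Gal{X})$), we first observe that the quotient $G/\langle\G_i^2\rangle$ is a quotient of the Coxeter-type group on $\G_1, \G_1', \G_2, \G_3$ with the braid and commutation relations listed just before the lemma. So the generators satisfy $\G_i^2 = e$ and the braid relations $\langle a,b\rangle = e$ translate into $(ab)^3 = e$ for a braid pair and $(ab)^2=e$ (i.e.\ commutativity) for a commuting pair. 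Thus $\Ker\rho$ is the kernel of the map from this Coxeter-like group onto $S_4$.

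**Choice of transversal and Schreier generators.**
First I would fix a Schreier transversal for $S_4$ in terms of the images of $\G_1, \G_2, \G_3$ (i.e.\ $s_1=(1,2)$, $s_2=(2,3)$, $s_3=(2,4)$ generate $S_4$), choosing a word for each of the $24$ cosets. The Reidemeister--Schreier procedure then produces a generating set for $\Ker\rho$ consisting of elements of the form $t\cdot g\cdot \overline{tg}^{-1}$, where $t$ ranges over the transversal and $g$ over the generators. Modulo the $\G_i^2$ and the known relations, most of these Schreier generators collapse to the identity or to conjugates of one another, and the surviving independent generators should organize into the four stated expressions $(\G_1\G_2\G_3\G_2)^2$, $(\G_2\G_3\G_1\G_3)^2$, $(\G_1'\G_2\G_3\G_2)^2$, and $(\G_2\G_3\G_1'\G_3)^2$, together with their conjugates by coset representatives. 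I would verify directly that each listed element lies in $\Ker\rho$ by checking its image in $S_4$ is trivial: for instance $s_1 s_2 s_3 s_2 = (1,2)(2,3)(2,4)(2,3)$ should be an involution-free word mapping to identity after squaring, confirming membership.

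**Reduction using the extra generator $\G_1'$.**
The presence of $\G_1'$ alongside $\G_1$ (both mapping to $(1,2)$) is the genuinely nonstandard feature here, so the key reduction step is to use the relations $\langle\G_1',\G_2\rangle = \langle\G_3,\G_1'\rangle = e$ to express $\G_1'$-dependence in controlled form. These relations show $\G_1'$ interacts with $\G_2, \G_3$ exactly as $\G_1$ does in the relevant subwords, which explains why the four generators split into two parallel pairs (one built from $\G_1$, one from $\G_1'$). I expect the cleanest route is to show that once the $\G_1$-built generators $(\G_1\G_2\G_3\G_2)^2$ and $(\G_2\G_3\G_1\G_3)^2$ are accounted for, replacing $\G_1$ by $\G_1'$ produces exactly the other two, and that no further independent elements survive because $\G_1\G_1'$ lies in the kernel and its conjugacy class is absorbed.

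**Main obstacle.**
The hard part will be the bookkeeping in the Reidemeister--Schreier rewriting: showing that the $72$ raw Schreier generators (one for each of $24$ cosets times $3$ — or more, counting $\G_1'$) reduce, modulo the $\G_i^2$ and the braid/commutation relations, to precisely the four stated generators and their conjugates, with nothing left over. The delicate point is proving no additional independent relations in $\Ker\rho$ are needed and no additional generators escape; this requires carefully tracking how each Schreier generator rewrites under the defining relations, and confirming that the four expressions generate the full kernel rather than a proper subgroup. I would manage this by exploiting the symmetry between the $(\G_1,\G_2,\G_3)$ triangle relations to cut the casework roughly in half, treating the $\G_1'$ generators as a mirror image of the $\G_1$ generators.
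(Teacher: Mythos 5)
Your proposal does not prove the lemma; it defers precisely the step that \emph{is} the lemma. Reidemeister--Schreier does produce a generating set for $\Ker\rho$ (roughly $24\times 4$ Schreier generators), but the whole content of the statement is that these reduce, modulo the $\G_i^2$ and the defining relations, to products of the four listed elements. You assert this (``should organize into,'' ``I expect'') without carrying out a single rewriting, and you yourself flag the bookkeeping as ``the hard part.'' Moreover, even as described, your output would be the four elements \emph{together with their conjugates by coset representatives}, which is weaker than the lemma: generation by the four elements alone (not just normal generation) is what feeds the later conclusion $\Ker\rho/\langle\G_i^2\rangle\cong\mathbb{Z}^4$, so you would still owe an argument expressing those conjugates as words in the four. (A small slip along the way: $(1,2)(2,3)(2,4)(2,3)=(1,2)(3,4)$ is an involution, not the identity; it is its square that is trivial.)

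More seriously, your setup would make the computation fail. You replace $G/\langle\G_i^2\rangle$ by ``the Coxeter-type group on $\G_1,\G_1',\G_2,\G_3$ with the braid relations listed just before the lemma'' and then identify $\Ker\rho$ with the kernel of the map from \emph{that} group to $S_4$. But $G/\langle\G_i^2\rangle$ is a proper quotient of that Coxeter-type group, and for the Coxeter-type group itself the lemma is false: since none of the listed relations involves both $\G_1$ and $\G_1'$, that group splits as an amalgamated product of $G_1=\langle\G_1,\G_2,\G_3\rangle$ and $G_2=\langle\G_1',\G_2,\G_3\rangle$ (each the affine Coxeter group $\tilde{A}_2$) over $\langle\G_2,\G_3\rangle\cong S_3$, and Bass--Serre theory applied to $\rho$ shows its kernel is $\mathbb{Z}^2 * \mathbb{Z}^2 * F_3$, where $F_3$ is free of rank $3$. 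The four stated elements generate only the $\mathbb{Z}^2 * \mathbb{Z}^2$ part; they do not even normally generate this kernel, since killing them yields $S_4 *_{S_3} S_4$, which still surjects onto $S_4$ with infinite kernel. Hence no amount of Schreier rewriting using only those relations can terminate in the four generators: the remaining van Kampen relations, notably the projective relation \eq{Vproj} and the relations coupling $\G_1$ to $\G_1'$ through the eliminated generators $\G_2',\G_3'$, are indispensable. This is also where the paper's proof differs from yours in kind, not just in bookkeeping: it performs no coset enumeration at all, but applies the Rowen--Teicher--Vishne theorem on Coxeter covers of symmetric groups \cite[Theorem 2.3]{RTV} to the two sub-diagrams on $\{\G_1,\G_2,\G_3\}$ and $\{\G_1',\G_2,\G_3\}$ (three edges meeting at a vertex), which identifies each sub-kernel as $\mathbb{Z}^2$ generated by the corresponding pair of squared elements, and then combines the two diagrams. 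That structural input is the missing idea in your proposal.
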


\noindent \textbf{Proof of Lemma \ref{*lem}.}  Consider the groups $$G_1=\left\langle\G_1, \G_2, \G_3 |
\langle\G_1, \G_2\rangle, \langle\G_2, \G_3\rangle, \langle\G_3, \G_1\rangle, \G_i^2\right\rangle$$
$$\mbox{and}\ \ \ G_2=\left\langle\G_1', \G_2, \G_3 | \langle\G_1', \G_2\rangle, \langle\G_2, \G_3\rangle, \langle\G_3,
\G_1'\rangle, \G_i^2, \G_1'^2\right\rangle.$$

The maps $\rho$ from $G_1$ and $G_2$ to $S_4$ are given by the following two diagrams.  As in \cite{RTV}, we represent maps from Coxeter groups to $S_n$ by diagrams with $n$ vertices, such that two vertices $i$ and $j$ are connected
by an edge labelled $\Gamma$ if $\Gamma$ maps onto the transposition $(i,j).$  Note that if the edges $\Gamma_k$
and $\Gamma_l$ meet in a vertex, then $\langle \Gamma_k, \Gamma_l\rangle=e,$ and if $\G_k$ and $\G_l$ are
disjoint, then $[\G_k,\G_l]=e,$ since if $\G_k\mapsto (a,b)$ and $\G_l\mapsto (b,c),$ then $\langle (a,b),
(b,c)\rangle=e,$ and if $\G_k\mapsto (a,b)$ and $\G_l\mapsto (c,d),$ then $[(a,b),(c,d)]=e.$  The lemma thus
follows from \cite[Theorem 2.3, p. 4]{RTV}. \hspace{1cm} $\Box$

\begin{figure}[ht]
\epsfxsize=3cm 
\begin{minipage}{\textwidth}
\begin{center}
\epsfbox{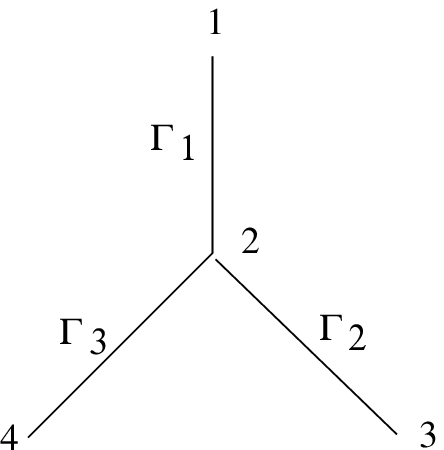}
\end{center}
\end{minipage}
\caption{}\label{Vmap1}
\end{figure}

\begin{figure}[ht]
\epsfxsize=3cm 
\begin{minipage}{\textwidth}
\begin{center}
\epsfbox{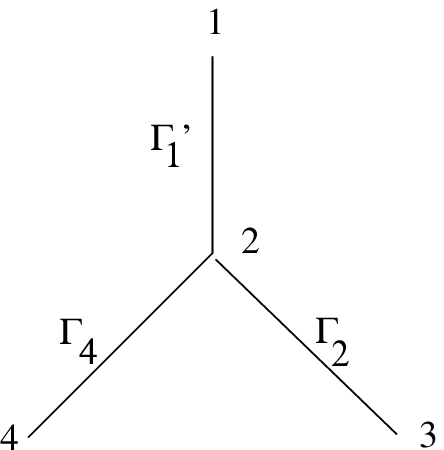}
\end{center}
\end{minipage}
\caption{}\label{Vmap2}
\end{figure}

Using the lemma, since the four generators commute modulo the $\G_i^2,$ it follows that $\Ker \rho
/\langle\G_i^2\rangle = \mathbb{Z}^4.$

\begin{remark}
As the branch curve is in fact a curve of degree 6 with nine cusps, the fundamental group of the complement of
the curve can also be deduced from Zariski \cite{zar}.
\end{remark}

\subsubsection{The embedding of $\C\P^1 \times \C\P^1$ as a quartic}\label{F0-12} Consider the degeneration of the
surface $\C\P^1 \times \C\P^1,$ embedded in $\mathbb{P}^5$ by the linear system $|\ell_1 + 2 \ell_2|,$ where
$\ell_1$ and $\ell_2$ are the pullbacks of the point classes from the two factors of $\C\P^1 \times \C\P^1$. The
degeneration is to a union of four planes depicted in Figure \ref{(1,2)}.
\begin{figure}[ht]
\epsfysize=2cm 
\begin{minipage}{\textwidth}
\begin{center}
\epsfbox{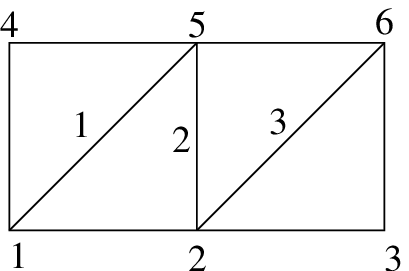}
\end{center}
\end{minipage}
\caption{The $(1,2)$-degeneration of $\C\P^1 \times \C\P^1$}\label{(1,2)}
\end{figure}

\begin{theorem}
The fundamental group of the Galois cover of $\C\P^1 \times \C\P^1$ in the $(1,2)$-embedding is trivial.
\end{theorem}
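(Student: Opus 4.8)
The plan is to follow the same three-step method used for the earlier cubic and quartic cases. First I would read the degeneration and regeneration off Figure \ref{(1,2)}: the branch curve $S_0$ of the four-plane arrangement is a line configuration in $\C\P^2$, and each of its singular points is the projection of a node or a tangency. Applying the regeneration rules of \cite{19} together with the regeneration lemmas of \cite{17}---each doubled line becoming a conic, each tangency regenerating into three cusps, each node into a pair of nodes---I would write the braid monodromy factorization of the regenerated branch curve $S$ as an ordered product of half-twists $Z_{i\,i'}$ (branch points), $Z^2$-type factors (nodes) and $Z^3$-type factors (cusps), exactly as was done in Figure \ref{trap3-braids} for $F_1$ and in the triple-point computation.

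Next I would feed this factorization into the van Kampen theorem. The branch-point factors $Z_{i\,i'}$ yield the identifications $\G_i = \G_i'$; the cusp factors yield braid (triple) relations; and the node factors yield commutation relations. After eliminating the primed generators I expect a presentation of $\pi_1(\C\P^2 - \bar{S})$ on generators $\G_i$ that map, under the geometric monodromy, to the transpositions of $S_4$. Because the answer is claimed to be \emph{trivial} (in contrast with the Veronese case, where two generators mapped to the same transposition and left a $\Z^4$ kernel), I expect here that the generators hit the three standard adjacent transpositions $s_1, s_2, s_3$ without repetition, so that the relations reduce to braid relations $\langle \G_i, \G_{i+1}\rangle = e$ between adjacent generators, commutation between non-adjacent ones, an extra relation built from even powers $\G_i^{\pm 2}$ (the analogue of $\G_1^{-2}\G_2\G_1^2 = \G_3^{-2}\G_2\G_3^2$ in the $F_2$ case), and the projective relation $\prod_j \G_j = e$. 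The bulk of the labor is the algebraic simplification of the raw van Kampen relations into this canonical form.

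Finally, by the Moishezon-Teicher exact sequence (\ref{M-T}), the map $\G_i \mapsto s_i$ exhibits $S_4$ as a quotient of $\pi_1(\C\P^2 - \bar{S})$ with kernel $K$, and $\pi_1(\Gal{X}) = K / \langle \G_i^2 \rangle$. To prove triviality it suffices to show that adjoining the relations $\G_i^2 = e$ to the presentation collapses it exactly onto the Coxeter presentation $\langle s_1, s_2, s_3 \mid s_i^2,\ \langle s_i, s_{i+1}\rangle,\ [s_1, s_3]\rangle$ of $S_4$; then $K$ is precisely the normal closure of the squares and $\pi_1(\Gal{X})$ is trivial. The decisive point is that, once $\G_i^2 = e$ holds, the braid relations become the Coxeter braid relations, the commutation relations are subsumed, and the extra relation---being a product of squares---degenerates to a tautology.

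The step I expect to be the genuine obstacle is twofold. First, the regeneration bookkeeping must be exact: a single misplaced conjugating braid would alter which products lie in $K$ and could change the answer, and the $\Z^4$ outcome for the Veronese shows how sensitive the final group is to this. Second, and more importantly, one must verify that $\pi_1(\C\P^2 - \bar{S})/\langle \G_i^2\rangle$ is \emph{exactly} $S_4$---equivalently, that the surjection onto $S_4$ is an isomorphism---rather than a strictly larger group admitting $S_4$ only as a proper quotient; in the latter case $K$ would strictly contain the normal closure of the squares and $\pi_1(\Gal{X})$ would be nontrivial. I would carry out this final check by matching the reduced presentation relation-by-relation against the explicit Coxeter presentation of $S_4$ above.
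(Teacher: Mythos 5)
Your proposal is correct and follows essentially the same route as the paper: the paper regenerates the two tangency vertices and the two conic vertices, applies van Kampen together with the parasitic-intersection relations, derives exactly the relations you anticipate (in particular $\G_2'=\G_1^{-2}\G_2\G_1^2=\G_3^{-2}\G_2\G_3^2$, the analogue of the $F_2$ relation), and then shows that $G/\langle\G_i^2\rangle$ collapses to the Coxeter presentation of $S_4$, so the surjection onto $S_4$ is an isomorphism and the kernel $\pi_1(\Gal{X})=K/\langle\G_i^2\rangle$ is trivial. You also correctly isolated the decisive point---that the quotient by the squares must be \emph{exactly} $S_4$ rather than a proper extension of it---which is precisely the step the paper's computation settles.
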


\begin{proof}
The branch curve of the degeneration consists of three lines meeting at two different vertices.

Vertex $1$ (respectively, $6$) regenerates to a conic and gives rise to the trivial braid $Z_{1,1'}$ (resp.
$Z_{6,6'}$), and hence to the relations
\begin{tiny}
\begin{eqnarray}
{}\G_1=\G_1',\label{12a}\\
{}\G_3=\G_3'.\label{12b}
\end{eqnarray}
\end{tiny}

Vertex $5$ (resp. $2$) regenerates to a line $2$ tangent to a conic (1,1') (resp., (3,3'), as in Figure
\ref{(12v5)} (resp., \ref{(12v2)}), giving rise to the braid monodromy factors $Z^3_{1', 2\;2'},$
$\left(Z_{1\;1'}\right)^{Z^2_{2\;2'}}$ and $\left(Z_{1\;1'}\right)^{Z^2_{1', 2\;2'}},$ (resp., $Z^3_{2\;2', 3},$
$\left(Z^2_{3\;3'}\right)^{Z_{2\;3}^2},$ and $\left(Z_{3\;3'}\right)^{Z_{2\;2', 3}^2}$), and the relations
\begin{tiny}\begin{eqnarray}
{}&&\langle\G_1', \G_2\rangle = \langle\G_1', \G_2'\rangle = \langle\G_1', \G_2^{-1}\G_2'\G_2\rangle = e,\label{12c}\\
{}&&\G_1=\G_2'\G_2\G_1'\G_2^{-1}\G_2'^{-1}, \label{12e}\\
{}&&\langle\G_2, \G_3\rangle = \langle\G_2', \G_3\rangle = \langle\G_2^{-1}\G_2'\G_2, \G_3\rangle = e,\label{12d}\\
{}&&\G_3'=\G_3\G_2'\G_2\G_3\G_2^{-1}\G_2'^{-1}\G_3^{-1}.\label{12f}
\end{eqnarray}\end{tiny}

\begin{figure}[ht]
\epsfysize=1.7cm 
\begin{minipage}{\textwidth}
\begin{center}
\epsfbox{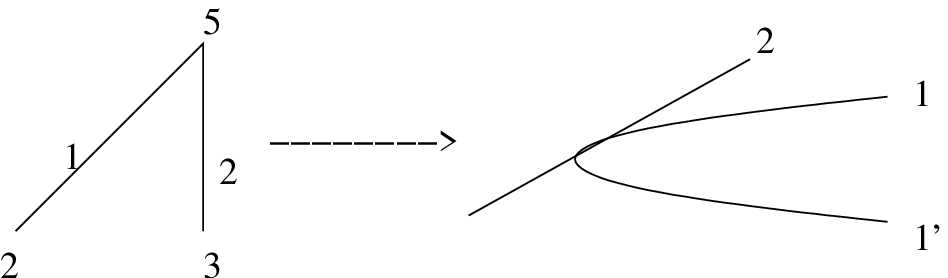}
\end{center}
\end{minipage}
\caption{Regeneration of Vertex 5}\label{(12v5)}
\end{figure}

\begin{figure}[ht]
\epsfysize=2cm 
\begin{minipage}{\textwidth}
\begin{center}
\epsfbox{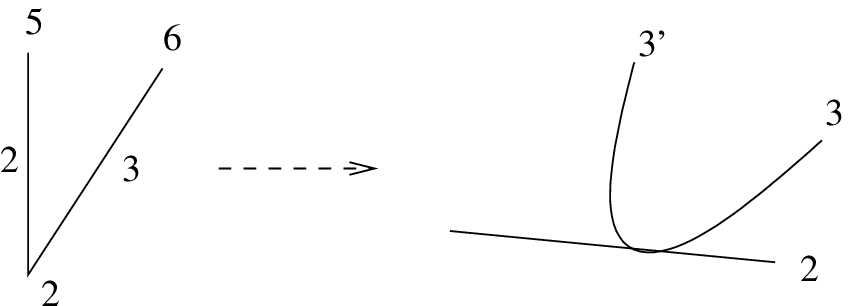}
\end{center}
\end{minipage}
\caption{Regeneration of Vertex 2}\label{(12v2)}
\end{figure}

We have also intersections that arise from lines that did not meet in the plane arrangement, but their images
meet in $\mathbb{CP}^2$. We call these parasitic intersections. Explanation and configuration of how to
construct the braids that correspond to these intersections, appear in \cite[p. 616]{15}.

They give rise to the following relations:
\begin{tiny}\begin{eqnarray}
{}[\G_2'\G_2\G_1'\G_2^{-1}\G_2'^{-1}, \G_3] &=& e, \label{12g}\\
{}[\G_2'\G_2\G_1'\G_2^{-1}\G_2'^{-1}, \G_3^{-1}\G_3'\G_3] &=& e, \label{12h}\\
{}[\G_2'\G_2\G_1'\G_1\G_1'^{-1}\G_2^{-1}\G_2'^{-1}, \G_3] &=& e, \label{12i}\\
{}[\G_2'\G_2\G_1'\G_1\G_1'^{-1}\G_2^{-1}\G_2'^{-1}, \G_3^{-1}\G_3'\G_3] &=& e. \label{12j}
\end{eqnarray}\end{tiny}

>From (\ref{12a}) and (\ref{12b}), we see that the group $\pi_1(\C\P^2 \setminus S)$ is generated by $\G_1,$
$\G_2,$ $\G_2'$ and $\G_3.$  Substituting into (\ref{12e}) and (\ref{12f}), we obtain
$$\G_1=\G_2'\G_2\G_1\G_2^{-1}\G_2'^{-1} \ \ \mbox{and} \ \ \G_3=\G_2'\G_2\G_3\G_2^{-1}\G_2^{-1},$$ or in other words
\begin{equation}\label{12k}
[\G_2'\G_2, \G_3]=[\G_2'\G_2, \G_1]=e.\end{equation} Hence, (\ref{12g}) reduces to \ $[\G_1, \G_3]=e,$ and
likewise (\ref{12h}), (\ref{12i}) and (\ref{12j}).

Applying (\ref{12k}), we get \ $\G_2'^{-1}\G_1\G_2'=\G_2\G_1\G_2^{-1},$ and hence
$\G_1\G_2'\G_1^{-1}=\G_1^{-1}\G_2\G_1,$ which implies that $\G_2'=\G_1^{-2}\G_2\G_1^2.$ Likewise,
$\G_2'=\G_3^{-2}\G_2\G_3^2.$

Hence, the image of $\G_2'$ in the quotient $G/\langle\G_1^2, \G_2^2, \G_3^2\rangle$ is equal to the image of
$\G_2.$ Thus $G/\langle\G_1^2, \G_2^2, \G_3^2\rangle \cong \left \langle\G_1, \G_2,
\G_3|\langle\G_1,\G_2\rangle=\langle\G_2, \G_3\rangle, [\G_1,\G_3], \G_1^2, \G_2^2, \G_3^2\right \rangle \cong
S_4.$  Hence, since the projection of the fundamental group of the branch curve complement to $S_4$ is onto, it
is an isomorphism; hence the fundamental group of $\Gal{X},$ which is the kernel of this map, is trivial.
\end{proof}

\subsubsection{The union of a cubic degeneration and a plane}\label{Cayley+} Consider a smooth quartic surface that degenerates
to a union of three planes meeting at a point, and one plane not passing through that point, as shown in Figure
\ref{ozen-haman+}.  For example, if the ideal of the degenerated surface is $(x,y)(x,z)(x,w)(w,y)$ or \begin{tiny} $(x^3w, x^3u,
x^2w^2, x^2wu, x^2zw, x^2zu, xzw^2, xzwu, x^2yw, x^2yu, xyw^2, xywu, xyzw, xyzu, yzw^2, yzwu),$\end{tiny} then one can
check by explicit computation that a generic deformation of this surface, such as
\begin{tiny} $(x^3w+t(x^4-y^4+z^4+w^4+u^4), x^3u+t(x^4+y^4-z^4+w^4+u^4), x^2w^2+t(x^4+y^4+z^4-w^4+u^4),
x^2wu+t(x^4+y^4+z^4+w^4-u^4), x^2zw+t(2x^4+y^4+z^4+w^4+u^4), x^2zu+t(x^4+2y^4+z^4+w^4+u^4),
xzw^2+t(x^4+y^4+2z^4+w^4+u^4), xzwu+t(x^4+y^4+z^4+2w^4+u^4), x^2yw+t(x^4+y^4+z^4+w^4+2u^4),
x^2yu-t(x^4-3*y^4+z^4+w^4+u^4), xyw^2+t(x^4+y^4-3z^4+w^4+u^4), xywu-t(x^4+y^4+z^4-3w^4+u^4),
xyzw-t(x^4+y^4+z^4+w^4-3u^4), xyzu-t(x^4+4y^4+z^4+w^4+u^4), yzw^2-t(x^4+y^4+4z^4+w^4+u^4),
yzwu-t(x^4+y^4+4z^4+w^4+u^4),$ \end{tiny} is smooth.

\begin{figure}[ht]
\epsfysize=2cm 
\begin{minipage}{\textwidth}
\begin{center}
\epsfbox{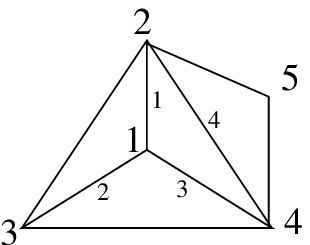}
\end{center}
\end{minipage}
\caption{Degeneration to three planes with a triple point and one other plane}\label{ozen-haman+}
\end{figure}

\begin{theorem}
The fundamental group of the Galois cover of this surface is $\mathbb{Z}^6 \semidirect \mathbb{Z}_2^2$.
\end{theorem}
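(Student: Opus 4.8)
The plan is to carry out the same three-step program used throughout Section \ref{small}, importing the triple-point computation of Subsection \ref{Cayley} and adjoining the contribution of the fourth plane.

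First I would produce the braid monodromy factorization of the branch curve $S$. The degeneration in Figure \ref{ozen-haman+} is the Cayley triple point (planes $1,2,3$) together with a fourth plane glued to it along a single edge, so around the triple point the local braid monodromy is exactly the Cayley factorization (\ref{del1}), while the new edge regenerates as a tangency (three cusps) together with its branch points, precisely as in the $F_1$ and $F_2$ computations of Subsections \ref{F1} and \ref{F2}. Each of the four intersection lines regenerates to a conic, so the branch curve carries eight geometric generators $\G_1,\G_1',\dots,\G_4,\G_4'$ and $S$ is a curve of degree $8$; the remaining crossings of the fourth line with the lines $2,3$ of the triple point are parasitic intersections, contributing commuting node braids $Z^2_{i\;i',4\;4'}$ as described in \cite[p.~616]{15}. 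Assembling the Cayley factors, the tangency and branch factors of plane $4$, and the parasitic factors expresses $\Delta^2_8$ as the required product.

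Second, I would apply the van Kampen theorem to this factorization. The branch points give $\G_i=\G_i'$ for the lines regenerating to smooth conics, the cusps give braid relations $\langle\cdot,\cdot\rangle=e$, and the parasitic nodes give commutators $[\cdot,\cdot]=e$, exactly as in the $(1,2)$ computation of Subsection \ref{F0-12}. Imposing the projective relation $\prod_j\G_j=e$ and simplifying the conjugation relations around the triple point (as in Subsection \ref{Cayley}) and around plane $4$ should reduce this to a manageable presentation of $G=\pi_1(\C\P^2-\bar S)$ on the generators $\G_1,\G_2,\G_3,\G_4$ (together with one or two surviving primed generators), each $\G_i$ mapping to the transposition generating $S_4$ that corresponds to its line.

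Third, I would invoke the Moishezon-Teicher exact sequence (\ref{M-T}) with $n=4$: the map $\rho\colon G\to S_4$ sends each $\G_i$ to its transposition, and $\pi_1(\Gal X)=\Ker\rho/\langle\G_i^2\rangle$. To identify this quotient I would use the diagrammatic method of \cite{RTV} exactly as in Lemma \ref{*lem}: the edges of the graph lying on the triple point reproduce a copy of $\mathbb{Z}_2^2$, as in Subsection \ref{Cayley}, while the cycles running through the fourth edge produce six mutually commuting generators of infinite order, squared products of the form $(\G_i\G_4\cdots)^2$ and their conjugates, giving a free abelian group $\mathbb{Z}^6$. The main obstacle will be the final identification of $\Ker\rho/\langle\G_i^2\rangle$ as the semidirect product $\mathbb{Z}^6\semidirect\mathbb{Z}_2^2$ rather than a direct product or some other extension. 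Concretely I would (i) show that the six squared commuting elements generate a normal, torsion-free, rank-$6$ subgroup $N\cong\mathbb{Z}^6$, verifying the rank by abelianizing and counting independent relations as in the Veronese case of Subsection \ref{V4}; (ii) exhibit the residual $\mathbb{Z}_2^2$ (the image of the triple-point structure, generated by a $\G_i\G_j'$ as in Subsection \ref{Cayley}) as a complement, giving the splitting; and (iii) compute the conjugation action of this $\mathbb{Z}_2^2$ on $N$ and check that it is nontrivial, which is exactly what forces the semidirect rather than direct product. Step (iii) is the delicate point: the commutativity of the six generators and the rank count follow routinely from the braid relations, but controlling the conjugation action and confirming that the Reidemeister-Schreier rewriting over $S_4$ introduces no further collapse requires careful bookkeeping of the coset representatives.
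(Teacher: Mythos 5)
Your overall strategy---import the Cayley triple-point factorization, add the contribution of the fourth plane, apply van Kampen, and finish with the Moishezon--Teicher sequence and the diagrammatic method of \cite{RTV}---is exactly the paper's. But your combinatorial setup of the regeneration is wrong, and the error is not cosmetic: carried out literally, it produces a different group. In the triangulation of Figure \ref{ozen-haman+} the new edge must join two \emph{existing} vertices, namely the free ends of two of the three triple-point lines (lines $1$ and $3$ in the paper's labelling); an edge of a triangulation cannot have a free endpoint. Consequently: (a) there are \emph{two} tangencies, not one---the conic $(4,4')$ is tangent to line $1'$ at one endpoint and to line $3'$ at the other (the paper's factors $\Delta_2$ and $\Delta_4$), giving six cusps; (b) the only parasitic intersections are those of line $4$ with line $2$, i.e.\ the four nodes $Z^2_{24}, Z^2_{2'4}, Z^2_{24'}, Z^2_{2'4'}$, not with lines $2$ \emph{and} $3$ as you state; and (c) you may not use (\ref{del1}) ``exactly'': since lines $1$ and $3$ no longer terminate in free ends, their branch-point factors $Z_{1\,1'}$ and $Z_{3\,3'}$ must be deleted, which is why the paper uses the truncated factorization (\ref{del-haman+1}) (denoted $\widetilde{\Delta}$ in \cite{Cayley}) rather than (\ref{del1}).

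Here is why this matters for the answer. Your setup imposes $\G_1=\G_1'$ and $\G_3=\G_3'$ (from the spurious branch points of conics $1$ and $3$), and the branch points you attach to conic $4$, combined with the tangency braid relations and a Lemma \ref{keylem}-type argument, would force $\G_4=\G_4'$ modulo the squares. In the correct presentation none of these identifications hold: only $\G_2=\G_2'$ survives (\ref{Cayley+55}), while (\ref{Cayley+62}) and (\ref{Cayley+64}) merely express $\G_1,\G_3$ as conjugates of $\G_1',\G_3'$ by $\G_4'\G_4$. The survival of $\G_4$ and $\G_4'$ as distinct generators is precisely what puts two parallel edges between vertices $3$ and $4$ in the diagram of Figure \ref{2-graphs}, hence two independent families $(\G_1'\G_3'\G_4'\G_3')^2$ and $(\G_1'\G_3'\G_4\G_3')^2$ (three conjugates each), giving $\mathbb{Z}^3\oplus\mathbb{Z}^3\cong\mathbb{Z}^6$; your collapsed diagram has a single edge between those vertices and would yield only $\mathbb{Z}^3\semidirect\mathbb{Z}_2^2$. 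A smaller point of attribution: in the paper's use of \cite{RTV} the $\mathbb{Z}^6$ does not come from ``cycles through the fourth edge'' but from the relation for several edges meeting at a vertex, while the unique cycle of the diagram is the triangle of triple-point lines and is the source of the $\mathbb{Z}_2^2$ (its generators have order $2$ because of the extra commutation relation $[\G_1',\G_2'^{-1}\G_3'\G_2']=e$). Your steps (i)--(iii) on verifying the semidirect structure are sensible, and in fact more careful than the paper, which essentially reads the structure off the generators; but they cannot rescue the computation unless the presentation they are applied to is the correct one.
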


\begin{proof}
 We break up the braid monodromy factorization into three components $\Delta_1,$ $\Delta_2$ and $\Delta_4.$
The factor $\Delta_1$ corresponding to the triple point $1$ in \ref{ozen-haman+} is computed in \cite{Cayley}
(where it is denoted $\widetilde{\Delta})$ as follows.

 \begin{eqnarray} \label{del-haman+1}
 \Delta_1 = & {(Z_{1' \; 3}^2)}^{Z^{-2}_{2', 3 \; 3'}} \cdot (Z_{1' \; 3'}^2)^{Z_{1' \; 3}^2 Z^{-2}_{2', 3 \;
3'}} \cdot {(Z_{1 \; 3}^2)}^{Z^{-2}_{2', 3 \; 3'}} \cdot (Z_{1 \; 3'}^2)^{Z_{1 \; 3}^2 Z^{-2}_{2', 3 \; 3'}}
\\ & \cdot {(Z_{2 \; 2'})}^{Z^{-2}_{1 \; 1', 2} \bar{Z}^2_{2, 3 \; 3'}} \cdot {(Z^3_{2, 3 \; 3'})}^ {Z^2_{2 \;
2'}} \cdot  Z^3_{1 \; 1', 2'} \cdot Z_{2 \; 2'} \nonumber
\end{eqnarray}

Note that the first, the fourth and the last two paths correspond to braids of branch points. The second and
third paths correspond to braids of cusps, and the rest correspond to braids of nodes.

\begin{figure}[ht]
\epsfxsize=6cm 
\begin{minipage}{\textwidth}
\begin{center}
\epsfbox{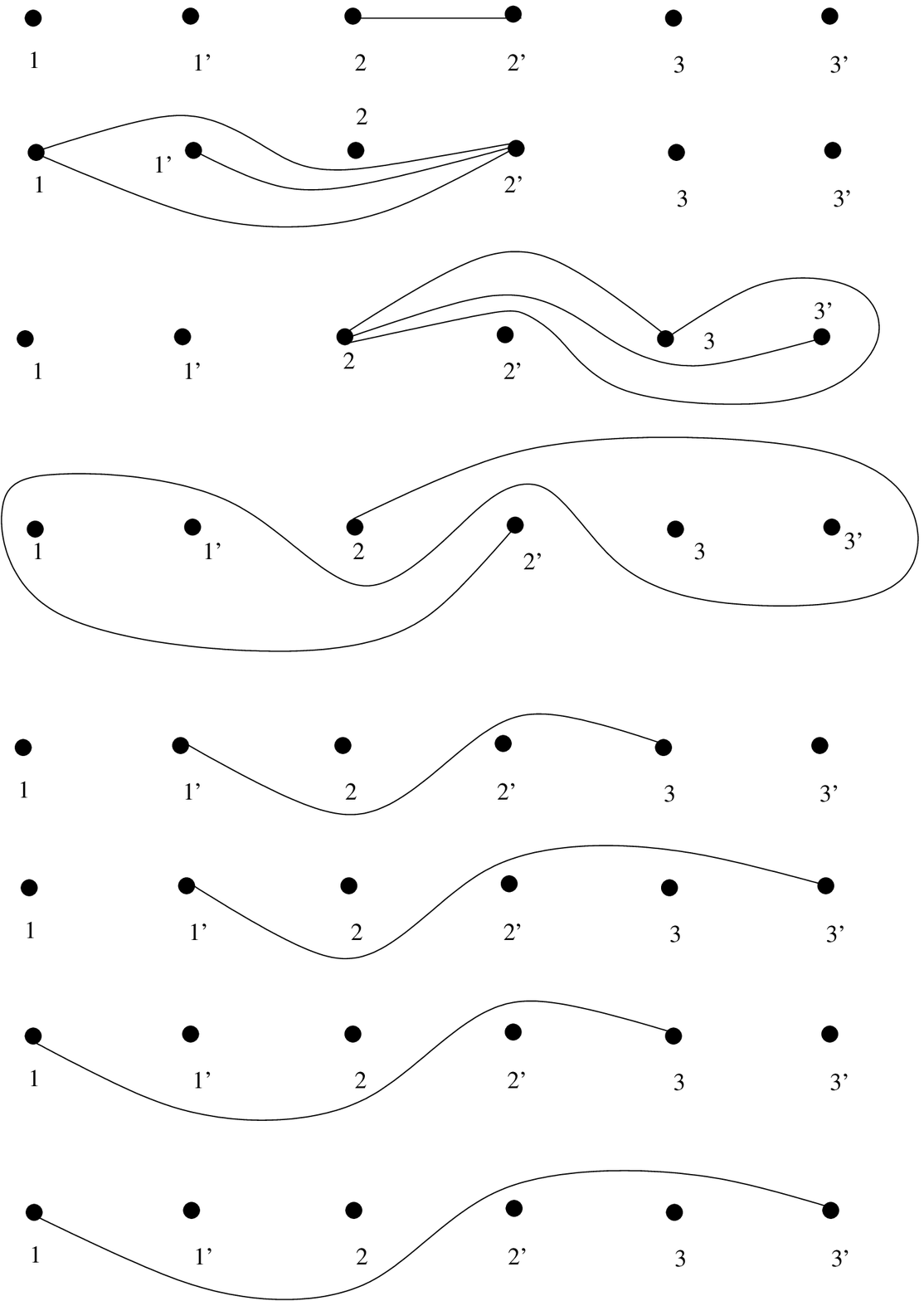}
\end{center}
\end{minipage}
\caption{$\Delta_1$ braids from (\ref{del-haman+1})}\label{Cayley+del1}
\end{figure}

$\Delta_1$ thus gives rise to the following relations on the generators of the fundamental group:
\begin{tiny}\begin{eqnarray}
{}\G_2 & = & \G_2', \label{Cayley+55} \\
{}\langle\G_1, \G_2'\rangle = \langle \G_1', \G_2'\rangle = \langle\G_1' \G_1 \G_1'^{-1}, \G_2'\rangle &=& e ,\label{Cayley+56}\\
{}\langle\G_2' \G_2 \G_2'^{-1}, \G_3\rangle = \langle\G_2' \G_2 \G_2'^{-1}, \G_3'\rangle =
\langle\G_2'\G_2\G_2'^{-1}, \G_3'\G_3\G_3'^{-1} \rangle & = & e, \label{Cayley+57}\\
{}\G_1^{-1} \G_1'^{-1} \G_2'^{-1} \G_3' \G_3 \G_2' \G_2 \G_2'^{-1} \G_3^{-1} \G_3'^{-1}
\G_2' \G_1' \G_1 & = & \G_2', \\
{}[\G_1', \G_2'^{-1} \G_3 \G_2'] = [\G_1', \G_2'^{-1} \G_3^{-1} \G_3' \G_3 \G_2'] & = & e, \label{Cayley+59}\\
{}[\G_1, \G_2'^{-1} \G_3 \G_2'] = [\G_1,\G_2'^{-1} \G_3^{-1} \G_3' \G_3 \G_2'] & = & e.
\end{eqnarray}\end{tiny}

Vertices $2$ and $4$ are equivalent, and are both equivalent to Vertex 5 of the (1,2) degeneration of
$\mathbb{CP}^1 \times \mathbb{CP}^1,$ as depicted in Figure \ref{(12v5)}.

The factor $\Delta_2$ is thus as given in Figure \ref{Cayley+del2},
\begin{figure}[h]
\epsfxsize=5cm 
\begin{minipage}{\textwidth}
\begin{center}
\epsfbox{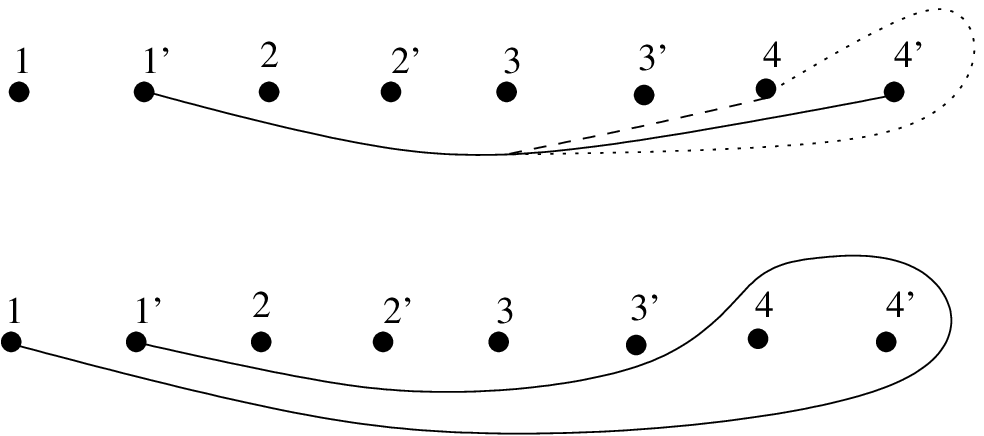}
\end{center}
\end{minipage}
\caption{$Z^3_{1', 44'} \cdot (Z_{11'}^{Z_{1',44'}^2})$}\label{Cayley+del2}
\end{figure}
and gives rise to the relations \begin{tiny}\begin{eqnarray}
{}&&\langle\G_1', \G_4\rangle = \langle \G_1', \G_4'\rangle = \langle\G_1', \G_4^{-1}\G_4'\G_4\rangle = e, \label{Cayley+61}\\
{}&&\G_1=\G_4'\G_4\G_1'\G_4^{-1}\G_4'^{-1}, \label{Cayley+62}
\end{eqnarray}\end{tiny}
while $\Delta_4$ is given in Figure \ref{Cayley+del3},
\begin{figure}[h]
\epsfxsize=5cm 
\begin{minipage}{\textwidth}
\begin{center}
\epsfbox{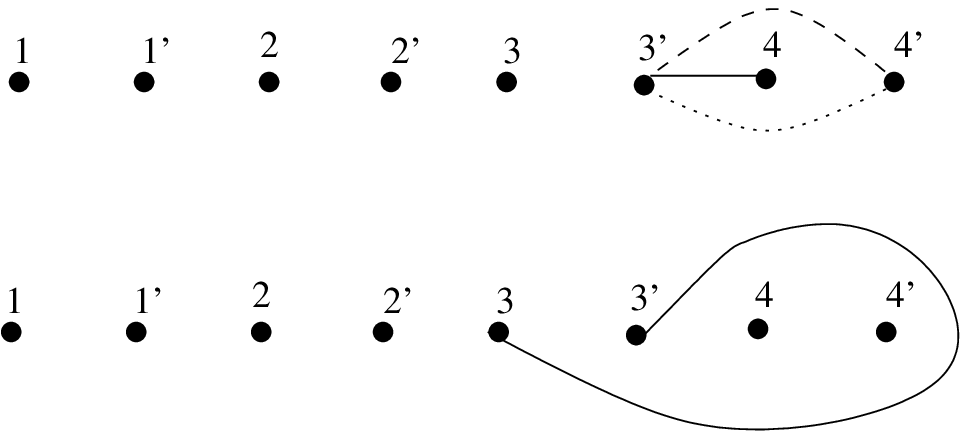}
\end{center}
\end{minipage}
\caption{$Z^3_{3', 44'}\cdot (Z_{33'}^{Z_{3',44'}}),$}\label{Cayley+del3}
\end{figure}
and gives rise to the relations \begin{tiny}\begin{eqnarray}
{}&&\langle\G_3', \G_4\rangle = \langle\G_3', \G_4'\rangle = \langle\G_3', \G_4^{-1}\G_4'\G_4\rangle = e, \label{Cayley+63}\\
{}&&\G_3=\G_4'\G_4\G_3'\G_4^{-1}\G_4'^{-1}. \label{Cayley+64}
\end{eqnarray}\end{tiny}

We also have the parasitic and projective relations.

The parasitic braids are explained in \cite[p. 616]{15}, and are
shown in Figure \ref{Cayley+parasitic}.

\begin{figure}[h]
\epsfxsize=5cm 
\begin{minipage}{\textwidth}
\begin{center}
\epsfbox{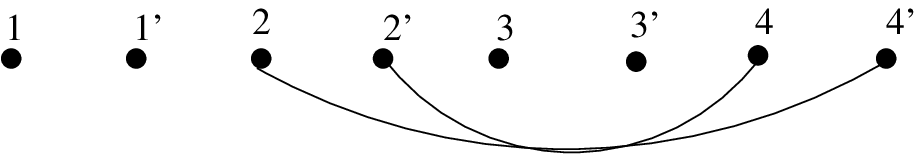}
\end{center}
\end{minipage}
\caption{$Z^2_{24},$ $Z^2_{2'4},$ $Z^2_{24'}$ and $Z^2_{2'4'}$}\label{Cayley+parasitic}
\end{figure}

The relations are \begin{tiny}\begin{eqnarray}
{}&&[\G_2, \G_4] = e, \label{Cayley+65}\\
{}&&[\G_2',\G_4] = e, \label{Cayley+66}\\
{}&&[\G_2, \G_4'] = e, \label{Cayley+67}\\
{}&&[\G_2',\G_4'] = e. \label{Cayley+68}
\end{eqnarray}\end{tiny}

The projective relation is \begin{tiny}\begin{equation*}\G_4'\G_4\G_3'\G_3\G_2'\G_2\G_1'\G_1
=e.\end{equation*}\end{tiny}

By (\ref{Cayley+62}), (\ref{Cayley+64}) and (\ref{Cayley+55}), it is clear that the group is generated by the
five generators $\G_1',$ $\G_2',$ $\G_3',$ $\G_4',$ and $\G_4,$ where the defining relations are
\begin{tiny}
\begin{eqnarray*}
{}\langle \G_1',\G_2'\rangle &=& e,\ \ \mbox{(\ref{Cayley+56})}\\
{}\langle \G_2',\G_3'\rangle &=& e,\ \ \mbox{from (\ref{Cayley+57}) and (\ref{Cayley+55})}\\
{}[\G_2',\G_4] &=& e,\ \ \mbox{(\ref{Cayley+66})}\\
{}[\G_2',\G_4'] &=& e,\ \ \mbox{(\ref{Cayley+68})}\\
{}\langle \G_1',\G_4'\rangle &=& e,\ \ \mbox{(\ref{Cayley+61})}\\
{}\langle \G_1',\G_4\rangle &=& e,\ \ \mbox{(\ref{Cayley+61})}\\
{}\langle \G_3',\G_4' \rangle &=& e,\ \ \mbox{(\ref{Cayley+63})}\\
{}\langle\G_3',\G_4 \rangle &=& e,\ \ \mbox{(\ref{Cayley+63})}\\
{}[\G_1', \G_2'^{-1}\G_3'\G_2'] &=& e. \ \ \mbox{(\ref{Cayley+59})}
\end{eqnarray*}
\end{tiny}

We map the generators \begin{equation*} \G_1' \mapsto (1,3), \  \G_2'\mapsto (1,2), \ \G_4'\mapsto (3,4), \ \G_4
\mapsto (3,4), \ \G_3' \mapsto (2,3),
\end{equation*} according to the configuration in Figure \ref{ozen-haman+}.

The diagrams (according to the convention of \cite{RTV}, given in \ref{V4}) for the map to $S_4$ are thus as
follows, and we have the additional relation $[\G_a', \G_2'^{-1}\G_3'\G_2']=e$, see Figure \ref{2-graphs}.

\begin{figure}[h]
\epsfxsize=9cm 
\begin{minipage}{\textwidth}
\begin{center}
\epsfbox{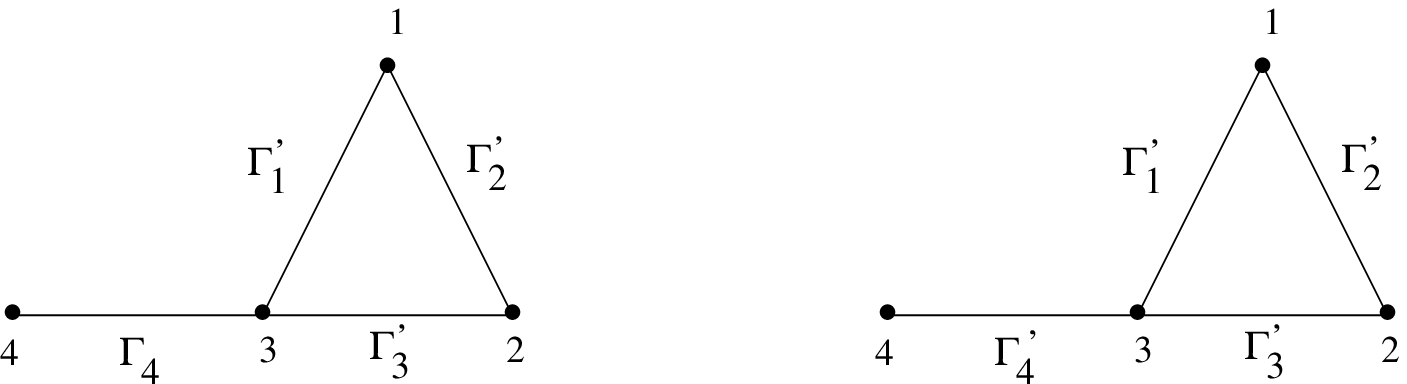}
\end{center}
\end{minipage}
\caption{}\label{2-graphs}
\end{figure}

Hence, the by \cite{RTV}[p.3], the kernel of the map to $S_4$ is generated by $\G_1'\G_2'^{-1}\G_3'\G_2, \, \G_2'\G_3'^{-1}\G_1'\G_3', \\ \G_3'\G_1'^{-1}\G_2'\G_1,$ arising from the cycle $\G_1', \, \G_2', \, \G_3',$  and by the conjugates of $(\G_1'\G_3'\G_4'\G_3')^2$ and $(\G_1'\G_3\G_4\G_3')^2,$ arising from the relation involving $3$ edges meeting at a vertex (relation (4) in \cite{RTV}[p.4]).

Since the cycle only involves the vertices $1,$ $2,$ and $3,$ and since $S_3$ is generated by two transpositions, it is enough to take the preimages of two transpositions that arise from the cyclic relation.  In particular, all elements of $K$ that arise from the cyclic relation are generated by $\G_1'\G_2'^{-1}\G_3'\G_2$ and $\G_2'\G_3'^{-1}\G_1'\G_3'$ and their conjugates.  Since in $G/\langle \G_i^2\rangle$ we have $\G_1'^2=(\G_2'^{-1}\G_3'\G_2)^2=(\G_2'\G_3'\G_2')^2=e,$ it follows from $[\G_a', \G_2'^{-1}\G_3'\G_2']=e$ that $(\G_1'\G_2'\G_3'\G_2')^2=e.$  Hence $\G_1'\G_2'\G_3'\G_2',$ and its conjugate $\G_2'\G_3'\G_1'\G_3',$ have order $2$ in $G/\langle \G_i^2\rangle.$  Hence, since these two elements commute, the subgroup of $K$ generated by these two elements is $\mathbb{Z}_2 \oplus \mathbb{Z}_2.$

The generators of $K$ that arise from three edges meeting in a vertex, and hence involve four vertices, are $(\G_1'\G_3'\G_4'\G_3')^2,$ $(\G_1'\G_3'\G_4\G_3')^2$ and their conjugates.  Since $S_4$ is generated by three transpositions, we need to take three conjugates of each of these elements. Hence, the group $K$ is isomorphic to $\mathbb{Z}^3 \oplus \mathbb{Z}^3 \semidirect \mathbb{Z}_2^2,$ or $\mathbb{Z}^6\semidirect \mathbb{Z}_2^2.$

\forget
Define the following set of new generators:
\begin{equation*}
s_1 := \G_1', \ \ s_2 := \G_2', \ \ s_4 := \G_4', \ \ s_4' := \G_4, \ \ s_3 :=  \G_2'^{-1}\G_3'\G_2'.
\end{equation*}

Then the map to $S_4$ is as follows:
\begin{equation*}
s_1 \mapsto (1,3), \ \ s_4 \mapsto (3,4), \ \ s_3\mapsto (1,3), \ \ s_4' \mapsto (3,4), \ \ s_2 \mapsto (1,2),
\end{equation*}
and the relations in $G/\langle \G_i^2\rangle$ are as follows: \begin{tiny}\begin{eqnarray}
{} s_i^2 &=& e,\ \ \mbox{from $\G_i^2=e$,}\label{Cayley+1}\\
{}\langle s_1,s_2\rangle &=& e,\ \ \mbox{from (\ref{Cayley+56}),} \label{Cayley+2}\\
{}\langle s_2,s_2^{-1}s_3s_2\rangle = \langle s_2, s_3 \rangle &=& e,\ \ \mbox{(from \ref{Cayley+57}) and (\ref{Cayley+55}),} \label{Cayley+3}\\
{}[s_2,s_4] &=& e,\ \ \mbox{from (\ref{Cayley+68}),} \label{Cayley+4}\\
{}[s_2,s_4'] &=& e,\ \ \mbox{from (\ref{Cayley+66}),}\\
{}\langle s_2^{-1}s_3s_2,s_4 \rangle &=& e,\ \ \mbox{from (\ref{Cayley+63}), which is equivalent to}\\
{}\langle s_3, s_2s_4s_2^{-1} \rangle &=& e,\ \ \mbox{so by (\ref{Cayley+4}) we obtain}\\
{}\langle s_3, s_4 \rangle &=& e. \ \ \mbox{Likewise,}\label{Cayley+5}\\
{}\langle s_3,s_4' \rangle &=& e,\ \ \mbox{from (\ref{Cayley+63}), and finally}\label{Cayley+6}\\
{}[s_1, s_3] &=& e, \ \ \mbox{from (\ref{Cayley+59}).}\label{Cayley+7}
\end{eqnarray}\end{tiny}
Hence, we have the following presentation for $G / <\G_i^2>:$ $$\left \langle s_1, s_2, s_3, s_4, s_4' | s_i^2,
\langle s_1, s_2 \rangle, \langle s_2, s_3 \rangle, \langle s_3, s_4 \rangle, \langle s_4, s_1 \rangle, \langle
s_3, s_4' \rangle, \langle s_4', s_1 \rangle, [s_1, s_3], [s_2, s_4], [s_2, s_4']\right \rangle.$$

Hence the Coxeter diagram for $G / <\G_i^2>$ is as given in Figure \ref{Cayley+Cox}.

\begin{figure}[ht]
\epsfxsize=3.5cm 
\begin{minipage}{\textwidth}
\begin{center}
\epsfbox{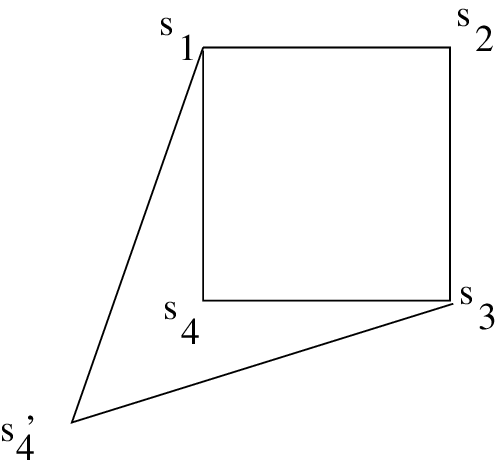}
\end{center}
\end{minipage}
\caption{Coxeter diagram of $G/<\G_i^2>$}\label{Cayley+Cox}
\end{figure}

\begin{lemma}\label{funny}
The kernel of the map from $G/\langle \G_i^2\rangle$ to $S_4$ is normally generated by $s_1s_3,$
$(s_1s_2s_3s_4s_3s_2)^2$ and $(s_1s_2s_3s_4's_3s_2)^2.$
\end{lemma}
\noindent {\bf Proof of Lemma \ref{funny}.} In $(\G_1'\G_3'\G_4'\G_3')^2,$  we substitute the $s_i$'s by
(\ref{Cayley+4}) to get $(s_1s_2s_3s_2s_4s_2s_3s_2)^2 = (s_1s_2s_3s_4s_3s_2)^2=e,$ and in
$(\G_1'\G_3'\G_4\G_3')^2,$ we get $(s_1s_2s_3s_2s_4's_2s_3s_2)^2=(s_1s_2s_3s_4's_3s_2)^2=e $. $\Box$ To get,
from the normal set of generators from Lemma \ref{funny}, a generating set for the kernel of the map $\rho :
G/<\G_i^2> \rightarrow S_4,$ we need to add the following two conjugates of $(s_1s_2s_3s_4s_3s_2)^2:$
$(s_2s_3s_4s_1s_4s_3)^2$ and $(s_3s_4s_1s_2s_1s_2)^2,$ the following two conjugates of
$(s_1s_2s_3s_4's_3s_2)^2:$ $(s_2s_3s_4's_1s_4's_3)^2$ and $(s_3s_4's_1s_2s_1s_4')^2,$ and the following
conjugate of the involution $s_1s_3:$ $s_2s_1s_3s_2.$

Hence, it follows that the kernel is $\mathbb{Z}^6 \semidirect \mathbb{Z}_2^2,$ where the copy of $\mathbb{Z}^6$
is generated by $(s_1s_2s_3s_4s_3s_2)^2,$ $(s_2s_3s_4s_1s_4s_3)^2,$ $(s_3s_4s_1s_2s_1s_4)^2,$
$(s_1s_2s_3s_4's_3s_2)^2,$ $(s_2s_3s_4's_1s_4's_3)^2,$ and $(s_3s_4's_1s_2s_1s_4')^2,$ and the generators of
$\mathbb{Z}_2^2$ are $s_1s_3$ and $s_2s_1s_3s_2.$
\forgotten
\end{proof}

\subsubsection{The $4$-point}\label{4-pt}  The last possible degeneration of a quartic surface is to a plane
arrangement with a $4$-point, as in Figure \ref{4-ptsurf}.

\begin{figure}[ht]
\epsfxsize=3cm 
\begin{minipage}{\textwidth}
\begin{center}
\epsfbox{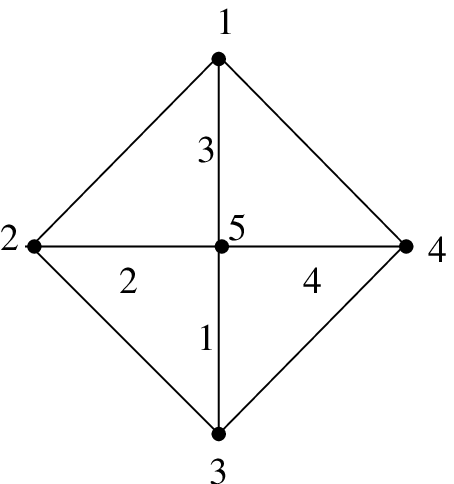}
\end{center}
\end{minipage}
\caption{Surface degeneration diagram}\label{4-ptsurf}
\end{figure}

Again, this surface can be smooth or singular.  For example, in $\C\P^4,$ consider the surfaces defined by the
ideal $(xz+tu(x+y+z+w), yw+tu(x+y+z-w)).$  For generic $t$ the surface is smooth, but when $t=0$ we get the four
planes defined by the ideal $(xz, yw).$

\begin{theorem}
The fundamental group of the Galois cover of the $4$-point surface is $\mathbb{Z}_2^3$.
\end{theorem}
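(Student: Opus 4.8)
The plan is to follow the same three-step method used throughout the paper: first compute the braid monodromy factorization of the branch curve by regenerating the $4$-point, then apply the van Kampen theorem to present $\pi_1(\C^2 - S)$, and finally pass to the Galois cover via the exact sequence \eqref{M-T}. Since the surface has degree $4$, the relevant map is $\pi_1(\C\P^2 - \bar S)\to S_4$, and $\pi_1(\Gal X)$ is the kernel $K$ modulo $\langle \G_i^2\rangle$.

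For the first step, observe that the four planes sit in a cyclic configuration $1$--$2$--$3$--$4$--$1$, with cyclically adjacent planes meeting in lines and opposite planes meeting only at the $4$-point; hence the degenerate branch curve $S_0$ consists of four concurrent lines through the single image of the $4$-point. I would regenerate each line to a conic $(i,i')$, so that the local analysis of the $4$-point from \cite{17} (compare the $6$-point treatment in \cite{17} and \cite{19}) produces, around the regenerated point, branch points, the cusps coming from the tangencies of consecutive conics (each tangency regenerating into three cusps by \cite[p. 335-337]{19}), and the nodes coming from the crossings of the four branches at the point. This yields a factorization of $\Delta^2_S$ into branch-point braids $Z_{i,i'}$, cusp braids $Z^3$, and node braids $Z^2$, organized cyclically around the four conics.

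For the second and third steps, van Kampen yields generators $\G_1,\G_1',\dots,\G_4,\G_4'$; the branch-point braids give relations identifying each $\G_i'$ with $\G_i$ or a conjugate, reducing the presentation to $\G_1,\G_2,\G_3,\G_4$. The cusps give braid relations $\langle \G_i,\G_{i+1}\rangle=e$ between cyclically consecutive generators, the nodes give commutators $[\G_i,\G_{i+2}]=e$ between opposite generators, and the $4$-point contributes the extra relations. Mapping $\G_1,\G_2,\G_3,\G_4$ to the transpositions $(1\,2),(2\,3),(3\,4),(1\,4)$ gives a $4$-cycle diagram in the sense of \cite{RTV}, whose Coxeter part is exactly the affine type $\tilde{A}_3$ (braid relations on the cycle edges, commutation on the two diagonals). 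I would then identify $G/\langle\G_i^2\rangle$ with a finite quotient of the affine Weyl group $W(\tilde{A}_3)=\mathbb{Z}^3\rtimes S_4$, so that the kernel of the map to $S_4$ is the translation lattice. Applying \cite[Theorem 2.3]{RTV} to the cyclic diagram, this kernel is normally generated by the cycle element $\G_1\G_2\G_3\G_4$ raised to the appropriate power together with its conjugates; the decisive point is that, whereas in the Veronese case the analogous elements have infinite order and yield a free abelian kernel, here the additional relations carried by the $4$-point force each such generator to square to the identity, collapsing $\mathbb{Z}^3$ to $\mathbb{Z}_2^3$ and giving $\pi_1(\Gal X)\cong\mathbb{Z}_2^3$.

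The main obstacle is the first step together with the extraction of these ``extra'' relations. The $4$-point is the most degenerate singularity treated in the paper, and getting its local braid monodromy and the resulting van Kampen relations exactly right is where the real work lies; in particular one must pin down precisely those relations that reduce the naive free abelian kernel $\mathbb{Z}^3$ to the torsion group $\mathbb{Z}_2^3$, and verify that exactly three independent commuting involutions survive — neither more nor fewer. That final bookkeeping, showing the kernel has rank three and exponent two, is the delicate combinatorial point on which the stated answer depends.
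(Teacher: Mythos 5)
Your proposal reproduces the paper's general scaffolding (degenerate to four planes, regenerate, apply the van Kampen theorem, then compute $K/\langle\G_i^2\rangle$), but it has a genuine gap exactly where the content lies: the braid monodromy of the regenerated $4$-point and the van Kampen relations it produces are never derived --- you explicitly defer them as ``the delicate combinatorial point.'' Moreover, the relation structure you predict is not the one the paper obtains. You assume the regenerated curve behaves like four conics in a cycle, giving braid relations $\langle\G_i,\G_{i+1}\rangle=e$ on all four cyclically adjacent pairs and commutators on the two diagonals, i.e.\ an affine $\tilde{A}_3$ diagram whose kernel $\Z^3$ is then to be collapsed by unspecified extra relations. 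In the paper (which imports the $4$-point braids from \cite{Ogata}), the regeneration is genuinely asymmetric: cusps occur only between $(1,1')$--$(2,2')$ and between $(3,3')$--$(4,4')$; the pair $(1,1')$--$(4,4')$ contributes only nodes; and the two middle conics $(2,2')$ and $(3,3')$ are joined by four \emph{branch points} $h_1,\dots,h_4$, whose relations (\ref{4pt81})--(\ref{4pt84}) equate a conjugate of $\G_2$ with a conjugate of $\G_3$. After the identifications $\G_i=\G_i'$ coming from (\ref{4pt69})--(\ref{4pt72}), that relation reads, modulo the squares, $\G_1\G_2\G_1^{-1}=\G_4\G_3\G_4^{-1}$, so $\G_3$ is eliminated as a generator; no $4$-cycle diagram, no affine Weyl group, and no appeal to \cite{RTV} occur anywhere in the paper's proof of this theorem.

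You should also know that carrying out your plan against the paper's actual data leads to a contradiction with your target. From the relations above, the paper shows $G/\langle\G_i^2\rangle$ is generated by $\G_1,\G_2,\G_4$ subject only to $\G_i^2=e$, $\langle\G_1,\G_2\rangle=\langle\G_2,\G_4\rangle=[\G_1,\G_4]=e$, hence is isomorphic to $S_4$; since any surjection of the finite group $S_4$ onto $S_4$ is an isomorphism, the paper concludes that the kernel --- which is precisely $\pi_1(\Gal{X})$ --- is \emph{trivial}. This contradicts the stated answer $\Z_2^3$ (and your target), so the paper is internally inconsistent at this very point: either its $h_i$-type relations or its theorem statement is in error. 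Whether the truth is $\Z_2^3$ (as the analogy with the cyclic triple-point case, which yields $\Z_2^2$, would suggest) or trivial (as the paper's own derivation gives) is decided entirely by those branch-point relations, which is exactly the step your proposal leaves open; as it stands, your argument establishes nothing beyond the generic setup.
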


\begin{proof}
The degenerated surface has one $4$-point and four $2$-points (nodes). Then each node regenerates to two branch points.
 This gives rise to the braid monodromy factors $\varphi_1  =  Z_{1 \; 1'} \cdot Z_{1 \; 1'}, \ \ \varphi_2  =  Z_{2 \; 2'} \cdot Z_{2 \; 2'}, \ \ \varphi_3  =  Z_{3 \; 3'} \cdot Z_{3 \; 3'},  \ \ \varphi_4 =  Z_{4 \; 4'} \cdot Z_{4 \; 4'}$, which are depicted in Figure \ref{4-pt-braids1}.

\begin{figure}[ht]
\epsfxsize=6cm 
\begin{minipage}{\textwidth}
\begin{center}
\epsfbox{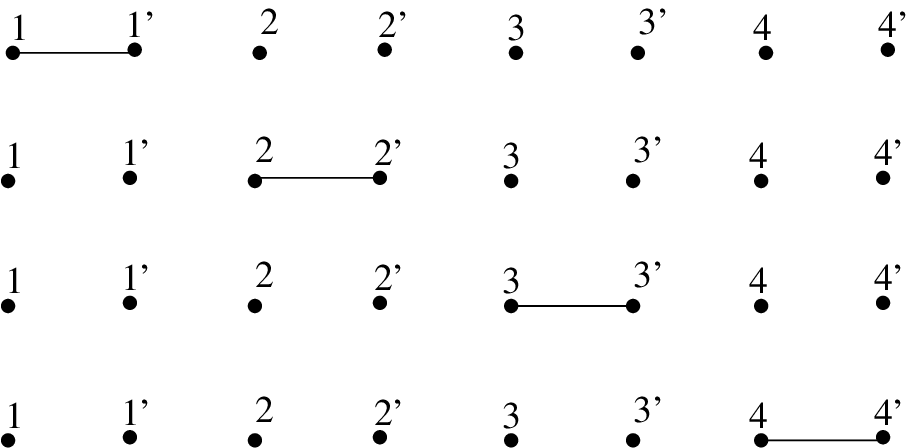}
\end{center}
\end{minipage}
\caption{$2$-point braids}\label{4-pt-braids1}
\end{figure}

The $4$-point gives rise to the following braid monodromy factors, as computed in \cite{Ogata}. These braids appear in order in Figure \ref{4-pt-braids2}.
\begin{eqnarray}
{}\varphi_5 & = & (Z^3_{1', 2 \; 2'} \cdot {Z^3_{3 \; 3',4}} \cdot h_1 \cdot h_2 \cdot {(Z^2_{1' \; 4})}^{Z_{1', 2 \; 2'}^2} \cdot Z^2_{1 \; 4}) \cdot \\ & & (Z^3_{1, 2 \; 2'} \cdot {(Z^3_{3 \; 3',4'})}^{Z^{-2}_{4 \; 4'}} \cdot h_3 \cdot h_4 \cdot {(Z^2_{1 \; 4'})}^{Z_{1, 2 \; 2'}^2 Z^{-2}_{4 \; 4'}}
\cdot {(Z^2_{1' \; 4'})}^{Z_{1 \; 1'}^{-2} Z^{-2}_{4 \; 4'}} ).\nonumber
\end{eqnarray}

\begin{figure}[ht]
\epsfxsize=6cm 
\begin{minipage}{\textwidth}
\begin{center}
\epsfbox{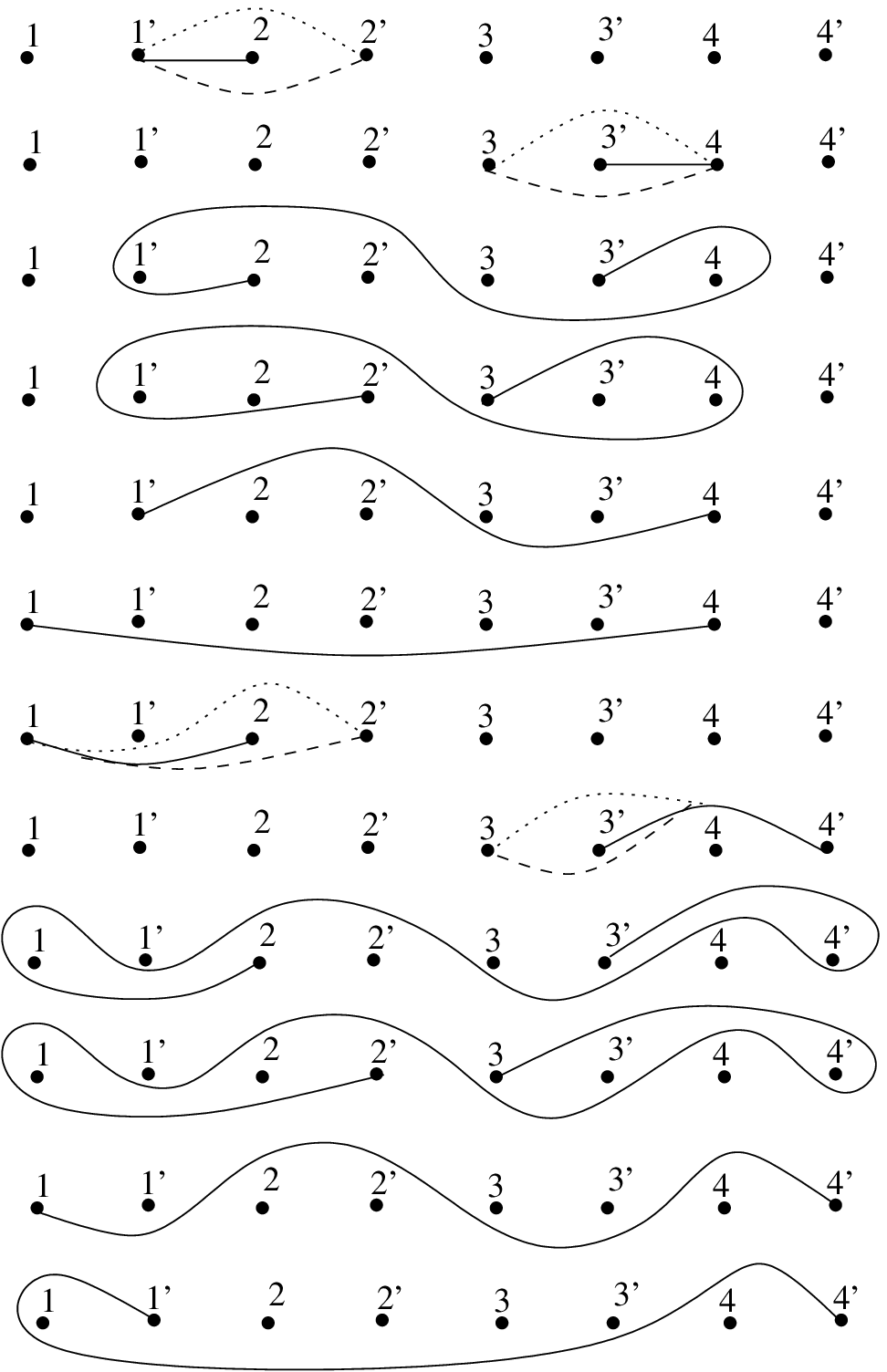}
\end{center}
\end{minipage}
\caption{$4$-point braids}\label{4-pt-braids2}
\end{figure}

These braids give rise to the relations
\begin{tiny}\begin{eqnarray}
{}&&\G_1 =  \G_1'\label{4pt69}\\
{}&&\G_2 = \G_2' \label{4pt70}\\
{}&&\G_3 = \G_3' \label{4pt71}\\
{}&&\G_4 = \G_4'  \label{4pt72}\\
{}&&\langle\G_1', \G_2 \rangle = \langle \G_1', \G_2'\rangle = \langle\G_1' \G_2^{-1}\G_2'\G_2\rangle = e \label{4pt73}\\
{}&&\langle\G_3, \G_4\rangle = \langle \G_3', \G_4\rangle = \langle\G_3' \G_3 \G_3'^{-1}, \G_4\rangle = e \label{4pt74}\\
{}&&[\G_2'\G_2\G_1'\G_2^{-1}\G_2'^{-1},\G_4] = e\label{4pt75}\\
{}&&[\G_1,\G_4] = e\label{4pt76}\\
{}&&\langle\G_1, \G_2\rangle = \langle \G_1, \G_2'\rangle = \langle\G_1, \G_2^{-1}\G_2'\G_2\rangle = e \label{4pt77}\\
{}&&\langle\G_3, \G_4^{-1}\G_4'\G_4\rangle = \langle \G_3', \G_4^{-1}\G_4'\G_4\rangle =
\langle\G_3' \G_3 \G_3'^{-1}, \G_4^{-1}\G_4'\G_4\rangle = e \label{4pt78}\\
{}&&[\G_2'\G_2\G_1\G_2^{-1}\G_2'^{-1},\G_4^{-1}\G_4'\G_4] = e\label{4pt79}\\
{}&&[\G_1^{-1}\G_1'\G_1,\G_4^{-1}\G_4'\G_4] = e\label{4pt80}\\
{}&&\G_2'\G_2\G_1'\G_2\G_1'^{-1}\G_2^{-1}\G_2'^{-1} = \G_4\G_3'\G_4^{-1}\label{4pt81}\\
{}&&\G_2'\G_2\G_1'\G_2'\G_1'^{-1}\G_2^{-1}\G_2'^{-1} = \G_4\G_3'\G_3\G_3'^{-1}\G_4^{-1}\label{4pt82}\\
{}&&\G_2'\G_2\G_1\G_2\G_1^{-1}\G_2^{-1}\G_2'^{-1} = \G_4^{-1}\G_4'\G_4\G_3'\G_4^{-1}\G_4'^{-1}\G_4\label{4pt83}\\
{}&&\G_2'\G_2\G_1\G_2'\G_1^{-1}\G_2^{-1}\G_2'^{-1} =
\G_4^{-1}\G_4'\G_4\G_3'\G_3\G_3'^{-1}\G_4^{-1}\G_4'^{-1}\G_4. \label{4pt84}
\end{eqnarray}\end{tiny}
We also have the projective relation
\begin{tiny}\begin{equation}\label{4ptprojrel}
\G_4'\G_4\G_3'\G_3\G_2'\G_2\G_1'\G_1=e.
\end{equation}\end{tiny}

Relations (\ref{4pt69})-(\ref{4pt72}) simplify the relations as follows:
\begin{eqnarray*}
{}&&\langle\G_1, \G_2 \rangle = e \\
{}&&\langle\G_3, \G_4\rangle =  e \\
{}&&[\G_2^2\G_1\G_2^{-2},\G_4] = e \\
{}&&[\G_1,\G_4] = e \\
{}&&\G_2^2\G_1\G_2\G_1^{-1}\G_2^{-2} = \G_4\G_3\G_4^{-1}\\
{}&&\G_4^2\G_3^2\G_2^2\G_1^2=e.
\end{eqnarray*}

In the group $G/<\G_i^2>$, we get
\begin{eqnarray*}
{}&&\langle\G_1, \G_2 \rangle = e \\
{}&&\langle\G_3, \G_4\rangle =  e \\
{}&&[\G_1,\G_4] = e \\
{}&&\G_1\G_2\G_1^{-1} = \G_4\G_3\G_4^{-1}.
\end{eqnarray*}
Using these relations, we have:
$$e=\langle\G_3, \G_4\rangle = \langle\G_4 \G_3 \G_4^{-1}, \G_4\rangle = \langle\G_1 \G_2 \G_1^{-1}, \G_4\rangle =
\langle\G_1 \G_2 \G_1^{-1}, \G_1 \G_4 \G_1^{-1} \rangle = \langle\G_2, \G_4\rangle.$$
Since $\G_3 = \G_4^{-1}\G_1\G_2\G_1^{-1}\G_4$, we get that $G/<\G_i^2>$ is generated by $\G_1, \G_2, \G_4$, with the relations
$\G_i^2 =e, \langle\G_1, \G_2 \rangle = \langle\G_2, \G_4 \rangle = [\G_1,\G_4] = e$. Therefore,  $G/<\G_i^2> \cong S_4$.

Since $S_4$ is a finite group, the only map onto $S_4$ is the identity map, and hence the kernel of the map is trivial.
\end{proof}

\section{Appendix: Calculations of Galois covers of $\mathbb{CP}^1 \times \mathbb{CP}^1$}\label{App}
\subsection{The case ($a=2$, $b=2$)} \label{F0-22}

We now consider the degeneration of $\mathbb{CP}^1 \times \mathbb{CP}^1$ of bidegree $(a=2; \, b=2),$ as shown
in Figure \ref{(2,2)}.  We expect $\deg \Delta_{16}^2=240$ conditions on the $16$ Van Kampen generators $\G_1,
\G_1', \cdots, \G_8, \G_8'.$
\begin{figure}[h]
\epsfysize=4cm 
\begin{minipage}{\textwidth}
\begin{center}
\epsfbox{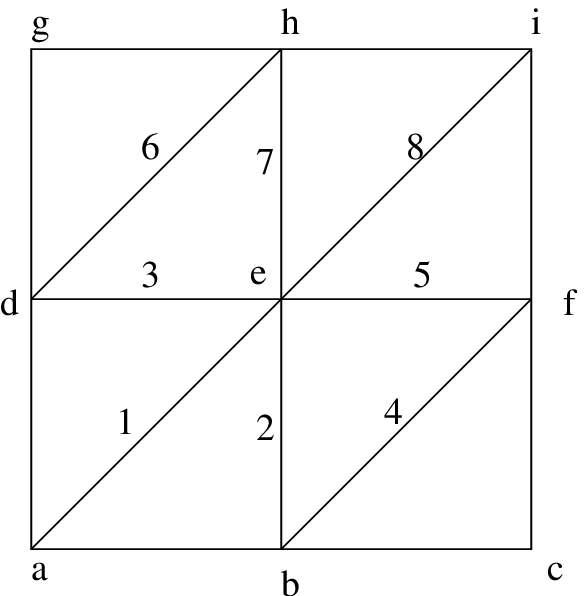}
\end{center}
\end{minipage}
\caption{$\C\P^1 \times \C\P^1$ $(2,2)$-degeneration}\label{(2,2)}
\end{figure}

We construct the braid relations from the degeneration.  Vertex $a$ (resp., vertex $c$) is equivalent to vertex
$1$ (resp., vertex $3$) of the $(1,2)$-degeneration from \ref{F0-12}, so giving rise to the single conditions
\begin{tiny}\begin{eqnarray}
{}\G_1=\G_1'\label{1'}\\
{}\G_8=\G_8'.\label{8'}
\end{eqnarray}\end{tiny}

Vertex $b$ and vertex $d$ are equivalent to vertex $2$ of the $(1,2)$-degeneration, giving rise to the sets of
degree $10$ conditions \begin{tiny}
\begin{eqnarray}
{}\G_4' &=& \G_4\G_2'\G_2\G_4\G_2^{-1}\G_2'^{-1}\G_4^{-1}\label{4b}\\
{}\langle a,\G_4 \rangle &=& e, \ \ \mbox{where $a=\G_2,$ $\G_2'$ or $\G_2^{-1}\G_2'\G_2$}\label{<2*,4>}\\
{}\G_6' &=& \G_4\G_2'\G_2\G_6\G_2^{-1}\G_2'^{-1}\G_4^{-1}\label{6d}\\
{}[a,\G_6] &=& e,\ \ \mbox{where $a=\G_2,$ $\G_2'$ or $\G_2^{-1}\G_2'\G_2.$}\label{<2*,6>}
\end{eqnarray}
\end{tiny}

The braids for the central vertex $e$ include those shown in Figures \ref{eb15} and \ref{eb610},
 \begin{figure}[ht!]
\epsfxsize=9cm 
\begin{minipage}{\textwidth}
\begin{center}
\epsfbox{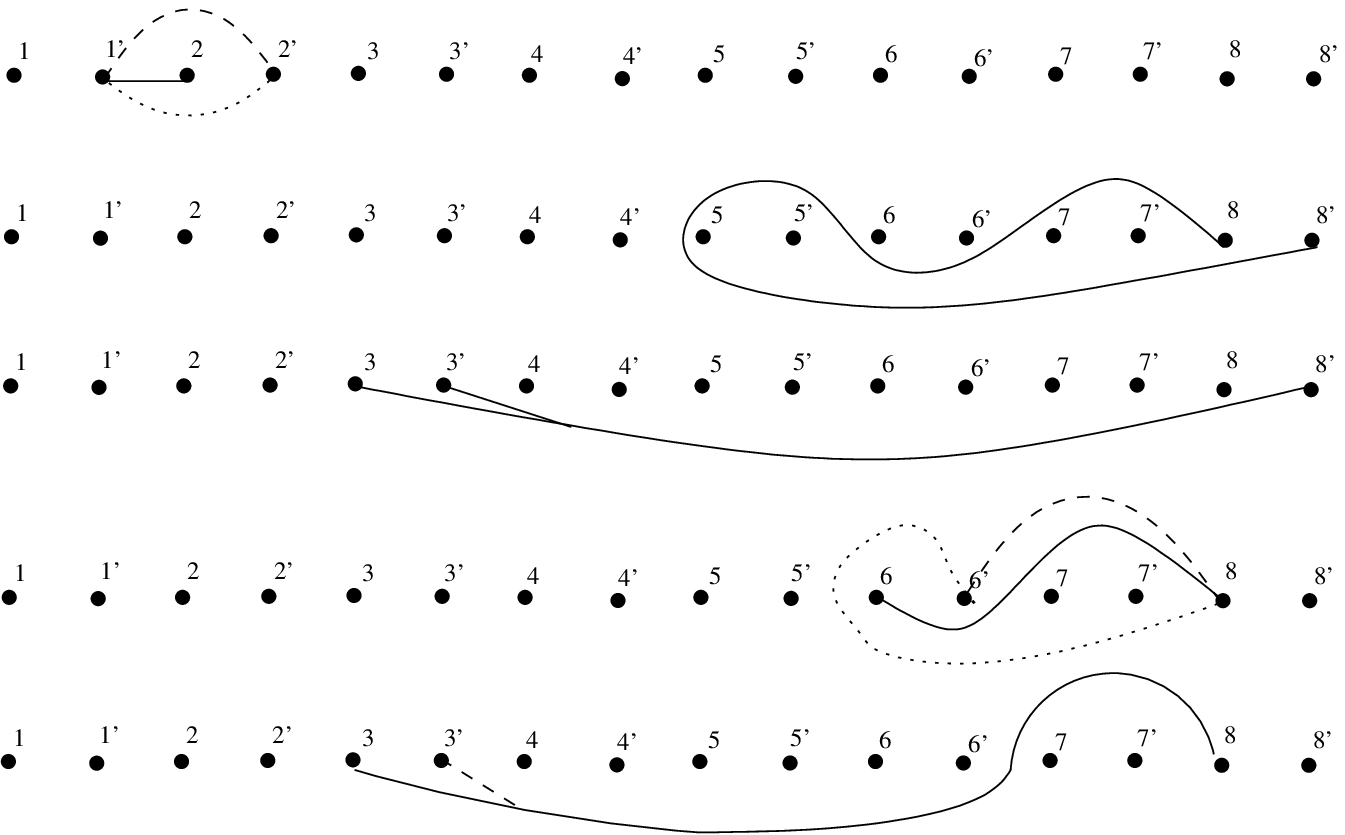}
\end{center}
\end{minipage}
\caption{$Z_{1',22}^3, (Z_{88'})^{Z_{55',8}^{-2}Z_{77',8}^{-2}}, Z_{33',8}^2, \bar{Z}_{55',8}^3,
(Z_{33',8}^2)^{Z_{77',8}^{-2}}$}\label{eb15}
\end{figure}

\begin{figure}[h]
\epsfxsize=9cm 
\begin{minipage}{\textwidth}
\begin{center}
\epsfbox{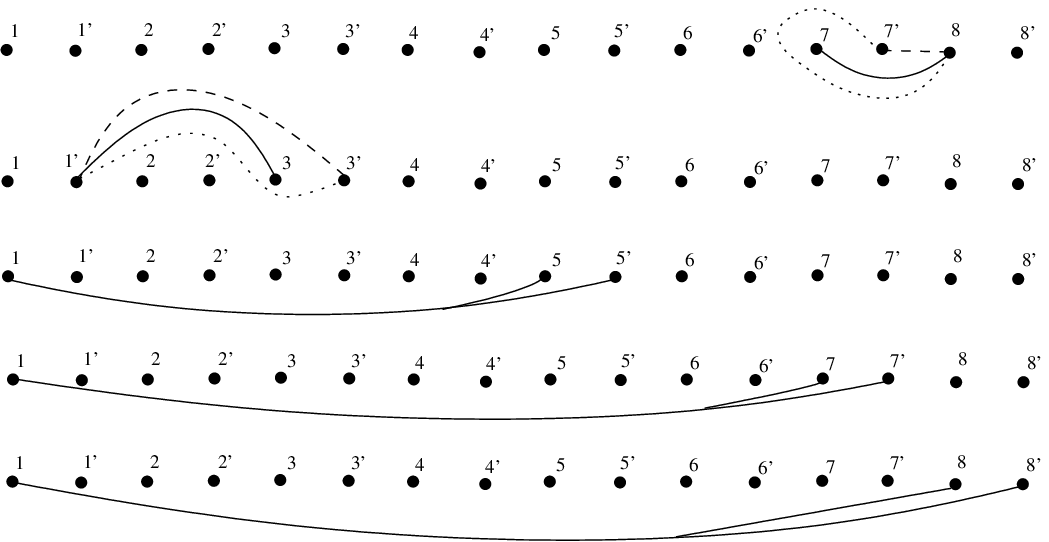}
\end{center}
\end{minipage}
\caption{$Z_{77',8}^3, \bar{Z}_{1',33'}^3, Z_{1,55'}^2, Z_{1,77'}^2, Z_{1,88'}^2$}\label{eb610}
\end{figure}
\noindent as well as those shown in Figures \ref{ef13} and
\ref{ef46}, which are all conjugated by $Z_{77',8}^{-2}Z_{1',
22'}^2$, those shown in Figures \ref{ef79} and \ref{ef1012}, which
are all conjugated by $Z_{77'}^{-1}Z_{22}^{-1},$ and those
shown in Figures \ref{eg13} and \ref{eg46}, which are all
conjugated by $Z_{1', 22'}^2.$ $Z_{77',8}^{-2}Z_{1', 22'}^2,$
\begin{figure}[tbp]
\epsfxsize=9cm 
\begin{minipage}{\textwidth}
\begin{center}
\epsfbox{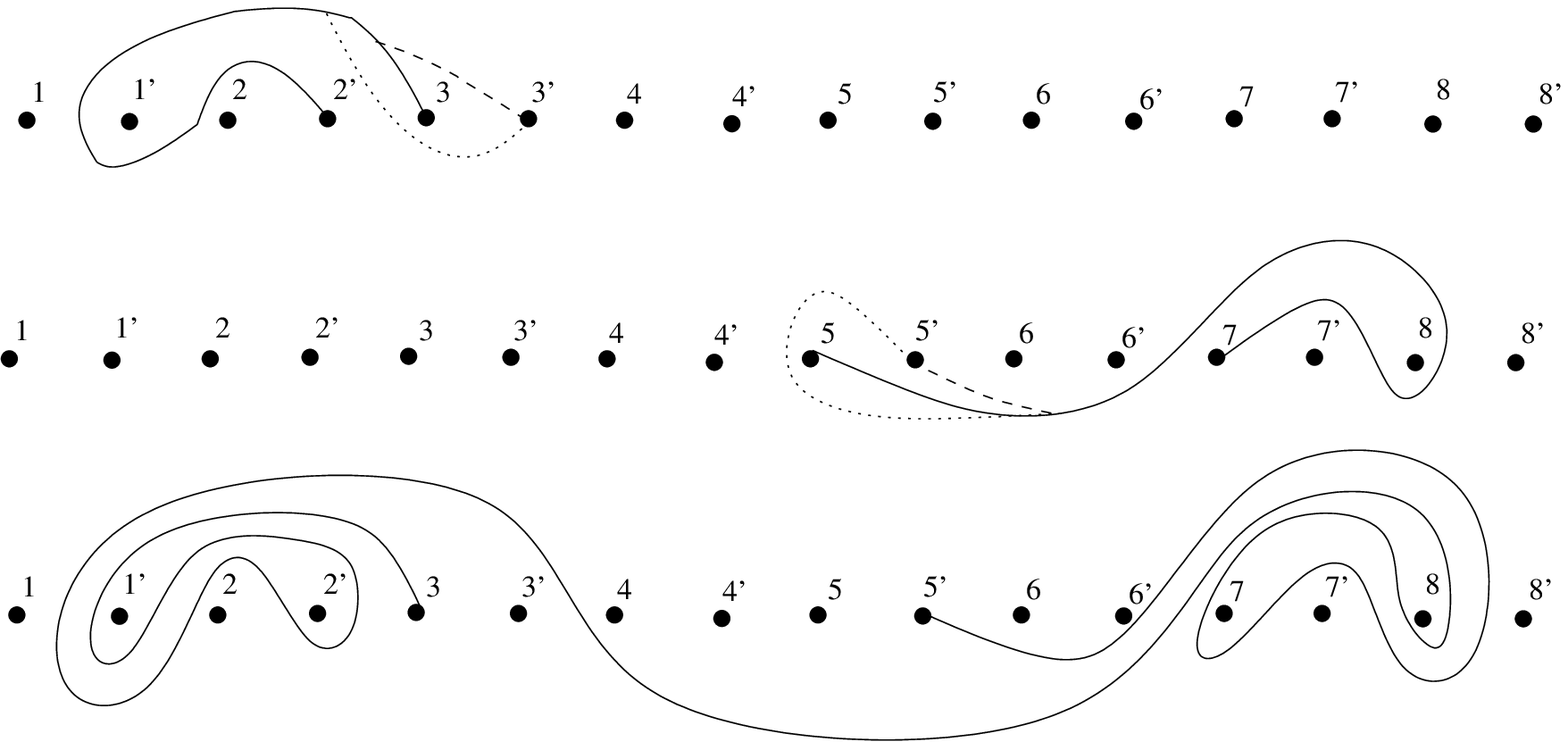}
\end{center}
\end{minipage}
\caption{$(Z_{2',33'}^3)^{Z_{77',8}^{-2}Z_{1', 22'}^2}, (Z_{55',7}^3)^{Z_{77',8}^{-2}Z_{1', 22'}^2},
(\wt{Z}_{3,5'})^{Z_{77',8}^{-2}Z_{1', 22'}^2}$}\label{ef13}
\end{figure}
\begin{figure}[tbp]
\epsfxsize=9cm 
\begin{minipage}{\textwidth}
\begin{center}
\epsfbox{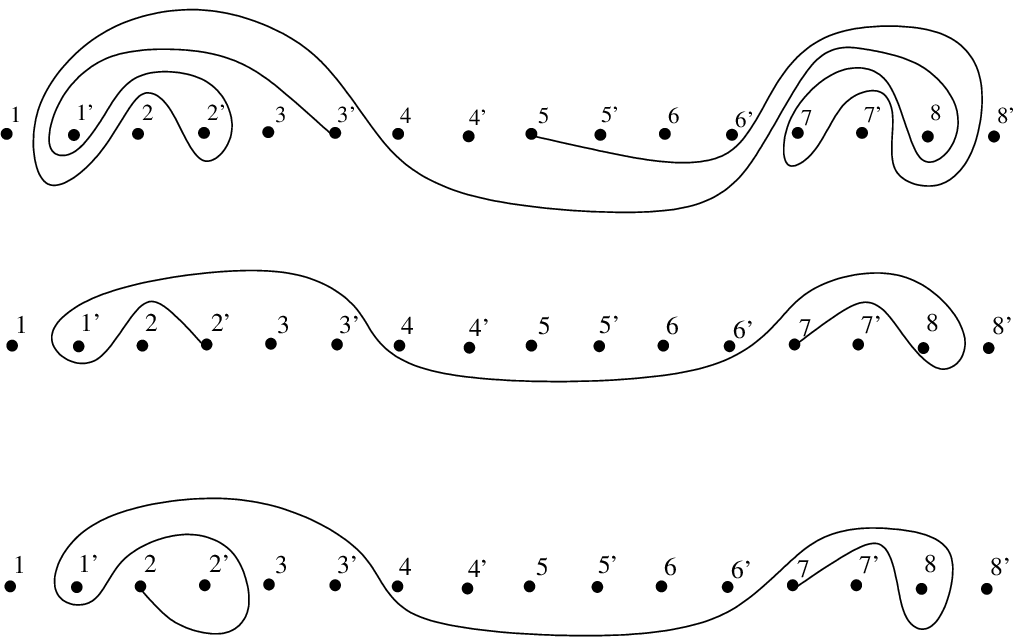}
\end{center}
\end{minipage}
\caption{\tiny{$(\wt{Z}_{3',5})^{Z_{77',8}^{-2}Z_{1', 22'}^2}, (Z_{2'7}^2)^{Z_{2,33'}^2Z_{77',8}^{-2}Z_{1',
22'}^2}, (Z_{27}^2)^{Z_{77',8}^{-2}Z_{1', 22'}^2}$}}\label{ef46}
\end{figure}
\begin{figure}[tbp]
\epsfxsize=9cm 
\begin{minipage}{\textwidth}
\begin{center}
\epsfbox{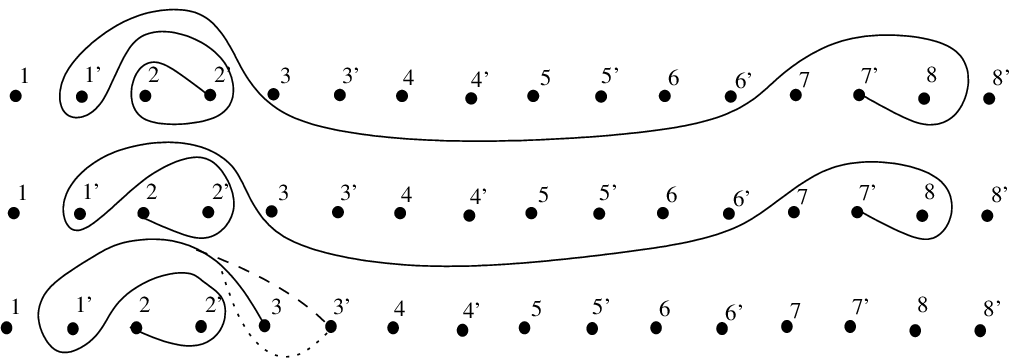}
\end{center}
\end{minipage}
\caption{{\tiny{${(Z_{2',7}^2)}^{Z_{77'}^{-1}Z_{22'}^{-1}Z_{2',33}^2Z_{77',8}^{-2}Z_{1', 22'}^2},
{(Z_{2,27'}^2)}^{Z_{77'}^{-1}Z_{22'}^{-1}Z_{77',8}^{-2}Z_{1', 22'}^2},
{(Z_{2',33'}^3)}^{Z_{77'}^{-1}Z_{22'}^{-1}Z_{77',8}^{-2}Z_{1',22'}^2}$}}}\label{ef79}
\end{figure}
\begin{figure}[b]
\epsfxsize=9cm 
\begin{minipage}{\textwidth}
\begin{center}
\epsfbox{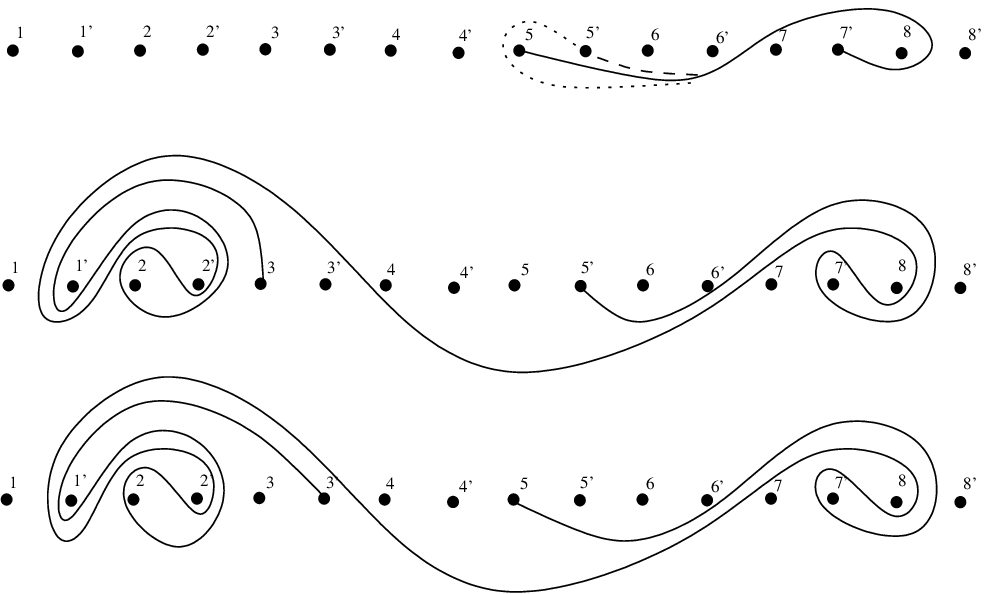}
\end{center}
\end{minipage}
\caption{\tiny{$(Z_{55',7}^3)^{Z_{77'}^{-1}Z_{22}^{-1}Z_{77',8}^{-2}Z_{1',22'}^2},
(\wt{Z}_{3,5'})^{Z_{77'}^{-1}Z_{22}^{-1}Z_{77',8}^{-2}Z_{1', 22'}^2},
(\wt{Z}_{3',5})^{Z_{77'}^{-1}Z_{22}^{-1}Z_{77',8}^{-2}Z_{1', 22'}^2}$}}\label{ef1012}
\end{figure}
\begin{figure}[t]
\epsfxsize=9cm 
\begin{minipage}{\textwidth}
\begin{center}
\epsfbox{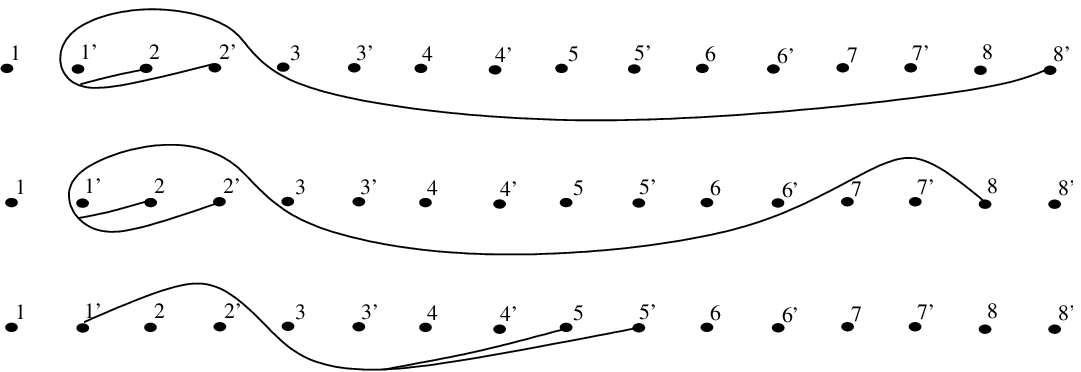}
\end{center}
\end{minipage}
\caption{$(Z_{22',8}^2)^{Z_{1', 22'}^2}, (Z_{22',8}^2)^{Z_{77',8}^{-2}Z_{1', 22'}^2}, (Z_{1,55'}^2)^{Z_{1',
22'}^2}$}\label{eg13}
\end{figure}
\begin{figure}[h]
\epsfxsize=9cm 
\begin{minipage}{\textwidth}
\begin{center}
\epsfbox{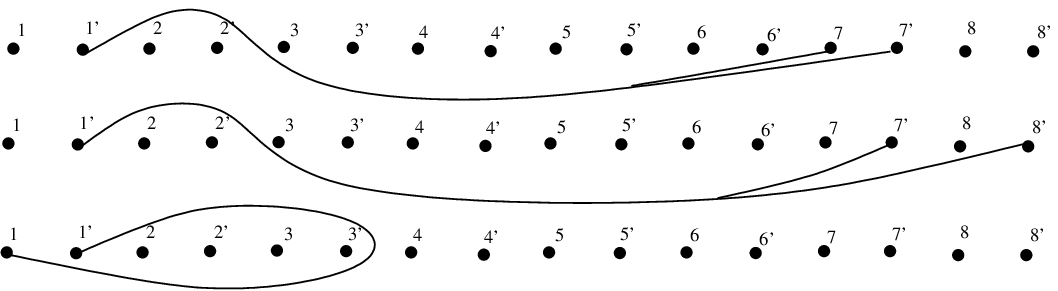}
\end{center}
\end{minipage}
\caption{$(Z_{1',77'}^2)^{Z_{1', 22'}^2}, (Z_{1',88'}^2)^{Z_{1', 22'}^2}, (Z_{1,1'})^{Z_{1', 33'}^2Z_{1',
22'}^2}$}\label{eg46}
\end{figure}

 The vertex thus gives rise to the following conditions:
\begin{tiny}
\begin{eqnarray}
{}\langle\G_1',a\rangle = e, \ \ \mbox{where $a=\G_2,$ $\G_2'$ or $\G_2^{-1}\G_2'\G_2,$}\label{<1',2*>} \\
{}\G_8' = \G_5^{-1}\G_5'^{-1}\G_7^{-1}\G_7'^{-1}\G_8\G_7'\G_7\G_5'\G_5, \label{8'**}\\
{}[b,\G_8'] = e, \ \ \mbox{where $b=\G_3$ or $\G_3',$}\label{[3*,8']}\\
{}\langle \G_6^{-1}\G_6'^{-1}\G_7^{-1}\G_7'^{-1}\G_8\G_7'\G_7\G_6'\G_6, c\rangle = e,\ \ \mbox{where $c=\G_5,$ $\G_5',$ or $\G_5^{-1}\G_5'\G_5,$}\label{<87'76'6,5*>}\\
{}[\G_7^{-1}\G_7'^{-1}\G_8\G_7'\G_7, b] = e,\ \ \mbox{ where $b=\G_3$ or $\G_3',$}\label{[87'7,3*]}\\
{}\langle d, \G_8 \rangle = e, \ \ \mbox{where $d=\G_7,$ $\G_7',$ or $\G_7^{-1}\G_7'\G_7,$}\label{<7*,8>}\\
{}\langle \G_2'\G_2\G_1'\G_2^{-1}\G_2'^{-1}, b \rangle = e, \ \ \mbox{where $b=\G_3,$ $\G_3'$ or $\G_3^{-1}\G_3'\G_3,$}\label{<1'-(2'2), 3*>}\\
{}[\G_1, c] = e,\ \ \mbox{where $c=\G_5$ or $\G_5',$}\label{[1,5*]}\\
{}[\G_1, d] = e,\ \ \mbox{where $d=\G_7$ or $\G_7',$}\label{[1,7*]}\\
{}[\G_1, e] = e,\ \ \mbox{where $e=\G_8$ or $\G_8',$}\label{[1,8*]}
\end{eqnarray}

\begin{eqnarray}
{}\langle \G_2'\G_2\G_1'\G_2^{-1}\G_2'\G_2\G_1'^{-1}\G_2^{-1}\G_2'^{-1}, b \rangle = e, \ \ \mbox{where $b=\G_3,$ $\G_3'$ or $\G_3^{-1}\G_3'\G_3,$}\label{<(2'-(1'-2))2',3*>}\\
{}\langle c, \G_7^{-1}\G_7'^{-1}\G_8^{-1}\G_7'\G_7\G_7'^{-1}\G_8\G_7'\G_7 \rangle = e, \ \ \mbox{where $c=\G_5,$ $\G_5'$ or $\G_5^{-1}\G_5'\G_5,$}\label{<5*, (7(87'))7>}\\
{}\G_3'\G_3\G_2'\G_2\G_1'\G_2^{-1}\G_2'\G_2\G_1'^{-1}\G_2^{-1}\G_2'^{-1}\G_3\G_2'\G_2\G_1'\G_2^{-1}\G_2'^{-1}\G_2\G_1'^{-1}\G_2^{-1}\G_2'^{-1}\G_3^{-1}\G_3'^{-1} \label{3,5'*}\\
{}\nonumber = \G_7^{-1}\G_7'^{-1}\G_8^{-1}\G_7'\G_7\G_7'^{-1}\G_8\G_7'\G_7\G_5'\G_7^{-1}\G_7'^{-1}\G_7'\G_7^{-1}\G_7'^{-1}\G_8\G_7'\G_7,\\
{}\G_3'\G_3\G_2'\G_2\G_1'\G_2^{-1}\G_2'\G_2\G_1'^{-1}\G_2^{-1}\G_2'^{-1}\G_3^{-1}\G_3'\G_3\G_2'\G_2\G_1'\G_2^{-1}\G_2'^{-1}\G_2\G_1'^{-1}\G_2^{-1}\G_2'^{-1}\G_3^{-1}\G_3'^{-1}\\
{}\nonumber = \G_7^{-1}\G_7'^{-1}\G_8^{-1}\G_7'\G_7\G_7'^{-1}\G_8\G_7'\G_7\G_5\G_7^{-1}\G_7'^{-1}\G_8^{-1}\G_7'\G_7^{-1}\G_7'^{-1}\G_8\G_7'\G_7, \label{3',5*}\\
{}[\G_3'\G_3\G_2'\G_2\G_1'\G_2^{-1}\G_2'\G_2\G_1'^{-1}\G_2^{-1}\G_2'^{-1}\G_3^{-1}\G_3'^{-1},\G_7^{-1}\G_7'^{-1}\G_8^{-1}\G_7'\G_7\G_7'^{-1}\G_8\G_7'\G_7] = e, \label{[1'-(3'32'1'-2),7((87')7)]}\\
{}[\G_2'\G_2\G_1'\G_2^{-1}\G_2'^{-1}\G_2\G_2'\G_2\G_1'^{-1}\G_2^{-1}\G_2'^{-1},\G_7^{-1}\G_7'^{-1}\G_8^{-1}\G_7'\G_7\G_7'^{-1}\G_8\G_7'\G_7]
= e, \\ \label{[2-(1'-(22')),7(87'7)]}
{}[\G_2'\G_2\G_1'\G_2^{-1}\G_2'^{-1}\G_2^{-1}\G_2'\G_2\G_2'\G_2\G_1'^{-1}\G_2^{-1}\G_2'^{-1},
\G_7^{-1}\G_7'\G_8^{-1}\G_7'\G_8\G_7'\G_7]= e, \label{[(2'2)-(1'-(2'2)), 7'(87'7)]}\\
{}[\G_2'\G_2\G_1'\G_2^{-1}\G_2'^{-1}\G_2\G_2'\G_2\G_1'^{-1}\G_2^{-1}\G_2'^{-1}, \G_7^{-1}\G_7'^{-1}\G_8^{-1}\G_7'\G_8\G_7'\G_7] = e,\label{[2-(1'-(2'2)),7'(87'7)]}\\
{}\langle \G_2'\G_2\G_1'\G_2^{-1}\G_2'^{-1}\G_2\G_2'\G_2\G_1'^{-1}\G_2^{-1}\G_2'^{-1}, b\rangle = e, \ \ \mbox{where $b=\G_3,$ $\G_3'$ or $\G_3^{-1}\G_3'\G_3,$}\label{<2-(1'(-2'2)),3*>}\\
{}\langle c, \G_7^{-1}\G_7'^{-1}\G_8^{-1}\G_7'\G_8\G_7'\G_7 \rangle = e, \ \
\mbox{where $c=\G_5,$ $\G_5'$ or $\G_5^{-1}\G_5'\G_5,$}\label{<5*,7'(87'7)>}\\
{}\G_3'\G_3\G_2'\G_2\G_1'\G_2^{-1}\G_2'^{-1}\G_2\G_2'\G_2\G_1'^{-1}\G_2^{-1}
\G_2'^{-1}\G_3\G_2'\G_2\G_1'\G_2^{-1}\G_2'^{-1}\G_2^{-1}\G_2'\G_2\G_1'^{-1}\G_2^{-1}
\G_2'^{-1}\G_3^{-1}\G_3'^{-1} \label{3,5'**}\\
{}=\G_7^{-1}\G_7'^{-1}\G_8^{-1}\G_7'\G_8\G_7'\G_7\G_5'\G_7^{-1}\G_7'^{-1}
\G_8^{-1}\G_7'^{-1}\G_8\G_7'\G_7, \nonumber \\
{}\G_3'\G_3\G_2'\G_2\G_1'\G_2^{-1}\G_2'^{-1}\G_2\G_2'\G_2\G_1'^{-1}\G_2^{-1}\G_2'^{-1}\G_3^{-1}\G_3'\G_3
\G_2^{-1}\G_2'^{-1}\G_2^{-1}\G_2'\G_2\G_1'^{-1}\G_2^{-1}\G_2'^{-1}\G_3^{-1}\G_3'^{-1} \label{3',5**}\\
{}=\G_7^{-1}\G_7'^{-1}\G_8^{-1}\G_7'\G_8\G_7'\G_7\G_5\G_7^{-1}\G_7'^{-1}\G_8^{-1}\G_7'\G_8\G_7'\G_7,\nonumber \\
{}[\G_2'\G_2\G_1'a\G_1'^{-1}\G_2'^{-1}, \G_8'] = e, \label{[2*-(2'21'),8']}\\
{}[\G_2'\G_2\G_1'a\G_1'^{-1}\G_2^{-1}\G_2'^{-1}, \G_7'\G_7'\G_8\G_7'\G_7] = e, \
 \ \mbox{where $a=\G_2$ or $\G_2',$}
\label{[2*-(2'21'), 8(7'7)]}\\
{}[\G_2'\G_2\G_1'\G_2^{-1}\G_1'^{-1}, c]= e, \ \ \mbox{where $c=\G_5$ or $\G_5',$} \label{[1'-(2'2),5*]}\\
{}[\G_2'\G_2\G_1'\G_2^{-1}\G_2'^{-1}, d] = e,\ \ \mbox{where $d=\G_7$ or $\G_7',$}\label{[1'-(2'2), 7*]}\\
{}[\G_2'\G_2\G_1'\G_2^{-1}\G_2'^{-1}, f] = e,\ \ \mbox{where $f=\G_8$ or $\G_8',$}\label{[1'-(2'2), 8*]}\\
{}\G_3'\G_3\G_2'\G_2\G_1'\G_2^{-1}\G_2'^{-1}\G_3^{-1}\G_3'^{-1} = \G_1. \label{1=1'-(3'32'2)}
\end{eqnarray}
\end{tiny}

Vertex $f$ and vertex $h$ are equivalent to vertex $5$ of the $(1,2)$ degeneration, giving rise to the sets of
degree $10$ conditions \begin{tiny}
\begin{eqnarray}
\G_4 &=& \G_5'\G_5\G_4'\G_5^{-1}\G_5'^{-1},\label{4d}\\
\langle \G_4',c \rangle &=& e,\ \ \mbox{where $c=\G_5,$ $\G_5'$ or $\G_5^{-1}\G_5'\G_5$},\label{<4',5*>}\\
\G_6 &=& \G_7'\G_7\G_6'\G_7^{-1}\G_7'^{-1},\label{7d}\\
\langle \G_6', d \rangle &=& 1,\ \ \mbox{where $d=\G_7,$ $\G_7'$ or $\G_7^{-1}\G_7'\G_7.$}\label{<6,7*>}
\end{eqnarray}\end{tiny}

Finally, for the parasitic intersections, we have the following braids:

\begin{figure}[ht!]
\epsfxsize=9cm 
\begin{minipage}{\textwidth}
\begin{center}
\epsfbox{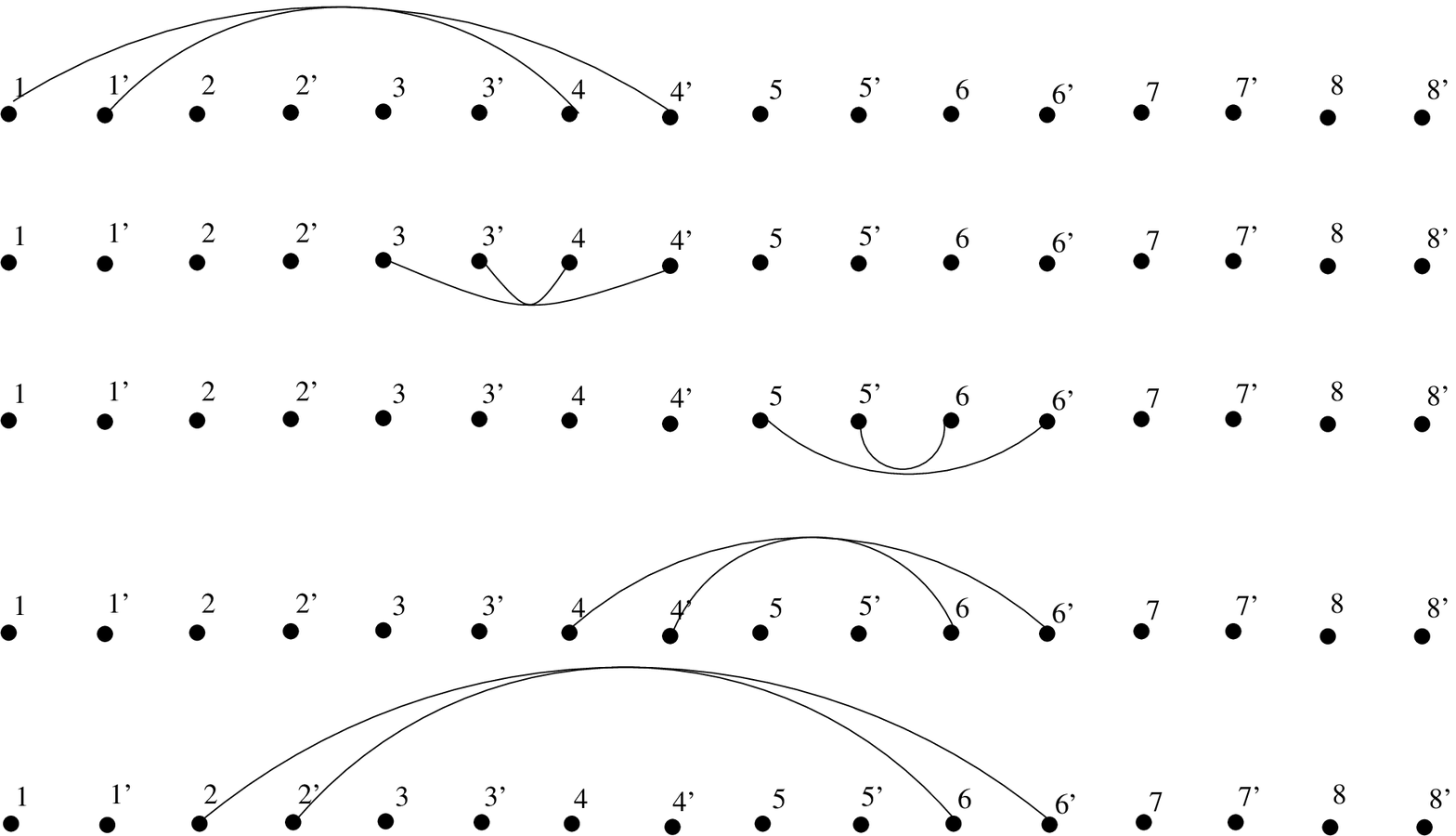}
\end{center}
\end{minipage}
\caption{\tiny{$\bar{Z}_{11',44'}^2, \bar{Z}_{33',44'}^2, Z_{55',66'}^2, Z_{44',66'}^2,
Z_{22',66'}^2$}}\label{para15}
\end{figure}
\begin{figure}[ht!]
\epsfxsize=9cm 
\begin{minipage}{\textwidth}
\begin{center}
\epsfbox{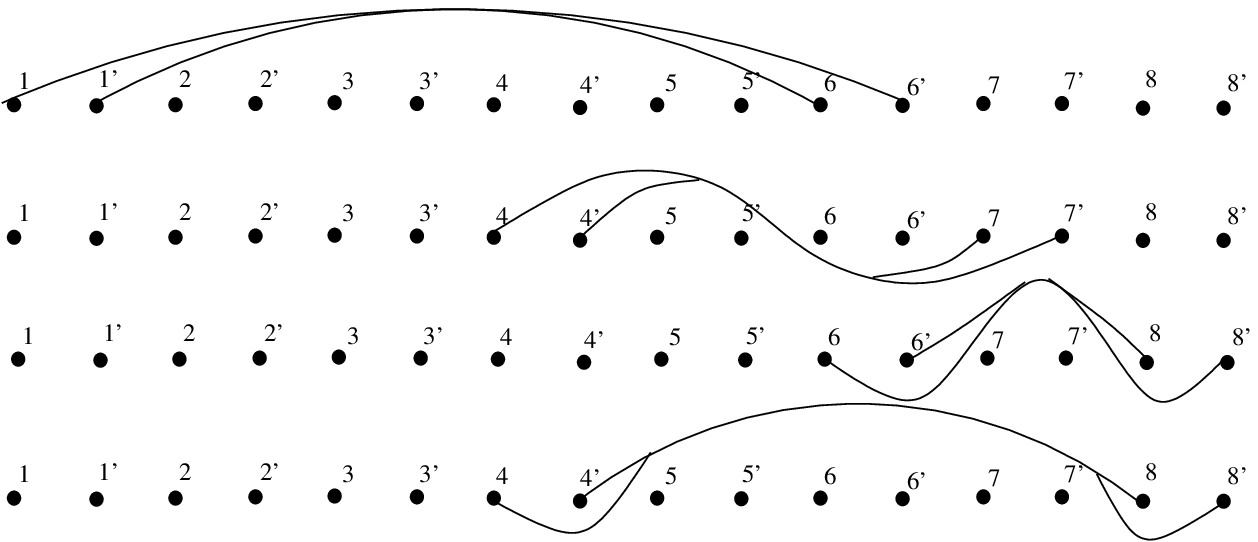}
\end{center}
\end{minipage}
\caption{\tiny{$\bar{Z}_{11',66'}^2, (Z_{44',77'}^2)^{\bar{Z}^{2}_{4',55'}},
(Z^2_{66',88'})^{\bar{Z}^{2}_{77',8}}, (Z^2_{44',88'})^{\bar{Z}^2_{4',8}}$}}\label{para69}
\end{figure}

Thus the relations are \begin{tiny}
\begin{eqnarray}
{}[\G_3'\G_3\G_2'\G_2g\G_2^{-1}\G_2'^{-1}\G_3^{-3}\G_3'^{-1}, h] = e,\\
{}\nonumber \mbox{where $g=\G_1'\G_1\G_1'^{-1}$ or $\G_1',$ and $h=\G_4$ or $\G_4',$}\label{[1*-(3'32'2),4*]}\\
{}[b, h] = e,\; \mbox{where $b=\G_3$ or $\G_3'$ and $h=\G_4$ or $\G_4',$}\label{[3*,4*]}\\
{}[d, i] = e, \; \mbox{where $d=\G_5$ or $\G_5'$ and $i=\G_6$ or $\G_6',$}\label{[5*,6*]}\\
{}[\G_5'\G_5g\G_5^{-1}\G_5'^{-1},i] = e, \; \mbox{where $g=\G_4'\G_4\G_4'^{-1}$ or $\G_4'$ and $i=\G_6$ or $\G_6^{-1}\G_6'\G_6,$}\label{[4*-(5'5),6*]}\\
{}[\G_5'\G_5\G_4'\G_4\G_3'\G_3a\G_3^{-1}\G_3'^{-1}\G_4^{-1}\G_4'^{-1}\G_5^{-1}\G_5'^{-1}, i] = e,\\
{}\nonumber \mbox{where $a=\G_2'\G_2\G_2'^{-1}$ or $\G_2'$ and $i=\G_6$ or
$\G_6^{-1}\G_6'\G_6.$}\label{[2*-(5'54'43'3), 6*]}\\
{}[\G_5'\G_5\G_4'\G_4\G_3'\G_3\G_2'\G_2g\G_2^{-1}\G_2'^{-1}\G_3^{-1}\G_3'^{-1}\G_4^{-1}\G_4'^{-1}\G_5^{-1}\G_5'^{-1}, i] = e, \\
{}\nonumber \mbox{where $g=\G_1'$, $\G_1'\G_1\G_1'^{-1}$; $i=\G_6$, $\G_6^{-1}\G_6'\G_6,$}\label{[1*-(5'54'43'32'2),6*]}\\
{}[\G_5'\G_5h\G_5^{-1}\G_5'^{-1}, d]=e,\; \mbox{where $h=\G_4$ or $\G_4'$; $d=\G_7$ or $\G_7',$}\label{[4*-(5'5),7*]}\\
{}[\G_7'\G_7i\G_7^{_1}\G_7'^{-1}, f]=e,\; \mbox{where $i=\G_6$ or $\G_6'$; $f=\G_8$ or $\G_8',$}\label{[6*-(7'7),8*]}\\
{}[\G_7'\G_7\G_6'\G_6\G_5'\G_5h\G_5^{-1}\G_5'^{-1}\G_6^{-1}\G_6'^{-1}\G_7^{-1}\G_7'^{-1}, f]=e,\\
{}\nonumber \mbox{where $h=\G_4$ or $\G_4'$; $f=\G_8$ or
$\G_8'.$}\label{[4*-(7'76'65'5),8*]}\end{eqnarray}\end{tiny}

Finally, we also have the projective relation \begin{tiny}\begin{equation} \label{projrel}
\G_8'\G_8\G_7'\G_7\G_6'\G_6\G_5'\G_5\G_4'\G_4\G_3'\G_3\G_2'\G_2\G_1'\G_1=e.
\end{equation}\end{tiny}

\begin{theorem} The fundamental group of the Galois cover is normally generated by $[\G_2, \G_4 \G_5
\G_4].$\end{theorem}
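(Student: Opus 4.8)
The plan is to work in the quotient $\bar G:=\pi_1(\C\P^2-\bar S)/\langle\G_i^2\rangle$, in which every generator and every conjugate of a generator is an involution, and to identify $\pi_1(\Gal X)$ with the kernel of the induced surjection $\rho\colon\bar G\to S_8$ coming from the Moishezon--Teicher sequence \eqref{M-T}. Writing $c:=[\G_2,\G_4\G_5\G_4]$, I would prove the equivalent statement that adjoining the single relation $c=e$ collapses $\bar G$ onto $S_8$, i.e.
$$\bar G/\langle\!\langle c\rangle\!\rangle \cong S_8.$$
Granting this, $\rho$ is onto and (as checked below) factors through $\bar G/\langle\!\langle c\rangle\!\rangle$; since a surjection of a group of order $8!$ onto $S_8$ is an isomorphism, the induced map is an isomorphism and therefore $\Ker\rho=\langle\!\langle c\rangle\!\rangle$, which is exactly the asserted normal generation.

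First I would reduce the generating set. Relations \eqref{1'} and \eqref{8'} eliminate $\G_1'$ and $\G_8'$, while the vertex relations \eqref{4b}, \eqref{6d}, \eqref{4d} and \eqref{7d} express $\G_4',\G_6'$ (and reciprocally $\G_4,\G_6$) as explicit conjugates of their partners, so that modulo the squares $\G_4'$ and $\G_6'$ are conjugate to $\G_4$ and $\G_6$. Reading the transposition images off the incidence data of Figure~\ref{(2,2)} gives $\rho(\G_i)=\tau_i$; a direct check shows that $\tau_4$ and $\tau_5$ share a sheet, so $\rho(\G_4\G_5\G_4)=\tau_4\tau_5\tau_4$ is a single transposition, and that it is disjoint from $\tau_2$. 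Hence $\rho(c)=e$, so $c\in\Ker\rho$ and $\rho$ factors through $\bar G/\langle\!\langle c\rangle\!\rangle$; this gives the easy inclusion $\langle\!\langle c\rangle\!\rangle\subseteq\Ker\rho$.

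The substance is the reverse inclusion, i.e. the isomorphism displayed above. Following the pattern of the $(1,2)$ and $4$-point computations in \ref{F0-12} and \ref{4-pt}, I would use the central-vertex relations \eqref{8'**}--\eqref{1=1'-(3'32'2)} to force each primed generator to coincide, modulo the squares, with its unprimed partner ($\G_2'=\G_2$, $\G_3'=\G_3$, $\G_5'=\G_5$, $\G_7'=\G_7$, and then $\G_4'=\G_4$, $\G_6'=\G_6$), leaving the eight involutions $\G_1,\dots,\G_8$. Since the incidence graph of Figure~\ref{(2,2)} has eight vertices and eight edges, it carries a single independent cycle, which yields one cyclic relation expressing one of these generators in terms of the others. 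I would then check that, together with the parasitic commutations and the newly imposed relation $c=e$, the eight involutions satisfy precisely the braid and commutation relations dictated by $\rho$ plus this one cyclic relation, so that $\bar G/\langle\!\langle c\rangle\!\rangle$ is a quotient of $S_8$; being also a surjection onto $S_8$, it is $S_8$.

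The main obstacle is exactly this collapse and Coxeter verification inside $\bar G/\langle\!\langle c\rangle\!\rangle$. The central-vertex relations are long words in the conjugates of $\G_2,\G_3,\G_5,\G_7$ and their primes, and the whole content of the theorem is that one of the commutations required among the eight surviving generators \emph{fails} in $\bar G$ itself and is restored only after imposing $c=e$. Isolating which commutation this is --- tracing it back through the regeneration at the central vertex $e$ to the pair of sheets joined by $\G_4\G_5\G_4$ and by $\G_2$ --- and confirming that the cyclic and three-edges-at-a-vertex configurations produce no further independent kernel element (in the sense of \cite{RTV}) is where the real work lies.
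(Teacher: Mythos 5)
Your reformulation of the statement as $\bar{G}/\langle\langle c\rangle\rangle\cong S_8$ (where $\bar{G}=\pi_1(\C\P^2-\bar{S})/\langle\G_i^2\rangle$ and $c=[\G_2,\G_4\G_5\G_4]$) is valid, and the easy inclusion $\langle\langle c\rangle\rangle\subseteq\Ker\rho$ is handled correctly. The genuine gap is the inference carrying all the weight: you claim that once the eight surviving involutions are checked to satisfy ``the braid and commutation relations dictated by $\rho$ plus this one cyclic relation,'' it follows that $\bar{G}/\langle\langle c\rangle\rangle$ is a quotient of $S_8$. That inference is false, and false in precisely the direction that matters. A group generated by involutions satisfying the braid/commutation relations prescribed by an assignment of transpositions is a Coxeter cover of $S_8$ in the sense of \cite{RTV}, and such covers are in general strictly larger than $S_8$: by \cite[Theorem 2.3]{RTV} the kernel of the natural map to $S_8$ is normally generated by elements attached to cycles of the underlying graph and to vertices where three or more edges meet. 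The element $c$ itself is exactly such a triple-vertex kernel element, nontrivial in the cover. Worse, your argument proves too much: the generators of $\bar{G}$ \emph{already} satisfy all the braid/commutation relations and the cyclic relation $\G_2\G_1\G_3\G_1\G_2=\G_7\G_8\G_5\G_8\G_7$ (derived from (\ref{3,5'*})--(\ref{3',5**})) \emph{before} $c$ is killed, so the same reasoning applied to $\bar{G}$ would yield $\bar{G}\cong S_8$ and hence a trivial Galois fundamental group, contradicting the theorem (and the Remark citing \cite{lidtke}). What is actually required is the Coxeter-cover analysis itself: derive the supplementary relations ($[\G_5,\G_7]=[\G_2,\G_3]=e$, $\langle\G_2,\G_5\rangle=\langle\G_3,\G_6\rangle=\langle\G_3,\G_7\rangle=e$), use the cyclic relation to eliminate one generator, and then invoke \cite[Theorem 2.3]{RTV} on the resulting diagram to see that the kernel is normally generated by the single surviving triple-vertex element. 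You explicitly defer this to ``where the real work lies,'' so the proposal names the hard step but supplies no valid mechanism for it; as structured, the conclusion ``quotient of $S_8$'' is either a non sequitur or a restatement of the theorem being proved.

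A second, smaller gap: the identification $\G_i'=\G_i$ modulo squares is not routine in this case and does not follow ``by the pattern'' of the $(1,2)$ and $4$-point computations. The paper needs a dedicated group-theoretic lemma (if $\langle x,y\rangle=\langle x,y'\rangle=\langle x,y^{-1}y'y\rangle=e$, $[x,y'y]=e$, and $x,y,y'$ are involutions, then $y=y'$), applied repeatedly in a forced order ($\G_6',\G_7',\G_5',\G_4',\G_2'$), and the final identification $\G_3'=\G_3$ comes not from the central vertex at all but from the projective relation (\ref{projrel}) after all the others are known. Your plan attributes the entire collapse to the central-vertex relations (\ref{8'**})--(\ref{1=1'-(3'32'2)}), which by itself would leave $\G_3'$ unidentified.
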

\begin{proof}

The proof will use the following key lemma repeatedly.

\begin{lemma}\label{keylem}
Let $G$ be a group containing $x,$ $y$ and $y'$ such that $\langle x, y \rangle = \langle x, y' \rangle =
\langle x, y^{-1}y'y \rangle = e,$ \ \ $[x, y'y]=e$ \ \ $x^2=y^2=y'^2=e$.   Then $y=y'$.
\end{lemma}

\noindent {\bf Proof of Lemma \ref{keylem}.} Since $\langle x, y\rangle =e,$ $x^{-1}yx=yxy^{-1}.$  Since
$\langle x, y'\rangle =e,$ $x^{-1}y'x=y'xy'^{-1}.$ Since $[x,y'y]=e,$ it follows that
$(x^{-1}y'x)(x^{-1}yx)=x^{-1}y'yx=y'y.$  On the other hand, $(x^{-1}y'x)(x^{-1}yx)=y'xy'^{-1}yxy^{-1},$ so
$y'xy'^{-1}yxy^{-1}=y'y$.

Since $y^2=1,$ $y^{-1}=y.$  Hence $xy'^{-1}yx=e,$ since $x=x^{-1}.$

Since $y'^2=1,$ $y'^{-1}=y'.$  Hence $e=x^{-1}y'^{-1}yx=x^{-1}y'yx=y'y,$ so $y'=y^{-1}=y.$ \hspace{1cm} $\Box$

Using the lemma, we continue the proof that the fundamental group of the Galois cover, the kernel of the map
$G\rightarrow S_8$ is trivial modulo the subgroup normally generated by the $\G_i^2$, by showing that each
$\G_i$ is isomorphic to $\G_i'$ modulo the squares.

Starting from the fact that $\G_1'=\G_1$ and $\G_8'=\G_8,$ we can substitute the relation (\ref{4d}) into
(\ref{[4*-(5'5),6*]}) to obtain
$$[\G_4,\G_6]=[\G_4,\G_6']=e.$$

>From the relations (\ref{6d}) and (\ref{<2*,6>}), we get that $\G_6' = \G_4\G_6\G_4^{-1}.$  Since $\G_4$
commutes with $\G_6,$ we get $\G_6'=\G_6.$

>From the relation (\ref{7d}), using the fact that $\G_6'=\G_6,$ it follows that $[\G_6, \G_7'\G_7]=e,$ so by
(\ref{<6,7*>}), we get that $\G_7'=\G_7$ using Lemma \ref{keylem}.

By (\ref{8'**}), since $\G_8'=\G_8,$ and $\G_7'=\G_7,$ modulo $\G_8^2$ and $\G_7^2$ we get
$\G_8=\G_5^{-1}\G_5'^{-1}\G_8\G_5'\G_5$. \\ Therefore,  $[\G_8,\G_5'\G_5]=e.$

By (\ref{<87'76'6,5*>}), substituting $\G_6=\G_6'$ and $\G_7=\G_7',$ we conclude that
\begin{equation}\label{<8,5*>}
\langle \G_8,c\rangle=e,
\end{equation}
where $c=\G_5,$ $\G_5'$ or $\G_5^{-1}\G_5'\G_5.$  Thus, using Lemma \ref{keylem} again, we conclude that
$\G_5=\G_5'.$

By substituting $\G_5=\G_5'$ into (\ref{4d}) and annihilating the squares, it follows that $\G_4=\G_4'.$

Consider the relation (\ref{4b}), and substitute $\G_4=\G_4'$ to get $[\G_4, \G_2'\G_2]=e.$  Thus, by
(\ref{<2*,4>}) and Lemma \ref{keylem}, it follows that $\G_2=\G_2'.$

We now consider the projective relation (\ref{projrel}).  Since $\G_i=\G_i'$ for all $i \neq 3$ and $\G_i^2=e$
for all $i,$ we conclude that $\G_3=\G_3'.$  Hence $\G_i=\G_i'$ for all $i.$

The defining relations (\ref{4b})-(\ref{[4*-(7'76'65'5),8*]}) reduce to
\begin{eqnarray*}
[\G_1,\G_4]=[\G_1,\G_5] = [\G_1,\G_6] = [\G_1,\G_7]=[\G_1,\G_8] = [\G_2, \G_6] = [\G_2,\G_6]= [\G_2,\G_8]\\
=[\G_3,\G_4]=[\G_3,\G_8] = [\G_4,\G_6] = [\G_4,\G_7] =[\G_4,\G_8]=[\G_5,\G_6] = [\G_6,\G_8] = e,
\end{eqnarray*}

\begin{eqnarray*}
\langle \G_1,\G_2 \rangle = \langle \G_1,\G_3 \rangle = \langle \G_1\G_2\G_1^{-1}, \G_3 \rangle = \langle
\G_2,\G_4 \rangle =\langle \G_4,\G_5 \rangle \\ = \langle \G_5,\G_8 \rangle = \langle \G_6,\G_7 \rangle =
\langle\G_7,\G_8 \rangle =\langle \G_5, \G_7\G_8\G_7^{-1}\rangle = e.
\end{eqnarray*}

>From (\ref{3,5'*}) and (\ref{3',5*}), (\ref{3,5'**}) and (\ref{3',5**}), substituting $\G_i$ for $\G_i',$ we get
$$\G_1\G_2\G_1\G_3\G_1\G_2\G_1= \G_8\G_7\G_8\G_5\G_8\G_7\G_8.$$ Hence,
$\G_2\G_1\G_3\G_1\G_2=\G_7\G_8\G_5\G_8\G_7,$ since $[\G_1, \G_7\G_8\G_5\G_8\G_7]=[\G_8,
\G_1\G_2\G_1\G_3\G_1\G_2\G_1]=e$.

Since $[\G_i,\G_8]=1$ for $1\leq i \leq 3,$ we have
\begin{eqnarray*}
e=[\G_1\G_2\G_1\G_3\G_1\G_2\G_1,\G_8] &=& [\G_8\G_7\G_7\G_5\G_8\G_7\G_8, \G_8]\\
=[\G_7\G_8\G_5\G_8\G_7, \G_8] &=& [\G_8\G_5\G_8, \G_7\G_8\G_7]\\
= [\G_8\G_5\G_8,\G_8\G_7\G_8] &=& [\G_5,\G_7].
\end{eqnarray*}

In the same way,
\begin{eqnarray*}
e=[\G_8\G_7\G_8\G_8\G_8\G_7\G_8,\G_1] &=& [\G_1\G_2\G_1\G_3\G_1\G_2\G_1, \G_1]\\
=[\G_2\G_1\G_3\G_1\G_2, \G_1] &=& [\G_1\G_3\G_1, \G_2\G_1\G_2]\\
= [\G_1\G_3\G_1,\G_1\G_2\G_1] &=& [\G_3,\G_2].
\end{eqnarray*}

>From $\G_2\G_1\G_3\G_1\G_2=\G_7\G_8\G_5\G_8\G_7$,  we get
$$\G_3=\G_1\G_2\G_7\G_8\G_5\G_8\G_2\G_1 \ \ \ \mbox{and}\ \ \ \G_5=\G_8\G_7\G_2\G_1\G_3\G_1\G_2\G_7\G_8.$$

Thus, \begin{eqnarray*} \langle \G_3,\G_6\rangle = \langle \G_1\G_2\G_7\G_8\G_5\G_7\G_2\G_1, \G_6\rangle =
\langle \G_2,\G_6\rangle =e,\\
\langle \G_2,\G_5 \rangle = \langle \G_2,\G_8\G_7\G_2\G_1\G_3\G_1\G_2\G_7\G_8\rangle = \langle \G_2,
\G_1\G_3\G_1 \rangle = \langle \G_2, \G_3\G_1\G_3\rangle =e,\\
\langle \G_3,\G_7 \rangle = \langle\G_1\G_2\G_6\G_8\G_5\G_8\G_2\G_2\G_1, \G_7\rangle = \langle \G_8,\G_7 \rangle
=e.
\end{eqnarray*}

Hence we obtain the Coxeter group with Dynkin diagram given in
Figure \ref{RTV1}, which, by \cite[Theorem 2.3, p.4]{RTV}, has
dual diagram given in Figure \ref{RTV2}, and can be mapped to
$S_8,$ such that the kernel is normally generated by
$[\G_1,\G_4\G_5\G_4].$
\end{proof}
\begin{figure}[ht]
\epsfysize=4.5cm 
\begin{minipage}{\textwidth}
\begin{center}
\epsfbox{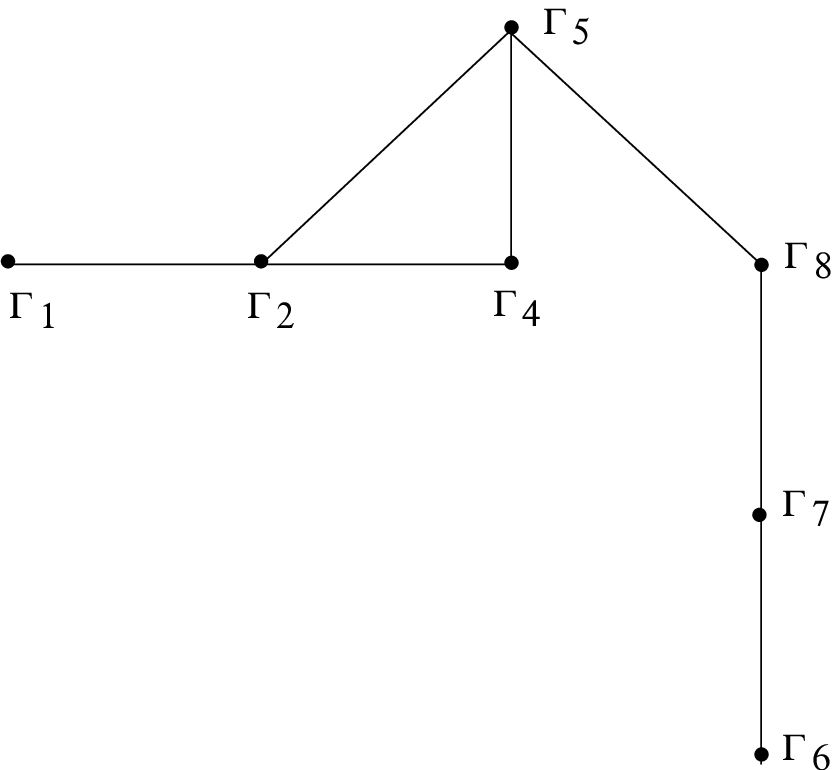}
\end{center}
\end{minipage}
\caption{The Coxeter diagram}\label{RTV1}
\end{figure}

\begin{figure}[ht]
\epsfysize=4cm 
\begin{minipage}{\textwidth}
\begin{center}
\epsfbox{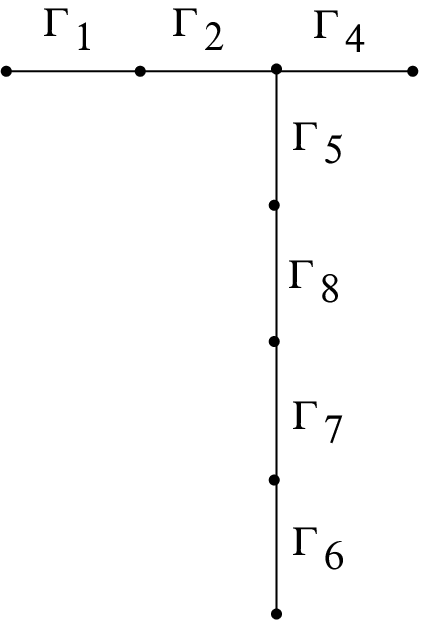}
\end{center}
\end{minipage}
\caption{The dual diagram}\label{RTV2}
\end{figure}

\begin{remark}
This result can also be deduced from \cite{lidtke}.
\end{remark}

\subsection{The case ($a=1$, $b=n$)} \label{F0-1n}

We now provide a more detailed proof of the following theorem (which appeared as an unproved corollary in
\cite{Ogata}):
\begin{theorem} The Galois cover of $(1,n)$-embedding of $\C\P^1 \times \C\P^1$ is simply connected,
for all $n.$
\end{theorem}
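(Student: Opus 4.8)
The plan is to follow the three-step method used throughout the paper and, exactly as in the $(1,2)$ computation of Section~\ref{F0-12} and the $(2,2)$ computation of Section~\ref{F0-22}, to reduce the problem to showing that the map $G/\langle \G_i^2\rangle \to S_{2n}$ coming from the exact sequence (\ref{M-T}) is an isomorphism, where $G=\pi_1(\C\P^2-\bar S)$ and the degree of the $(1,n)$ embedding is $2n$. Since this map is always surjective, it suffices to prove its kernel is trivial, and that kernel is precisely $\pi_1(\Gal{X})$. First I would fix the degeneration of the $(1,n)$ embedding to a strip of $2n$ planes, whose branch curve $S_0$ is a chain of $2n-1$ lines meeting at consecutive vertices, the evident generalization of the three-line arrangement of Figure~\ref{(1,2)}. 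Regenerating each vertex reproduces locally only the two types already computed: the extreme vertices regenerate to smooth conics (trivial braids $Z_{i,i'}$, giving relations $\G_i=\G_i'$ for the outer lines), while each interior vertex is a line tangent to a conic of the type analyzed at Vertex~5 in Figure~\ref{(12v5)}, giving a triple-point braid relation $\langle \G_i',\G_{i+1}\rangle=\langle \G_i',\G_{i+1}'\rangle=\langle \G_i',\G_{i+1}^{-1}\G_{i+1}'\G_{i+1}\rangle=e$ together with one conjugation relation expressing a neighbouring generator. The parasitic intersections between non-adjacent lines contribute commutators $[\,\cdot\,,\,\cdot\,]=e$, and there is the single projective relation $\prod_i \G_i'\G_i=e$. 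Applying the van Kampen theorem then yields a presentation of $G$ on the $2(2n-1)$ generators $\G_i,\G_i'$.

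The heart of the argument is the reduction modulo squares. Working in $G/\langle \G_i^2\rangle$, I would show by induction along the chain that $\G_i=\G_i'$ for every $i$. The base of the induction is furnished directly by the extreme conic vertices. For the inductive step, the triple-point relations at an interior vertex supply the three braid identities $\langle x,y\rangle=\langle x,y'\rangle=\langle x,y^{-1}y'y\rangle=e$ required by Lemma~\ref{keylem}, with $x$ a previously-identified generator and $y,y'$ the two half-twists $\G_{i+1},\G_{i+1}'$, while the appropriate parasitic commutator supplies the remaining hypothesis $[x,y'y]=e$. Together with $x^2=y^2=y'^2=e$, Lemma~\ref{keylem} then yields $\G_{i+1}=\G_{i+1}'$, and propagating this along the chain (using the projective relation to pin down the one generator not reached by a direct identification) gives $\G_i=\G_i'$ for all $i$, exactly as in the $(2,2)$ case.

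Once all primed and unprimed generators are identified, the presentation collapses to one on the $2n-1$ generators $\G_1,\dots,\G_{2n-1}$ subject only to $\G_i^2=e$, the braid relations $\langle \G_i,\G_{i+1}\rangle=e$ along the chain, and the commutators $[\G_i,\G_j]=e$ for $|i-j|\ge 2$. Under the map of (\ref{M-T}) the generator $\G_i$ goes to the adjacent transposition $(i,i+1)$, and these three families of relations are precisely the Coxeter relations of type $A_{2n-1}$; hence $G/\langle \G_i^2\rangle$ is a quotient of the Coxeter group of type $A_{2n-1}$, which is $S_{2n}$. Since the map to $S_{2n}$ is onto, it must be an isomorphism, so its kernel, the fundamental group of the Galois cover, is trivial. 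Equivalently, the resulting Coxeter diagram is a path (a tree), and the criterion of \cite[Theorem 2.3]{RTV} applied to this diagram gives trivial kernel directly.

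I expect the main obstacle to be the bookkeeping rather than any single hard idea: writing down the braid monodromy factorization and the complete list of van Kampen relations uniformly in $n$, and, most delicately, verifying at each step of the inductive identification that the parasitic relations really do provide the commutator $[x,y'y]=e$ needed to invoke Lemma~\ref{keylem}, so that the reduction propagates cleanly across the whole chain. A secondary point requiring care is confirming that after the identification no relation forces an additional collapse beyond the Coxeter relations, so that the reduced diagram is exactly the type-$A$ path and the identification with $S_{2n}$ is a genuine isomorphism.
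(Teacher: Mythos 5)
Your overall plan coincides with the paper's proof: degenerate to the chain of $2n$ planes, apply van Kampen to the regenerated vertex braids and parasitic braids, and then work modulo the $\G_i^2$ to show inductively along the chain that $\G_i=\G_i'$ for all $i$, which reduces $G/\langle \G_i^2\rangle$ to the type-$A_{2n-1}$ Coxeter presentation, i.e.\ to $S_{2n}$, so that the kernel $\pi_1(\Gal{X})$ is trivial. The only stylistic difference is that the paper carries out the inductive identification by a direct manipulation of the braid relations at each vertex (it reserves Lemma \ref{keylem} for the $(2,2)$ case in the appendix), whereas you invoke Lemma \ref{keylem}; the two are interchangeable, since the proof of that lemma is exactly this manipulation. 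Also, unlike the $(2,2)$ case, the projective relation is not actually needed to reach any generator: the upper and lower vertices alternate along the chain, so the identifications $\G_1=\G_1'\Rightarrow\G_2=\G_2'\Rightarrow\G_3=\G_3'\Rightarrow\cdots$ cover every index directly.

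There is, however, one concrete misstep, and it occurs exactly at the point you yourself flag as the most delicate. The hypothesis $[x,y'y]=e$ of Lemma \ref{keylem} cannot be supplied by a parasitic relation: parasitic intersections occur only between lines that are \emph{disjoint} in the degenerated arrangement, so there is no parasitic commutator involving an adjacent pair such as $\G_{2j-1}'$ and $\G_{2j},\G_{2j}'$. The needed commutator comes instead from the vertex's own conjugation relation: at an upper vertex one has $\G_{2j-1}=\G_{2j}'\G_{2j}\G_{2j-1}'\G_{2j}^{-1}\G_{2j}'^{-1}$, and once the inductive hypothesis $\G_{2j-1}=\G_{2j-1}'$ (mod squares) is substituted, this becomes precisely $[\G_{2j-1}',\,\G_{2j}'\G_{2j}]=e$; together with the three braid relations at that vertex and involutivity mod squares, Lemma \ref{keylem} then yields $\G_{2j}=\G_{2j}'$. (At the lower vertices no lemma is needed at all: their conjugation relation collapses directly, since $\G_{2i}'\G_{2i}\equiv e$ mod squares once $\G_{2i}=\G_{2i}'$ is known.) With this correction, which uses only relations you have already listed, your induction propagates exactly as in the paper, and the remainder of your argument (Coxeter presentation of type $A_{2n-1}$, surjectivity onto $S_{2n}$, hence isomorphism, hence trivial kernel) is the paper's conclusion.
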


\begin{proof}
We first construct the braid relations.

All the vertices resemble those of the $(1, 2)$ degeneration. The two corners 1 and 2n+2 look like the corner
vertices 1 and 6 of the $(1,2)$ degeneration, and give rise to the following equations:
\begin{tiny}\begin{eqnarray}
{}\G_1&=&\G_1',\label{1}\\
\G_{2n-1} &=& \G_{2n-1}'.\label{2n-1}
\end{eqnarray}\end{tiny}
The lower vertices look like vertex 2 of the $(1,2)$ degeneration, and give rise to the following equations:
\begin{tiny}\begin{eqnarray}
{}\langle \G_{2i}, \G_{2i+1}\rangle = \langle \G_{2i}',\G_{2i-1}\rangle &=& \langle \G_{2i}^{-1}\G_{2i}'\G_{2i},\G_{2i+1}\rangle = e \label{ibraids}\\
{}\G_{2i-1}'&=&\G_{2i+1}\G_{2i}'\G_{2i}\G_{2i+1}\G_{2i}^{-1}\G_{2i}'^{-1}\G_{2i+1}^{-1},\label{i-ident}
\end{eqnarray}\end{tiny}
for $i=1, \cdots, n-1.$ The upper vertices appear like vertex 5 of the (1,2) degeneration, and give rise to the
following equations: \begin{tiny}\begin{eqnarray}
{}\langle \G_{2j-1}', \G_{2j}\rangle = \langle \G_{2j-1}', \G_{2j}'\rangle &=& \langle \G_{2j-1}',\G_{2j}^{-1}\G_{2j}'\G_{2j}\rangle = e \label{jbraids}\\
{}\G_{2j-1}'&=&\G_{2j}'\G_{2j}\G_{2j-1}'\G_{2j}^{-1}\G_{2j}'^{-1},\label{j-ident}
\end{eqnarray}\end{tiny}
for $j=1, \cdots, n-1.$ The ``parasitic'' intersections give rise to the following equations:
\begin{tiny}\begin{eqnarray} {}[a, b]&=&e,  \label{kl}
\end{eqnarray}\end{tiny}
where $a=\G_l$ or $\G_l'$ and $b=\G_{2k+1}, \; \G_{2k+1}',$ or
$\G_{2k-1}^{-1}\G_{2k-1}'^{-1}\G_{2k}\G_{2k-1}'\G_{2k-1},$ for $k=1, \cdots, n-1,$ $l=1, \cdots, 2k-1.$ Finally,
we also have the projective relation \begin{tiny}\begin{eqnarray} \G_{2n-1}'\G_{2n-1}\dots \G_1'\G_1 &=& e.
\label{proj}
\end{eqnarray}\end{tiny}

Now we simplify these relations.

>From (\ref{jbraids}) ($j=1$), we get $\G_1=\G_2'\G_2\G_1'\G_2^{-1}\G_2'^{-1}$; hence
$\G_2'^{-1}\G_1\G_2'=\G_2\G_1'\G_2^{-1}$. From (\ref{1}), we can rewrite this as
$\G_2'^{-1}\G_1'\G_2'=\G_2\G_1'\G_2^{-1}$.  From (\ref{j-ident}) ($j=1$), we have
\begin{equation*}
\langle \G_1',\G_2\rangle = \langle \G_1',\G_2'\rangle = e \ \ \ \mbox{and} \ \ \  \G_2'\G_1'\G_2 =
\G_1'\G_2'\G_1'^{-1}, \ \G_2\G_1'\G_2^{-1} = \G_1'\G_2\G_1'.
\end{equation*}
Thus, $\G_1'\G_2'\G_1'^{-1} = \G_1'\G_2\G_1'$. Modulo the $\G_i^2,$ this means that $\G_2=\G_2'$.

Likewise, from (\ref{ibraids}) and (\ref{i-ident}) ($i=2$), we get
$\G_3'=\G_3\G_2'\G_2\G_3\G_2^{-1}\G_2'^{-1}\G_3^{-1}$.

Now since $\G_2=\G_2'$ and $\G_2^2=e,$ we find that modulo the squares, $\G_3'=\G_3\G_3\G_3^{-1}=\G_3$.

By induction, we conclude that $\G_k'=\G_k$ modulo the squares, for $1 \leq k \leq 2n-1.$
Hence, $G / \langle \G_k^2 \rangle \cong S_{2n}.$   Hence, the kernel of the map $G / \langle \G_k^2 \rangle \rightarrow S_{2n}$ is trivial.
\end{proof}

\end{document}